\newtheorem{thm}{Theorem}[section]
\newtheorem{cor}[thm]{Corollary}
\newtheorem{theorem}[thm]{Theorem}
\newtheorem{prop}[thm]{Proposition}
\newtheorem{rem}[thm]{Remark}
\newcommand{\commentout}[1]{}
\newcommand{\nwc}{\newcommand}
\nwc{\nwt}{\newtheorem}
\nwc{\bR}{\mb R}
\nwc{\bH}{{\mb H}}
\nwc{\bxp}{{{\mathbf x}}}
\nwc{\bap}{{{\mathbf y}}}
\nwc{\bPhi}{\mathbf{\Phi}}
\nwc{\bPsi}{\mathbf{\Psi}}
\nwc{\bh}{\mathbf h}
\nwc{\bI}{\mathbf I}
\nwc{\bP}{\mathbf P}
\nwc{\bd}{\mathbf{d}}
\nwc{\bX}{\mathbf X}
\nwc{\om}{\omega}
\nwc{\xp}{{x^{\perp}}}
\nwc{\yp}{{y^{\perp}}}
\nwc{\ba}{{\mb a}}
\nwc{\bal}{\begin{align}}
\nwc{\ben}{\begin{equation*}}
\nwc{\beqq}{\begin{equation}}
\nwc{\bea}{\begin{eqnarray}}
\nwc{\beq}{\begin{eqnarray}}
\nwc{\bean}{\begin{eqnarray*}}
\nwc{\beqn}{\begin{eqnarray*}}
\nwc{\beqast}{\begin{eqnarray*}}
\nwc{\eal}{\end{align}}
\nwc{\een}{\end{equation*}}
\nwc{\eeqq}{\end{equation}}
\nwc{\eea}{\end{eqnarray}}
\nwc{\eeq}{\end{eqnarray}}
\nwc{\eean}{\end{eqnarray*}}
\nwc{\eeqn}{\end{eqnarray*}}
\nwc{\eeqast}{\end{eqnarray*}}
\nwc{\vep}{\varepsilon}
\nwc{\ep}{\epsilon}
\nwc{\ept}{\epsilon}
\nwc{\vrho}{\varrho}
\nwc{\orho}{\bar\varrho}
\nwc{\ou}{\bar u}
\nwc{\vpsi}{\varpsi}
\nwc{\lamb}{\lambda}
\nwc{\Var}{{\rm Var}}
\nwc{\nn}{\nonumber}
\nwc{\mf}{\mathbf}
\nwc{\mb}{\mathbf}
\nwc{\ml}{\mathcal}
\nwc{\IA}{\mathbb{A}} 
\nwc{\bo}{\mathbf o}
\nwc{\IB}{\mathbb{B}}
\nwc{\IC}{\mathbb{C}} 
\nwc{\ID}{\mathbb{D}} 
\nwc{\IM}{\mathbb{M}} 
\nwc{\II}{\mathbb{I}} 
\nwc{\IE}{\mathbb{E}} 
\nwc{\IF}{\mathbb{F}} 
\nwc{\IG}{\mathbb{G}} 
\nwc{\IN}{\mathbb{N}} 
\nwc{\IQ}{\mathbb{Q}} 
\nwc{\IR}{\mathbb{R}} 
\nwc{\IT}{\mathbb{T}} 
\nwc{\IZ}{\mathbb{Z}} 
\nwc{\cE}{{\ml E}}
\nwc{\cP}{{\ml P}}
\nwc{\cQ}{{\ml Q}}
\nwc{\cL}{{\ml L}}
\nwc{\cX}{{\ml X}}
\nwc{\cW}{{\ml W}}
\nwc{\cZ}{{\ml Z}}
\nwc{\cR}{{\ml R}}
\nwc{\cV}{{\ml V}}
\nwc{\cT}{{\ml T}}
\nwc{\crV}{{\ml L}_{(\delta,\rho)}}
\nwc{\cC}{{\ml C}}
\nwc{\cO}{{\ml O}}
\nwc{\cA}{{\ml A}}
\nwc{\cK}{{\ml K}}
\nwc{\cB}{{\ml B}}
\nwc{\cD}{{\ml D}}
\nwc{\cF}{{\ml F}}
\nwc{\cS}{{\ml S}}
\nwc{\cM}{{\ml M}}
\nwc{\cG}{{\ml G}}
\nwc{\cH}{{\ml H}}
\nwc{\bk}{{\mb k}}
\nwc{\bn}{{\mb n}}
\nwc{\cbz}{\overline{\cB}_z}
\nwc{\supp}{{\hbox{supp}}}
\nwc{\fR}{\Re}
\nwc{\bY}{\mathbf Y}
\nwc{\pft}{\cF^{-1}_2}
\nwc{\bU}{{\mb U}}
\nwc{\bG}{{\mb G}}
\nwc{\bg}{\mathbf{g}}
\nwc{\mbf}{\mathbf{f}}
\nwc{\mbe}{\mathbf{e}}
\nwc{\be}{\mathbf{e}}
\nwc{\Om}{\Omega}
\nwc{\ind}{\operatorname{I}}
\nwc{\mbx}{\mathbf{f}}
\nwc{\bb}{\mathbf{g}}
\nwc{\xmax}{f_{\rm max}}
\nwc{\xmin}{f_{\rm min}}
\nwc{\suppx}{\hbox{\rm supp} (\mbf)}
\nwc{\by}{\mathbf{h}}
\nwc{\bZ}{\mathbf{Z}}
\nwc{\bF}{\mathbf{F}}
\nwc{\bE}{\mathbf{E}}
\nwc{\bV}{\mathbf{V}}
\nwc{\cI}{\IZ^2_N}
\nwc{\chis}{{\chi^{\rm s}}}
\nwc{\chii}{{\chi^{\rm i}}}
\nwc{\pdfi}{{f^{\rm i}}}
\nwc{\pdfs}{{f^{\rm s}}}
\nwc{\pdfii}{{f_1^{\rm i}}}
\nwc{\pdfsi}{{f_1^{\rm s}}}
\nwc{\thetatil}{{\tilde\theta}}
\nwc{\red}{\color{red}}
\nwc{\prox}{\hbox{prox}}
\nwc{\sloc}{J_{\rm f}}
\nwc{\bu}{\xi}
\nwc{\bv}{\eta}
\nwc{\cU}{\mathcal{U}}
\nwc{\cN}{\mathcal{N}}
\nwc{\bN}{\mathbf{N}}
\nwc{\mbm}{\mathbf{m}}
\nwc{\bw}{\mathbf{w}}
\nwc{\im}{i}
\nwc{\bom}{\mathbf{w}}
\nwc{\bt}{\mathbf{t}}
\nwc{\z}{y}
\nwc{\cY}{\mathcal{Y}}
\DeclareMathOperator{\diag}{diag}
\begin{document}

\title[Local saddles of RAAR algorithms]{
Local saddles  of  relaxed averaged alternating reflections algorithms on phase retrieval
}

\author{Pengwen Chen}

\address{Applied mathematics, National Chung Hsing University, Taiwan}
\ead{pengwen@nchu.edu.tw}
\vspace{10pt}
\begin{indented}
\item[]
\end{indented}

\begin{abstract}
Phase retrieval can be expressed as  a non-convex constrained optimization problem to identify one phase minimizer one a torus.   Many iterative transform techniques have been proposed to identify the minimizer, e.g., relaxed averaged alternating reflections(RAAR) algorithms. In this paper, we present one optimization viewpoint on the RAAR algorithm.
    RAAR algorithm is  one alternating direction method of multipliers(ADMM) with one penalty parameter.
    Pairing  with  multipliers (dual vectors), phase vectors on the primal space are lifted to 
   higher dimensional vectors, RAAR algorithm  is one continuation algorithm, which  searches for  local saddles  in  the primal-dual space.   The dual iteration approximates one gradient ascent flow, which   drives the corresponding   local minimizers    in  a positive-definite   Hessian region. 
            Altering  penalty  parameters,   the RAAR  avoids the stagnation of   these corresponding local  minimizers in the primal space and thus
       screens out  many stationary points corresponding to  non-local minimizers. 
\end{abstract}

%
\vspace{2pc}
\noindent{\it Keywords}:  Phase retrieval, relaxed averaged alternating reflections, alternating direction method of multipliers, Nash equilibrium, 
  local saddles

\section{Introduction}Phase retrieval has recently attracted  attentions  in the mathematics community (see one review \cite{Shechtman2015} and references therein).  The problem of phase retrieval is motivated by  the  inability  of  photo detectors  to  directly  measure  the  phase  of  an  electromagnetic  wave  at  frequencies  of THz  (terahertz)  and  higher. 
The problem of phase retrieval aims to reconstruct
an unknown object $x_0\in \IC^n$ from its magnitude measurement data $b=|A^* x_0|$, where 
$A\in \IC^{n\times N} $ represents  some  isometric matrix and  $A^*$ represents the Hermitian adjoint of $A$. Introduce one
 non-convex $N$-dimensional torus associated with  its normalized torus  $ \cZ:=\left\{z\in \IC^N: |z|=b\right\},\; \cU:=\left\{u\in \IC^N: |u|=1\right\}. $
The whole problem is equivalent to  reconstructing   the missing phase information $u$ and the unknown object $x=x_0$ via solving  the constrained least squares  problem
\beqq\label{main_P}
\min_{x\in \IC^n, |u|=1} \left\{ \|b\odot u-A^* x\|^2: u\in \IC^N\right\}=
\min_{z\in \cZ} \|A_\bot z\|^2,
\eeqq
where 
 $A_\bot\in \IC^{(N-n)\times N}$
is  an isometric matrix  with unitary matrix $[A^*, A_\bot^*]$,  \[
[A^*, A_\bot^*][A^*, A_\bot^*]^*=A_\bot^* A_\bot+A^* A=I.\]
Here, $b\odot u$ represents the component-wise multiplication
between two vectors $b,u$, respectively.
The isometric condition is not very  restrictive in applications, since Fourier transforms are commonly applied in phase retrieval. Even for non-Fourier transforms, we can still obtain equivalent problems via  a QR-factorization, see \cite{Chen2017}.
Let $\cU_*$ denote the set in $\cU$ consisting of all the local minimizers of (\ref{main_P}).
A vector $z_*\in \cZ$ minimizes (\ref{main_P})
is called a global solution.
In the noiseless measurement case, $A_\bot z_*=0$ or $z_*=A^* x_*=b\odot u_*$ for some $u_*\in \cU$ and some $x_*\in \IC^n$.   Numerically,  it is a  nontrivial task  to  obtain   a global  minimizer  on the non-convex torus. 
   The error reduction method is one traditional  method~\cite{Gerchberg1972}, which could  produce a local solution of poor quality   for (\ref{main_P}), if no proper initialization is taken. During last decades, researchers propose various spectral initialization  algorithms  to overcome this challenge\cite{NetrapalliEtAl2013Phase,ChenCandes2015Solving,CandesEtAl2015Phase, NIPS2016_83adc922, Chen2017,Chen2017a, LuoEtAl2018Optimal,LuLi2017Phase,Mondelli2019, duchi2019solving}. 
On the other hand,  phase retrieval can be also tackled by another class of algorithms, including
the well-known  hybrid input-output algorithm(HIO)\cite{Fienup1982, Fienup2013}, the hybrid projection–reflection method\cite{Bauschke2003},  Fourier Douglas-Rachford algorithm (FDR)\cite{CHEN2018665}, alternating direction methods\cite{Wen_2012} and relaxed averaged alternating reflections(RAAR) algorithms\cite{Luke_2004}.  An important feature of these algorithms is the  empirical ability  to  avoid  local  minima  and  converge  to  a  global  mini-mum for noise-free oversampled diffraction patterns.  For instance, 
the empirical study of  FDR indicates the disappearance of  the stagnation at poor local  solutions
under  sufficiently many random masks.
A limit point of FDR  is a global solution in (\ref{main_P}) and  the limit
 point  with  appropriate spectral gap  conditions  reconstructs    the phase retrieval solution~\cite{CHEN2018665}.
Traditional  convergence study on  Douglas-Rachford  splitting algorithm~\cite{Eckstein1992,He2015} heavily relies on the convexity assumption.
        Noise-free measurement is a strict requirement for HIO and FDR,  which motivates the proposal of
  relaxed averaged alternating reflections algorithm~\cite{Luke_2004, Li}. 
Let $\cA,\cB$ denote the sets $Range(A^*)$ and $\cZ$, respectively. Let $P_\cA$ and $P_\cB$ denote the projector on $\cA$ and $\cB$, respectively.  Let $R_\cA, R_\cB$ denote the reflectors corresponding to $\cA,\cB$.    
   With one parameter $\beta\in (0,1)$  relaxing the original feasibility problem (the intersection of $\cA$ and $\cB$), the $\beta$-RAAR algorithm \cite{Luke_2004} is defined as the iterations $\left\{S^k(w): k=1,2,\ldots \right\}$ for some initialization $w\in \IC^N$,
\begin{eqnarray}
S(w)&=& \beta \cdot \frac{1}{2} (R_\cA R_\cB+I) w+(1-\beta) P_\cB w\\
&=& \frac{\beta}{2}\{(2A^*A-I) (2b\odot \frac{w}{|w|}-w)+w\}+(1-\beta )b\odot \frac{w}{|w|}
\\
&=&{\beta} w+(1-2{\beta}) b\odot \frac{w}{|w|}+{\beta} A^* A (2b\odot \frac{w}{|w|}-w).\label{RAART}
\end{eqnarray}
 Fourier Douglas-Rachford  algorithm can be deemed as an extreme case of 
$\beta$-RAAR family with  $\beta=1$.
As RAAR converges to a fixed point $w$, we could retrieve the phase information $u=w/|w|$ for (\ref{main_P}).
Any $u$ in $\cU_*$ yields a fixed point $w$. 
Empirically, RAAR fixed points  can produce local solutions of high quality, if a large value is properly chosen for $\beta$, as reported in~\cite{Wen_2012,Li}. 

In this work, we disclose the relation between RAAR  and the local minimizers $z$ in (\ref{main_P}). As HIO can be reformulated as one  alternating direction method of multipliers in \cite{Wen_2012}, 
we identify RAAR as one   ADMM with \textbf{ penalty parameter} $1/\beta'=(1-\beta)/\beta$ applied to the  constrained optimization problem in~(\ref{main_P}), 
 e.g.,  Theorem.~\ref{ADM_RAAR}. 
This perspective links   $\beta$-RAAR with  a small parameter $\beta$  to  multiplier methods with 
 large penalty  ${\beta'}^{-1}$. 
It is known in optimization that convergence of  a multiplier method
  relies on  
a sufficiently large penalty (e.g., see Prop. 2.7  in \cite{Bertsekas1996}). From this perspective, it is not surprising that the convergence of  RAAR  to its fixed point also requires  a large penalty parameter.
Actually, 
large penalty  has been employed to  ensure various ADMM iterations   converging  to   stationary points~\cite{Hong2016, li2015global, wang2019global}. For instance, ADMM \cite{wang2019global} is applied to solve the  minimization of nonconvex nonsmooth functions. Global convergence to a stationary point can be established, when sufficiently large penalty parameters are used.

 Saddle  plays a fundamental role in the theory and the application of  convex optimization\cite{Bert},  in particular, the convergence of ADMM, e.g., \cite{Boyd}.  For the application  on phase retrieval,
  Sun et al\cite{Sun}  conduct saddle analysis on a quatradic  objective function of  Gaussian measurements.
The geometric analysis  shows that with high probability the global solution is the one local minimizer, when $N/n$ is sufficiently large.
Most of critical points are saddles at actually.
We believe that saddle analysis    is also one key ingredient in  explaining    the  avoidance of  undesired critical points for the Lagrangian of RAAR. 
To some extent,  promising empirical  performance of non-convex ADMM   conveys the impression that saddles   exist in the Lagrangian function, which is  not evident   in the context of phase retrieval. This is a motivation of the current study. 
  Recently, researchers have been cognizant of  the importance of   saddle structure in    non-convex optimization research. 
 Analysis of critical points in non-concave-convex problems  leads to many interesting results in various applications. For instance,   Lee et al. used  a dynamical system approach to   show that 
many gradient-descent algorithms almost surely converge  to  local minimizers with random initialization, even though
  they can get stuck at critical points theoretically~\cite{Lee2016,NIPS2017_f79921bb, jin2017escape}. 
  The terminology ``local saddle" is  a crucial concept in understanding the min-max algorithm   employed  in modern machine learning research, e.g.,  gradient descent-ascent algorithms in
generative adversarial networks (GANs)\cite{goodfellow2020generative} and multi-agent reinforcement learning\cite{omidshafiei2017deep}. With proper Hessian adjustment,  
\cite{Adolphs2018} and \cite{Daskalakis2018} proposed
novel saddle algorithms to  escape undesired critical points and to reach  local saddles of min-max problems almost surely with random initialization. 
Jin et al.~\cite{Jin2020} proposed one non-symmetric definition of local saddles to address one basic question, `` what is a proper definition of local optima for the local saddle?" Later,  Dai and Zhang gave  saddle analysis on the constraint minimization problems~\cite{Dai2020}.

 Our study starts with 
one characterization of all the fixed point of RAAR algorithms in Theorem~\ref{fixed1}. These fixed points are critical points of (\ref{main_P}). By varying $\beta$, some of the fixed points  become ``local saddles" of a concave-nonconvex function $F$,  \beqq\label{eq_3_}
\max_\lambda \min_z \left\{F(z,\lambda; \beta):=\left(\frac{\beta}{2} \|A_\bot(z-\lambda)\|^2-\frac{1}{2} \|\lambda\|^2 \right), \; z\in \cZ, \;  \lambda\in \IC^N\right\}.
\eeqq
  To characterize RAAR iterates, 
we investigate saddles in (\ref{eq_3_}) lying in a   high dimensional primal-dual space. Our study aims to  answer   a few  intuitive  questions,  whether  these local dimensional critical points on $\cZ$ in    the primal space can be  lift  to  local saddles of (\ref{eq_3_}) in a primal-dual space under some spectral gap condition,  and how  the ADMM iterates   avoid or converge to these local saddles under a proper  penalty parameter?  
The line of thought motivates the current study on  local saddles of (\ref{eq_3_}).
Unfortunately, the definition of  local saddles in \cite{Jin2020} can not be employed to analyze the RAAR convergence, since the objective function in phase retrieval shares   phase invariance, see Remark~\ref{alpha-79}.
 
The main  goal of the present work is to establish 
an optimization view to illustrate the convergence of RAAR, and 
show by analysis and numerics,
under the framework for phase retrieval with coded diffraction patterns in ~\cite{Fannjiang_2012}, RAAR has a basin of attraction at a local saddle $(z_*,\lambda_*)$.
For noiseless measurement, $z_*=A^* x_0$ is a strictly local minimizer of  (\ref{main_P}).
In practice,  numerical stagnation of RAAR on noiseless measurements disappears under sufficient large $\beta$ values.
 Specifically, Theorem~\ref{ADM_RAAR} shows that 
 RAAR is actually   one ADMM to solve the constrained problem in (\ref{main_P}). Based on this identification, Theorem~\ref{Main1} show that  each limit of RAAR iterates can be  viewed as a ``local saddle" of $\max\min F$ in (\ref{eq_3_}).

The rest of the paper is organized as follows. In section~\ref{sec:2}, we examine the fixed point condition of RAAR algorithm. By identifying RAAR as ADMM, we  disclose the concave-non-convex function for the dynamics of RAAR, which provides  a continuation viewpoint on RAAR iteration. 
 In section~\ref{sec:3}, we present  one proper definition for local saddles and show
 the existence of local saddles for oversampled coded diffraction patterns.   In section~\ref{sec:4}, we show the convergence of RAAR  to a local saddle  under a sufficiently large parameter.  Last, we provide  experiments to illustrate  the behavior of RAAR, (i)comparison experiments between   RAAR and Douglas Rachford splitting proposed in \cite{fannjiang2020fixed}; (ii) applications of RAAR on coded diffraction patterns.

    \section{RAAR algorithms}\label{sec:2}

    \subsection{Critical points }

   The following gives the first order optimality of the problem in (\ref{main_P}). This is a special case of Prop.~\ref{convex1} with $\lambda=0$. We skip 
    the proof. 
 \begin{prop}  \label{convex0} Let $z_0=b\odot u_0$ be a local minimizer of the problem in (\ref{main_P}). 
Let $ K_{z_0}^\bot:=\Re(\diag(\bar u_0)A_\bot^* A_\bot \diag(u_0))$.
  Then  the first-order optimality condition is
\beqq\label{Ln10}
q_0:={z_0}^{-1}\odot (A_\bot^* A_\bot z_0)\in \IR^N, \eeqq
and the second-order necessary condition is that 
for all $\xi\in \IR^N$,
\beqq\label{Ln20}
\xi^\top (K_{z_0}^\bot-\diag(q_0))\xi\ge 0.
\eeqq
 
 \end{prop}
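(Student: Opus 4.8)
The plan is to trade the constraint for a local chart on the torus. Since $z_0^{-1}$ appears in the statement we must have $b>0$ componentwise, so $|z_{0,k}|=b_k>0$ and the map $\theta\mapsto z(\theta):=z_0\odot e^{\i\theta}$ (componentwise, $\theta\in\IR^N$) is a smooth chart of a neighbourhood of $z_0$ inside $\cZ$. Hence $z_0$ is a local minimizer of $\|A_\bot z\|^2$ on $\cZ$ if and only if $\theta=0$ is an unconstrained local minimizer of the smooth function
\[
f(\theta):=\|A_\bot z(\theta)\|^2=z(\theta)^*Mz(\theta),\qquad M:=A_\bot^*A_\bot\;(\succeq0),
\]
and it then suffices to write out the classical necessary conditions $\nabla f(0)=0$ and $\nabla^2 f(0)\succeq 0$.

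For the gradient, differentiating $z(\theta)_j=z_{0,j}e^{\i\theta_j}$ gives $\partial_k z|_{0}=\i z_{0,k}\be_k$, so, using $M^*=M$,
\[
\partial_k f(0)=2\,\Re\!\bigl(z_0^*M\,\partial_k z|_0\bigr)=2\,\Re\!\bigl(\i z_{0,k}\,\overline{(Mz_0)_k}\bigr)=2\,\Im\!\bigl(\overline{z_{0,k}}(Mz_0)_k\bigr).
\]
Thus $\nabla f(0)=0$ means $\overline{z_{0,k}}(Mz_0)_k\in\IR$ for every $k$; dividing by $|z_{0,k}|^2=b_k^2>0$ this is exactly $q_0=z_0^{-1}\odot(A_\bot^*A_\bot z_0)\in\IR^N$, i.e.\ (\ref{Ln10}). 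Equivalently $(Mz_0)_k=q_{0,k}z_{0,k}$ for all $k$.

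For the Hessian, one more differentiation, together with $\partial_l\partial_k z|_0=-\delta_{kl}z_{0,k}\be_k$, gives
\[
[\nabla^2 f(0)]_{lk}=2\,\Re\!\bigl(\overline{z_{0,l}}z_{0,k}M_{lk}\bigr)-2\,\delta_{kl}\,\Re\!\bigl(\overline{z_{0,k}}(Mz_0)_k\bigr).
\]
Here the first-order condition lets me replace the diagonal term by $2\delta_{kl}q_{0,k}b_k^2$; writing $z_{0,j}=b_j u_{0,j}$ and pulling $b_l,b_k$ out of each entry then yields $\nabla^2 f(0)=2\,\diag(b)\bigl(K_{z_0}^\bot-\diag(q_0)\bigr)\diag(b)$, where $K_{z_0}^\bot=\Re(\diag(\bar u_0)A_\bot^*A_\bot\diag(u_0))$ is real symmetric because $\diag(\bar u_0)A_\bot^*A_\bot\diag(u_0)$ is Hermitian. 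Finally $\nabla^2 f(0)\succeq 0$ reads $\theta^\top\diag(b)(K_{z_0}^\bot-\diag(q_0))\diag(b)\,\theta\ge 0$ for all $\theta\in\IR^N$, and $\xi:=b\odot\theta$ ranges over all of $\IR^N$ since $b>0$, so this is precisely (\ref{Ln20}).

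The computation is routine Lagrange-multiplier / Wirtinger bookkeeping, so there is no deep obstacle; the only care needed is (i) to track real and imaginary parts correctly and exploit the Hermitian symmetry of $A_\bot^*A_\bot$, and (ii) to notice that the first-order condition converts $(Mz_0)_k$ into $q_{0,k}z_{0,k}$, which is exactly what collapses the raw Hessian into $K_{z_0}^\bot-\diag(q_0)$ up to the harmless congruence by $\diag(b)$. One should also record at the outset that $b>0$ componentwise is implicit (otherwise $q_0$ is undefined) and that the chart $\theta\mapsto z_0\odot e^{\i\theta}$ already realises every admissible tangent direction $\i z_0\odot\theta$ at $z_0$, so using it in place of an arbitrary curve on $\cZ$ in the second-order test loses no generality.
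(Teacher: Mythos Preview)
Your proof is correct and follows essentially the same approach as the paper: the paper defers this proposition to the $\lambda=0$ case of Prop.~\ref{convex1}, whose proof parametrizes a neighbourhood of $z_0$ on the torus via $z_0\odot e^{\i\theta}$, Taylor-expands the objective in $\theta$, and reads off the first- and second-order conditions with the substitution $\xi=b\odot\theta$. Your entrywise derivative computation is simply a coordinate version of that same expansion.
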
  
 \begin{rem}
{  Once a local solution $z$ is obtained, the unknown object of phase retrieval in (\ref{main_P}) can be  estimated by   $x=Az$. }  
 On the other hand, 
  using $I=A^*A+A_\bot^* A_\bot$, we can express the first order condition  as 
 \beqq
 u^{-1}\odot (A^*A(b\odot u))=b\odot (1-q_0)\in \IR^N.\eeqq
 Using $\xi=e_i$  canonical vectors of $\IR^N$,  we have a componentwise  lower bound on $1-q_0$ from (\ref{Ln20}):
 $b^{-1}\odot (1-q_0)\ge \|A e_i\|^2\ge 0$. In general, there exists many local minimizers on $\cZ$, satisfying (\ref{Ln10}) and (\ref{Ln20}).

 \end{rem}
  \subsection{Fixed point conditions of RAAR}We begin  with fixed point conditions of $\beta$-RAAR iterations in~(\ref{RAART}). 
   For each $\beta\in (0,1)$,  introduce one auxiliary  parameter $\beta'\in (0,\infty)$ defined by  \beqq\label{beta'}
\beta=\frac{\beta' }{1+\beta'},\; i.e.,  \beta'=\frac{\beta}{1-\beta}.
\eeqq
 We shall  show the reduction in the cardinality of fixed points under  with a  small penalty parameter.
\begin{theorem}\label{fixed1}
Consider the application of the $\beta$-RAAR algorithm on the constrained problem in (\ref{main_P}).
 Write $w\in \IC^N$ in polar form $w=u\odot |w|$. For each $\beta\in (0,1)$, let ${\beta'}={\beta}/(1-{\beta})$ and
  \beqq \label{cueq} c:= (1-\frac{1-{\beta}}{{\beta}}) b+\frac{1-{\beta}}{{\beta}} |w|\in \IR^N.
  \eeqq
 Then $w$ is a fixed  point of $\beta$-RAAR, if and only if 
 $w$ satisfies  the  phase condition
\beqq\label{f3}
 A^*A(b\odot u)=c\odot  u,\; 
\eeqq
 and   the  magnitude condition,
 \beqq\label{absy}
|w|={\beta'} c+ (1-{\beta'})b\ge 0, i.e.,  \; c\ge (1-{\beta'}^{-1}) b.
\eeqq
In particular, for 
 ${\beta} \in [1/2, 1)$, we have $c\ge 0$ from (\ref{absy}). Observe that  the inequality in (\ref{absy}) ensures  the well-defined magnitude vector   $|w|$. Hence, the fixed points are critical points of (\ref{main_P}).
\end{theorem}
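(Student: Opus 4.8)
The plan is to work directly from the fixed-point equation $S(w)=w$ with $S$ written as in (\ref{RAART}), and to split it along the orthogonal decomposition $I=A^*A+A_\bot^*A_\bot$. Abbreviating $P:=A^*A$ and $Q:=A_\bot^*A_\bot$, the structural facts I would use are that $P,Q$ are complementary orthogonal projections with $QP=0$ (which follows from $A_\bot A^*=0$, a consequence of $[A^*,A_\bot^*]$ being unitary), and that $|u|=1$ lets one divide componentwise by $u$. All the computations are elementary Hadamard-product manipulations; the point is to organize them so that the phase half and the magnitude half of the fixed-point condition fall out separately.

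\textbf{Necessity.} Write $w=u\odot|w|$ with $|u|=1$ and $u=w/|w|$. Applying $P$ to $S(w)=w$ and using $P^2=P$ collapses the right-hand side to $Pw=P(b\odot u)$; applying $Q$ and using $QP=0$ gives $(1-\beta)Qw=(1-2\beta)Q(b\odot u)$. Recombining $w=Pw+Qw$ and simplifying $\frac{1-2\beta}{1-\beta}-1=-\beta'$ produces the compact identity $w=(I-\beta'Q)(b\odot u)$. Multiplying this componentwise by $\bar u$ (so that $\bar u\odot b\odot u=b$) gives $|w|=b-\beta'\,\bar u\odot Q(b\odot u)$; since the left side is real, so is $q:=b^{-1}\odot\bar u\odot A_\bot^*A_\bot(b\odot u)$, which is exactly the first-order optimality vector $q_0$ of Prop.~\ref{convex0} with $z_0=b\odot u$ (hence the fixed point is a critical point of (\ref{main_P})), and $|w|=b\odot(1-\beta'q)\ge 0$. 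Substituting this $|w|$ into (\ref{cueq}) gives $c=b\odot(1-q)$; then reading off the component of $b\odot u$ in the range of $A^*$, namely $A^*A(b\odot u)=b\odot u-Q(b\odot u)=u\odot b\odot(1-q)=c\odot u$, yields the phase condition (\ref{f3}), while $|w|=\beta'c+(1-\beta')b$ together with $|w|\ge 0$ is (\ref{absy}).

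\textbf{Sufficiency.} Conversely, assume (\ref{f3}) and (\ref{absy}). From (\ref{absy}), $w:=u\odot|w|=\beta'(c\odot u)+(1-\beta')(b\odot u)=\beta'A^*A(b\odot u)+(1-\beta')(b\odot u)$ by (\ref{f3}); applying $P$, under which $A^*A(b\odot u)$ is fixed, gives $A^*Aw=A^*A(b\odot u)=c\odot u$, hence $A^*A(2b\odot u-w)=c\odot u$. Plugging this into (\ref{RAART}) gives $S(w)=u\odot\big(\beta|w|+(1-2\beta)b+\beta c\big)$, and the scalar identity $1-\beta'=\frac{1-2\beta}{1-\beta}$ shows the bracket equals $|w|$, i.e.\ $S(w)=w$. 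Finally, for $\beta\in[1/2,1)$ one has $\beta'\ge 1$, so $1-\beta'^{-1}\ge 0$ and (\ref{absy}) forces $c\ge 0$.

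\textbf{Anticipated obstacle.} There is no deep difficulty here; the only care needed is bookkeeping between componentwise and matrix products, and the treatment of null components of $|w|$, on which $w/|w|$ in (\ref{RAART}) is defined only by convention. I would handle this by restricting to $|w|$ strictly positive, or by noting that on a zero component the phase of $u$ is free and the scalar identities above are vacuous there, and I would state this explicitly rather than leave it implicit; the genuine content of (\ref{absy}) is precisely that $\beta'c+(1-\beta')b$ is a legitimate (nonnegative) magnitude vector.
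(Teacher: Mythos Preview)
Your proof is correct and follows essentially the same projection-splitting strategy as the paper: both apply $A^*A$ and $I-A^*A$ to the fixed-point equation $S(w)=w$ to isolate the range and null-space components, and then translate back and forth between these and the pair (\ref{f3})--(\ref{absy}). Your packaging via the compact identity $w=(I-\beta' A_\bot^*A_\bot)(b\odot u)$ followed by componentwise multiplication by $\bar u$ is slightly more streamlined than the paper's route through the intermediate equations (\ref{f1})--(\ref{f2}), but the underlying argument is the same.
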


\begin{proof}
Rearranging  (\ref{RAART}),  we obtain the fixed point condition of RAAR, 
\beqq\label{RAAR_fix}
((1-{\beta})|w|-(1-2{\beta})b)\odot \frac{w}{|w|}={\beta} A^*A \left\{ (2b-|w|)\odot \frac{w}{|w|}\right\}.
\eeqq
Equivalently, taking the projections $A^*A$ and $I-A^*A$  on (\ref{RAAR_fix}) yield 
\begin{eqnarray}&& A^* A \left\{ (b-|w|) \odot \frac{w}{|w|}\right\}=0, \label{f1}\\
&& (I-A^*A)\left\{ b\odot \frac{w}{|w|}\right\}= {\beta'}^{-1}\left\{(b-|w |)\odot \frac{w}{|w|}\right\}.\label{f2}
\end{eqnarray}
 
For  the only-if part, let  $w$ be a fixed point of RAAR with (\ref{f1},\ref{f2}).  
With the definition of $c$,  (\ref{f1}) gives   \beqq\label{part1_0}
A^*A ((c-b)\odot u)={\beta'}A^*A((|w|-b)\odot u)=0,
\eeqq
and  (\ref{f2}) gives
\beqq
A_\bot^* A_\bot (c\odot \frac{w}{|w|})=A_\bot^* A_\bot \left(\left\{ b-{\beta'}^{-1} (b-|w|)\right\}\odot \frac{w}{|w|}\right)=0,\eeqq
 which implies $ c\odot u$
 in the range of $A^*$. Together with  (\ref{part1_0}), we have  (\ref{f3}). Also, 
  (\ref{absy}) is the result of the non-negativeness of $|w|$ in (\ref{cueq}).
   
To verify the if-part, we need to show  that $w$ constructed from a phase vector $u\in \IC^N$
 satisfying (\ref{f3}) 
 and a magnitude vector $|w|$ satisfying 
   (\ref{absy})
meets  (\ref{f1},\ref{f2}).
From (\ref{cueq})
and (\ref{f3}), we have (\ref{f1}), i.e., 
\beqq
A^*A((b-|w|)\odot u)=A^*A\left\{ \beta'(b-c)\odot u\right\}={\beta'}\left\{c\odot u-c\odot u\right\}=0.
\eeqq
With the aid of (\ref{f1}, \ref{f3}),  the fixed point condition in (\ref{f2}) is ensured  by the computation: \beqq
(I-A^*A)\left\{(b-{\beta'}^{-1}(b-|w|))\odot u\right\}
=(I-A^*A)\left\{c\odot u\right\}=0. \eeqq

Finally,  
 the condition in (\ref{f3}) is identical to the first optimality condition in (\ref{Ln10}). Hence,  the fixed points
 must be critical points of (\ref{main_P}).
$\square$
\end{proof}
Theorem~\ref{fixed1} indicates that each fixed point $w$
 can be   re-parameterized  by  $(u,\beta)$ satisfying (\ref{f3}) and (\ref{absy}). The condition in (\ref{absy}) always hold 
for  ${\beta'}$  sufficiently small.

{ 
\begin{rem}
The first order optimality { in (\ref{Ln10})} yields that the phase condition in (\ref{f3}) is actually the critical point condition of $u\in \cU_*$ in (\ref{main_P}). Fix one critical point $u\in \cU_*$ and let $c$ be the corresponding vector given   from  (\ref{f3}).  
 From Theorem~\ref{fixed1}, 
 $w$ given from the polar form $w=u\odot |w|$ with (\ref{absy}) is a fixed point 
 of $\beta$-RAAR,  if { $\beta$ satisfies the condition in (\ref{absy}). To further examine  (\ref{absy}),} we parameterize the fixed point $w$ by $(u,\beta)$. Let
$b^{-1}\odot K^\bot b$ denote the  threshold vector, where
 \[ K:=\Re(\diag(\bar u)A^*A \diag(u)), \; K^\bot:=I-K.
 \] The fixed point condition in  (\ref{absy}) indicates that 
 $(u,\beta_1)$  gives 
 a fixed point of $\beta_1$-RAAR with any ${\beta}_1\in (0,{\beta})$.  { That is, 
 the corresponding   parameter  $(\beta')^{-1}$ must exceed the threshold vector,  \beqq
 \label{eq_pc}
(\beta')^{-1}=\frac{1-\beta}{\beta}\ge b^{-1}\odot (K^\bot b).
\eeqq    Since  $\beta'= \beta/(1-\beta)$ can be viewed as   one penalty parameter in the associated Lagrangian in  (\ref{Leq''}), 
we  call (\ref{eq_pc}) the \textbf{penalty-threshold condition } of RAAR fixed points.  In general,
 the cardinality of RAAR fixed points
decreases  under
 a large parameter $\beta$. See Fig.~\ref{beta_interval}.}
\end{rem}
}

  For ${\beta}=1$, RAAR reduces to   FDR, whose fixed point $w$ satisfies $\|A_\bot (b\odot w/|w|)\|=0$ and thus $\|A(b\odot w/|w|)\|=\|b\|$. When phase retrieval has uniqueness property, $A(b\odot w/|w|)$ gives the reconstruction. On the other hand, for  $\beta=1/2$, 
 (\ref{RAART}) gives
\beqq\label{eq6}
S(w)=A^*A (b\odot \frac{w}{|w|})+\frac{1}{2}(I-A^*A) w.
\eeqq
Suppose   a RAAR initialization is chosen from  the range of   $A^* $. The second term in (\ref{eq6}) always vanishes and thus  RAAR  iterations reduce to     alternating projection iterations(AP) in~\cite{Chen2017}. From this perspective, 
one can regard $\beta$-RAAR as one family of algorithms interpolating AP and FDR,  varying  $\beta$ from $1/2$ to $1$.  

  \begin{figure}
 \begin{center}
     \includegraphics[width=0.4\textwidth]{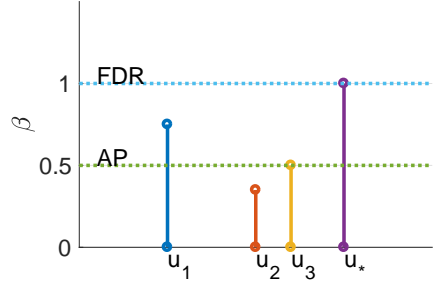} 
          \end{center}
   \caption{  
{ 
 { Illustration of  \textbf{the penalty-threshold  condition } of RAAR fixed points associated with critical points $u_1,u_2, u_3, u_*\in \cU_*$ of (\ref{main_P}). The set of  fixed points of $\beta$-RAAR  is a collection of line segments parameterized by $(u,\beta)\in \cU_*\times (0,1)$.  As $\beta$ gets larger,  the cardinality of intersections (i.e., the fixed points) decreases.     Critical points associated with  $\beta=0.5$ are   $u_1, u_3$ and $ u_*$.  The global minimizer $u_*$ is the only  associated critical point, if   $\beta\in (0.9,1)$ is used.}}}\label{beta_interval}
    \end{figure}

\subsection{  Alternative directions method of multipliers}
Next, we present one relation  between RAAR and the  alternating direction method of multipliers (ADMM). 
The alternating direction method of multipliers  was originally introduced in the 1970s\cite{Glowinski1975,Gabay1976} and can be regarded as  an approximation of the augmented Lagrangian method, whose  primal update step is replaced by one iteration of the alternating minimization.
Although ADMM is classified as one first-order method, practically ADMM could produce a solution with modest accuracy  within a reasonable amount of time. Due to the algorithm simplicity,   nowadays  this approach is  popular in many applications, in particular, applications of nonsmooth optimalication.  See~\cite{Boyd,  yang2009, Deka2019}  and the references
therein.  

Use the standard inner product
\[
\langle x,  y\rangle:=\Re(x^* y),\; \forall x,y\in\IC^N.
\] 
To solve  the problem in (\ref{main_P}), introduce one auxiliary  variable $y\in \IC^N$ with one constraint $y=z$ and one associated  multiplier $\lambda\in \IC^N$,  
and form
 the Lagrangian function
  with some parameter $\beta'>0$ in (\ref{beta'}),
   \beqq\label{Leq}
 \frac{\beta'}{2} \|A_\bot y\|^2+\left< \lambda, y-z\right>+\frac{1}{2}\|y-z\|^2, \; y\in \IC^N, z\in \cZ.
\eeqq
Equivalently, we have the augmented Lagrangian function,
\beqq\label{Leq''}
 L(y,z, \lambda):= \frac{1}{2} \|A_\bot y\|^2+
 {\beta'}^{-1}\left< \lambda,  y-z\right>+\frac{1}{2\beta'}\|y-z\|^2,
\eeqq when we 
identify
 $1/\beta'$ as
a \textbf{penalty parameter}.
To solve $(y,z,\lambda)$, ADMM
starts with some initialization $z_1\in \cZ$ and $ \lambda_1\in \IC^N$ and generates the sequence $\left\{(y_k, z_k, \lambda_k): k=1,2,3,\ldots\right\}$ with stepsize $s>0$, according to rules, 
 \begin{eqnarray*}
&&
y_{k+1}=arg\min_y L(y,z_k, \lambda_k),\label{y1}\\
&&
z_{k+1}=arg\min_{|z|=b} L(y_{k+1},z, \lambda_k)\label{z1},
\\
&&
\lambda_{k+1}=\lambda_k+s\nabla_\lambda L(y_{k+1}, z_{k+1}, \lambda)\label{eq9}
\end{eqnarray*}
Introducing one projection operator  on $\cZ$,  $[w]_{\cZ}:=w/|w|\odot b$ for $w\in \IC^N$. Algebraic computation yields
 \begin{eqnarray}
&&
y_{k+1}=(I+{\beta'} A_\bot ^*A_\bot )^{-1} (z_k-\lambda_k)=(I-{\beta} A_\bot ^* A_\bot )(z_k-\lambda_k), \label{y}
\\
&&
z_{k+1}=[y_{k+1}+\lambda_k]_\cZ,\label{z}
\\
&&\label{la11}
\lambda_{k+1}=\lambda_k+s(y_{k+1}-z_{k+1}).
\end{eqnarray}
From the $y$-update in (\ref{y}), one reconstruction $x$ for the unknown object  in (\ref{main_P}) can be computed by
\beqq\label{eq_x}
x=Ay=A(I-\beta A_\bot^* A_\bot)(z-\lambda)=A(z-\lambda).
\eeqq

Theorem~\ref{ADM_RAAR} indicates  that    RAAR is actually an ADMM  with proper initialization applied to   the  problem
 in (\ref{main_P}). In general, the step size $s$ of the dual vector $\lambda$ should be chosen properly to ensure the convergence. The following shows the relation between RAAR and the  ADMM with $s=1$. Hence, we shall focus  $s=1$ in this paper. 


\begin{theorem}\label{ADM_RAAR}
Consider one $\beta$-RAAR iteration $\{{  w_0,w_1,}\ldots, \}$ with nonzero initialization  ${  w_0}\in \IC^N$.
 Let  $
\lambda_1=A_\bot^* A_\bot w_0, \; z_1=[w_0]_\cZ.
$
Generate 
 one ADMM sequence $\left\{(y_{k+1}, z_{k+1}, \lambda_{k+1}): k={  1,2,}\ldots\right\}$ with dual step size $s=1$, according to (\ref{y}, \ref{z}, \ref{la11}) with initialization 
{   $
\lambda_1, z_1$.}
Construct a sequence $\left\{w'_k: k=1,2,\ldots\right\}$ from $(y_k, z_k, \lambda_k)$,  
\beqq\label{2.18} w'_k:=y_{k+1}+\lambda_k=(I-\beta A_\bot^* A_\bot) z_k+\beta A_\bot^* A_\bot\lambda_k.
\eeqq  Then
 the sequence $\left\{w'_k: k=1,2,\ldots\right\}$ is exactly the $\beta$-RAAR sequence, i.e., $w'_k=w_k$ for all $k\ge 1$.
 
\end{theorem}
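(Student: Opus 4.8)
The plan is to prove $w'_k=w_k$ for every $k\ge1$ by induction on $k$, writing throughout $Q:=A_\bot^*A_\bot$ --- a self-adjoint idempotent satisfying $Q=I-A^*A$ --- and abbreviating $[w]_\cZ=b\odot w/|w|$. The assertion reduces to two claims: the base identity $w'_1=S(w_0)$, and the one-step propagation $w'_{k+1}=S(w'_k)$ for all $k\ge1$. Granting both, and using $w_{k+1}=S(w_k)$ for the RAAR sequence generated by (\ref{RAART}), a one-line induction yields $w'_k=w_k$ for all $k\ge1$, which is exactly the claim. So the real content is to recover the RAAR map (\ref{RAART}) from one sweep of the three ADMM updates (\ref{y})--(\ref{la11}) with $s=1$.

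For the propagation step I would first re-express the ADMM variables through $w'_k$ alone. From $w'_k=y_{k+1}+\lambda_k$ and the $z$-update (\ref{z}) one gets $z_{k+1}=[y_{k+1}+\lambda_k]_\cZ=[w'_k]_\cZ$; then the $\lambda$-update (\ref{la11}) with $s=1$ gives $\lambda_{k+1}=\lambda_k+y_{k+1}-z_{k+1}=w'_k-[w'_k]_\cZ$. Substituting into $w'_{k+1}=y_{k+2}+\lambda_{k+1}=(I-\beta Q)(z_{k+1}-\lambda_{k+1})+\lambda_{k+1}$ and using $z_{k+1}-\lambda_{k+1}=2[w'_k]_\cZ-w'_k$, the right-hand side becomes $(I-\beta Q)\bigl(2[w'_k]_\cZ-w'_k\bigr)+\bigl(w'_k-[w'_k]_\cZ\bigr)$; expanding and replacing $Q$ by $I-A^*A$ collapses this to $\beta w'_k+(1-2\beta)[w'_k]_\cZ+\beta A^*A\bigl(2[w'_k]_\cZ-w'_k\bigr)$, which is precisely $S(w'_k)$ by (\ref{RAART}). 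This is the routine algebra at the heart of the proof, which I would display but not belabor.

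For the base case I would substitute the prescribed seed $z_1=[w_0]_\cZ$, $\lambda_1=A_\bot^*A_\bot w_0$ directly into $w'_1=(I-\beta Q)z_1+\beta Q\lambda_1$ from (\ref{2.18}), use $Q^2=Q$ together with $Q=I-A^*A$, and verify that the outcome agrees with $S(w_0)$ as read off from (\ref{RAART}). Here the special form of $(z_1,\lambda_1)$ is essential: this pair is not the output of a preceding ADMM sweep, so the propagation identity above does not apply to it verbatim, and the dual seed has to be exactly the one that makes the first ADMM sweep coincide with the first RAAR step.

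The step I expect to be the main obstacle is precisely this base case, together with the index bookkeeping --- keeping straight which ADMM triple $(y_{k+1},z_{k+1},\lambda_{k+1})$ the vector $w'_k$ is built from, and lining it up with $w_k=S^k(w_0)$. Once $z_{k+1}$ and $\lambda_{k+1}$ have been rewritten in terms of $w'_k$, the inductive step is a mechanical expansion using only idempotence of $Q$ and the identity $Q=I-A^*A$, with no further ideas needed.
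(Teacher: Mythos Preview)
Your proposal is correct and follows essentially the same inductive argument as the paper, with the base case handled identically. Your inductive step is slightly more streamlined than the paper's: you use the full identities $z_{k+1}=[w'_k]_\cZ$ and $\lambda_{k+1}=w'_k-[w'_k]_\cZ$ to obtain $w'_{k+1}=S(w'_k)$ from a single expansion, whereas the paper first isolates the $A_\bot^*A_\bot$-projections of $w'_k$ and of $\lambda_{k+1}$ separately (equations (\ref{eq9_1}) and (\ref{2.21})) before assembling $w'_{k+1}$.
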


\begin{proof} Use induction. 
For $k=1$, we have\[
w_1'=(I-\beta A_\bot^* A_\bot) z_1+\beta A_\bot^* A_\bot\lambda_1=
(I-\beta A_\bot^* A_\bot) [w_0]_\cZ+\beta A_\bot^* A_\bot w_0=w_1.
\]
Suppose $w_k'=w_k$ for $k\ge 1$. 
From (\ref{2.18}) and (\ref{z}), we have \beqq\label{eq9_1}
A_\bot^* A_\bot w'_k=A_\bot^* A_\bot((1-\beta ) z_k+\beta \lambda_k),
\eeqq
and  $ z_{k+1}=[w_k']_\cZ$.
From (\ref{la11}) and (\ref{y}), 
\beqq\label{2.21}
A_\bot^* A_\bot \lambda_{k+1}=A_\bot^* A_\bot \left\{ \beta  \lambda_k +(1-\beta)  z_k\right\}
-A_\bot^* A_\bot z_{k+1}.
\eeqq
Together with (\ref{eq9_1}), (\ref{z}) and (\ref{2.21}), we complete  the proof by the calculation, 
\begin{eqnarray}
w'_{k+1}&=&
(I-\beta A_\bot^* A_\bot) z_{k+1}+\beta A_\bot^* A_\bot\lambda_{k+1}
\\
&=&(I-\beta A_\bot^* A_\bot ) z_{k+1}+
\beta A_\bot^* A_\bot \left\{ \beta  \lambda_k +(1-\beta)  z_k\right\}
-\beta A_\bot^* A_\bot z_{k+1}\\
&=&(I-2\beta A_\bot^* A_\bot ) [w'_{k}]_\cZ +
\beta
 A^*_\bot A_\bot  w'_k\\
 &=&(I-2\beta A_\bot^* A_\bot ) [w_{k}]_\cZ +
\beta
 A^*_\bot A_\bot  w_k=w_{k+1}.\label{eq12}
\end{eqnarray}
\end{proof}

Theorem ~\ref{ADM_RAAR} provides one max-min viewpoint to explore   the  dynamics of RAAR, which motivates the study in the next section. Indeed, 
after eliminating $y$ in $L$ in (\ref{Leq})  via  (\ref{y}), we end up with a   max-min problem of  a concave-non-convex function $F$,
\beqq\label{rest_F} 
F(z,\lambda; \beta):=\left(\frac{\beta}{2} \|A_\bot(z-\lambda)\|^2-\frac{1}{2} \|\lambda\|^2 \right), \; z\in \cZ, \;  \lambda\in \IC^N.
\eeqq
We can convert RAAR convergence  to its fixed points into  the convergence to     saddles of the function $F$ in (\ref{rest_F}) by one primal-dual algorithm. 
For notation simplicity, we shall omit  $\beta$  in the function $F(z,\lambda; \beta)$, i.e., write $F(z, \lambda)$, if no confusion occurs in the context. 

\section{ Definition of local  saddles for RAAR}\label{sec:3} 

When the objective function $F$ of a max-min problem is not concave-convex, saddle points do not exist in general.  
For some smooth function $F$,
a point $(\lambda,z)$ is said to be 
a local max-min point  in~\cite{doi:10.1137/15M1026924,Daskalakis2018}, if $z=z_*$ is a local minimizer and $\lambda=\lambda_*$ is a local maximizer, i.e., \beqq
\label{dpmaxmin}
F(z_*,\lambda)\le F(z_*,\lambda_*)\le F(z,\lambda_*)
\eeqq for $(z,\lambda)$ near $(z_*,\lambda_*)$.
The standard analysis can give the  first-order and second-order characterizations. The existence of a saddle $(z_*, \lambda_*)$ in fact ensures the minimax equality. Indeed, since
$  \min_z F(z,\lambda)\le  \min_z\max_\lambda  F(z,\lambda)
$, then 
$  \max_\lambda \min_z F(z,\lambda)\le  \min_z\max_\lambda  F(z,\lambda)
$. Together with
\beqq \min_z \max_\lambda F(z,\lambda)
\le 
\max_\lambda F(z_*, \lambda)\le F(z_*,\lambda_*)\le \min_z F(z,\lambda_*)\le \max_\lambda \min_z F(z,\lambda),
\eeqq
we have  that $ \min_z \max_\lambda F(z,\lambda)
= \max_\lambda \min_z F(z,\lambda)$ for $(z,\lambda)$ holds near $(z_*,\lambda_*)$.

 In this paper, we shall adopt the idea on ``local max-min"  proposed in~\cite{Jin2020} 
to emphasize the non-symmetric role of $z,\lambda$, i.e.,   $(\lambda,z)$ is said to be a local max-min point, if for any $(\lambda, z)$ near $(\lambda_*, z_*)$ within a distance $\delta>0$, 
\beqq\label{Jinmaxmin}
\max_{z'} \left\{ F(z',\lambda): \|z'-z_*\|\le h(\delta)\right\} \le F(z_*,\lambda_*)\le F(z,\lambda_*)
\eeqq holds
for some continuous function $h: \IR\to \IR$ with $h(0)=0$. That is, the minimizer $z$ is driven by the dual vector $\lambda$ maximizing the objective function  $F$.
Since $F$ in (\ref{rest_F}) is strictly concave in $\lambda$ for $\beta\in (0,1)$, 
according to Prop. 18, 19 and 20\cite{Jin2020}, the first-order condition is $\nabla_z F(z_*,\lambda_*)=0$ and $\nabla_\lambda F(z_*,\lambda_*)=0$, and
 the second-order necessary/sufficient condition can be reduced to   $\nabla_{zz} F(z_*,\lambda_*)\succeq 0$ and 
 $\nabla_{zz} F(z_*,\lambda_*)\succ 0$, respectively. In short,  thanks to the $\lambda$-concavity in $F$,  we end up with  the   identical   characterization for   local-max-min points  in~\cite{doi:10.1137/15M1026924, Daskalakis2018}. For simplicity, we shall also call these local max-min points as local saddles.

\subsection{Local saddles}\label{sec:3.2}

From Theorem~\ref{ADM_RAAR}, the RAAR convergence  of $w_k$ to $w_*$ can be regarded as the convergence to a local saddle $(\lambda_k, z_k)$ of $F$ in (\ref{rest_F}). For each $\lambda_k$, $z_k$ is one approximate of the corresponding minimizer of $F(\lambda_k, z)$. 
From~(\ref{dpmaxmin}),   we have the following optimality of $F$ in $\lambda$ and $z$, respectively. 
 Since  $0<\beta<1$, the strict concavity of $F$ in $\lambda$  ensures the uniqueness of $\lambda$ for any vector $z\in \cZ$. We omit the proof of Prop.~\ref{fb1}. 
\begin{prop}\label{fb1} Fix one $z\in \cZ$. 
The maximizer $\lambda$ of $ F(z,\lambda)$  in (\ref{rest_F}) satisfies
\beqq\label{fb}
\lambda=-\beta' A_\bot^* A_\bot z.
\eeqq
Hence,  $\lambda_*=-\beta' A_\bot^* A_\bot z_*$ holds for a  saddle point  $(z_*,\lambda_*)$.
\end{prop}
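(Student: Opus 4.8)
The plan is to treat the inner maximization over $\lambda$ (for a fixed $z\in\cZ$) as an unconstrained concave quadratic program, solve its stationarity equation in closed form, and then read off the saddle statement from the local-max-min definition in (\ref{Jinmaxmin}).

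First I would observe that $A_\bot^*A_\bot$ is the orthogonal projection onto $\mathrm{Range}(A_\bot^*)$: it is self-adjoint and idempotent since $A_\bot A_\bot^*=I$ (a consequence of $[A^*,A_\bot^*]$ being unitary), so its spectrum lies in $\{0,1\}$. Hence for fixed $z$ the map $\lambda\mapsto F(z,\lambda)=\tfrac{\beta}{2}\|A_\bot(z-\lambda)\|^2-\tfrac12\|\lambda\|^2$ is a smooth quadratic whose (real) Hessian $\beta A_\bot^*A_\bot-I$ has eigenvalues in $\{\beta-1,-1\}\subset(-\infty,0)$ for $\beta\in(0,1)$; thus $F(z,\cdot)$ is strictly concave and admits a unique maximizer, characterized by $\nabla_\lambda F(z,\lambda)=0$. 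A direct Wirtinger/real-gradient computation gives $\nabla_\lambda F(z,\lambda)=\beta A_\bot^*A_\bot(\lambda-z)-\lambda$, so the stationarity equation is $(\beta A_\bot^*A_\bot-I)\lambda=\beta A_\bot^*A_\bot z$. Using the orthogonal decomposition $\IC^N=\mathrm{Range}(A^*)\oplus\mathrm{Range}(A_\bot^*)$ (from $A^*A+A_\bot^*A_\bot=I$ and $AA_\bot^*=0$), the operator $\beta A_\bot^*A_\bot-I$ acts as $-I$ on $\mathrm{Range}(A^*)$ and as $(\beta-1)I$ on $\mathrm{Range}(A_\bot^*)$, hence is invertible; since the right-hand side $\beta A_\bot^*A_\bot z$ lies in $\mathrm{Range}(A_\bot^*)$, we obtain $\lambda=(\beta-1)^{-1}\beta A_\bot^*A_\bot z=-\beta' A_\bot^*A_\bot z$, which is (\ref{fb}). (Equivalently, one may split $\lambda=A^*A\lambda+A_\bot^*A_\bot\lambda$ and use that $A_\bot$ is isometric on $\mathrm{Range}(A_\bot^*)$ to rewrite $F(z,\cdot)$ as $\tfrac{\beta}{2}\|A_\bot^*A_\bot(z-\lambda)\|^2-\tfrac12\|A_\bot^*A_\bot\lambda\|^2-\tfrac12\|A^*A\lambda\|^2$; then the maximizer has zero $\mathrm{Range}(A^*)$-component and its $\mathrm{Range}(A_\bot^*)$-component is found by a one-line scalar optimization.)

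Finally, for the saddle statement: if $(z_*,\lambda_*)$ is a local saddle in the sense of (\ref{Jinmaxmin}), its first-order condition forces $\nabla_\lambda F(z_*,\lambda_*)=0$, and by the strict $\lambda$-concavity established above $\lambda_*$ is then the unique maximizer of $F(z_*,\cdot)$, so $\lambda_*=-\beta' A_\bot^*A_\bot z_*$. I do not expect a genuine obstacle here; the only steps needing a little care are the real-gradient computation of $\nabla_\lambda F$ over $\IC^N$ and the inversion of $\beta A_\bot^*A_\bot-I$, both of which become immediate once the unitarity of $[A^*,A_\bot^*]$ is invoked — which is presumably why the excerpt omits the proof.
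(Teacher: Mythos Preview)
Your proof is correct and matches the approach the paper itself signals: the paper explicitly omits the proof, noting only that for $0<\beta<1$ the strict concavity of $F$ in $\lambda$ ensures the unique maximizer, which is precisely the route you take. The gradient computation, the inversion of $\beta A_\bot^*A_\bot-I$ via the $A^*A\oplus A_\bot^*A_\bot$ decomposition, and the appeal to the first-order condition at a local saddle are all straightforward and sound.
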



\begin{prop}\label{convex1} 
 Let  $q(z, \lambda)\in \IC^N$ be a vector-valued  function of $z$ and $ \lambda$, \beqq\label{q_def}
q(z,\lambda):=z^{-1}\odot A_\bot^* A_\bot( z-  \lambda).
\eeqq
Fix one  $\lambda$ in (\ref{rest_F}),
and consider the $z$-minimization 
\beqq\label{L'eq}
\min_{z\in \cZ} \left\{F(z, \lambda):=\frac{\beta}{2}\|A_\bot (z- \lambda)\|^2-\frac{1}{2}\| \lambda\|^2\right\}.
\eeqq
When  $z\in\cZ$ is
a  local minimizer of $F(z, \lambda)$, then 
\beqq\label{q_real0}
q(z,  \lambda)\in \IR^N
\eeqq
 and
\beqq\label{H2c0}  
\xi^\top (K_z ^\bot-\diag(\Re(q)))\xi\ge  0, \; \forall \xi\in\IR^N. 
\eeqq
 \end{prop}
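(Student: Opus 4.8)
The plan is to turn the constrained minimization over the torus $\cZ$ into an unconstrained one in angular coordinates and then read off the first- and second-order conditions. Since $|z_j|=b_j$ and (as is implicit in the definition of $q$) $b>0$ componentwise, the map $\theta\mapsto z(\theta)$, $z_j(\theta):=b_j e^{\im\theta_j}$, is a local diffeomorphism of $\IR^N$ onto $\cZ$, so $z$ is a local minimizer of $F(\cdot,\lambda)$ on $\cZ$ exactly when the corresponding $\theta$ is a local minimizer of $g(\theta):=F(z(\theta),\lambda)$ on $\IR^N$. Setting $M:=A_\bot^*A_\bot$ (Hermitian, $M^2=M$) and $w:=M(z-\lambda)$, and discarding the additive constant $-\tfrac12\|\lambda\|^2$, it suffices to differentiate $\phi(z):=\tfrac{\beta}{2}(z-\lambda)^*M(z-\lambda)$.

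First I would compute $\partial\phi/\partial\theta_j$. Using $\partial z_j/\partial\theta_j=\im z_j$ and the fact that only the $j$-th coordinate of $z$ involves $\theta_j$, a direct calculation gives $\partial g/\partial\theta_j=\beta\,\Im(\bar z_j w_j)$. Stationarity thus forces $\bar z_j w_j\in\IR$ for every $j$; dividing by $|z_j|^2=b_j^2>0$ this is precisely $q_j(z,\lambda)=z_j^{-1}w_j\in\IR$, which is (\ref{q_real0}). For the Hessian, when $j\neq k$ only $z_k$ inside $w_j$ depends on $\theta_k$, so $\partial^2\phi/\partial\theta_j\partial\theta_k=\beta\,\Re(M_{jk}\bar z_j z_k)$; on the diagonal one also differentiates $\bar z_j$, yielding $\partial^2\phi/\partial\theta_j^2=\beta\bigl(M_{jj}b_j^2-\Re(\bar z_j w_j)\bigr)$, and at a critical point the first-order relation gives $\Re(\bar z_j w_j)=b_j^2\Re(q_j)$. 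Since $z=b\odot u$ with $|u|=1$ yields $[K_z^\bot]_{jk}=\Re(\bar u_j M_{jk}u_k)=b_j^{-1}b_k^{-1}\Re(\bar z_j M_{jk} z_k)$ (and $[K_z^\bot]_{jj}=M_{jj}$), these entries assemble into
\[
\nabla^2_\theta g(\theta)=\beta\,\diag(b)\bigl(K_z^\bot-\diag(\Re(q))\bigr)\diag(b).
\]

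Finally, the second-order necessary condition for a local minimizer of the (now unconstrained) $g$ is $\nabla^2_\theta g\succeq0$; conjugation by the invertible diagonal matrix $\diag(b)$ preserves positive semidefiniteness, so this is equivalent to $\xi^\top\bigl(K_z^\bot-\diag(\Re(q))\bigr)\xi\ge0$ for all $\xi\in\IR^N$, which is (\ref{H2c0}). Taking $\lambda=0$ recovers Prop.~\ref{convex0}. The computations are elementary; the only places needing care are the Wirtinger/real-coordinate bookkeeping in the Hessian — in particular invoking the first-order relation $\bar z_j w_j\in\IR$ to rewrite the diagonal term as $b_j^2\Re(q_j)$ — and the observation that the $\diag(b)$ conjugation is harmless for the definiteness conclusion.
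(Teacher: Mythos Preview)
Your proof is correct and is essentially the same argument as the paper's: both parametrize $\cZ$ by phase angles and read off the first- and second-order optimality conditions from the Taylor expansion. The only cosmetic difference is that the paper substitutes $\xi:=b\odot\theta$ directly inside the expansion, whereas you compute the Hessian in the $\theta$-variable and then observe that the conjugation by $\diag(b)$ is exactly this change of variables; the two routes yield the same quadratic form $\xi^\top(K_z^\bot-\diag(\Re(q)))\xi$.
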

\begin{proof}
Let $u=z/|z|$ be the phase vector of a local minimizer $z$. 
Consider the perturbation $z\to z\odot \exp(i\theta)$ in $\cZ$ applied on $F(z, \lambda)$ with  $\theta\in \IR^N$, $\|\theta\|$ near $0$. 
Variation of $F$  can be expressed as  one function  of    tangent vectors $\xi:=b\odot \theta$ , 
$
H(\xi; z)=H(b\odot \theta; z):=\|A_\bot^* A_\bot (z \odot \exp(i\theta) - \lambda)\|^2.
$ 
With  $z=b\odot u$, the Taylor expansion
 \begin{eqnarray*}
 H(\xi; z )&=&H(0; z)+\{2\left< i (b\odot  u)\odot  \theta,A_\bot^* A_\bot (b\odot u- \lambda)  \right>-\left< (b\odot  u) \odot  \theta^2,A_\bot^* A_\bot (b\odot u- \lambda)  \right> \\
&&+\|A_\bot (b\odot u\odot \theta)\|^2\}+o(\|\theta\|^2) 
 \\
 &=&H(0; z)+2\left< i \xi,q\odot b\right> +\left\{-\left< \xi ^2, q \right>+\xi^\top K^\bot \xi\right\}+o(\|\xi\|^2)
\end{eqnarray*}
implies that 
the first-order  condition for a local minimizer  $z$ is 
 $
q\in \IR^N,
$ and  the second-order condition is  the positive semi-definite condition, \beqq\label{H_2c}
\left< \xi, ( K^\bot -\diag(q))\xi \right> \ge 0.
\eeqq
\end{proof}

Unfortunately,  the above  optimality conditions for  $F$ in (\ref{rest_F}) yields  nonexistence of local saddles under the definition in (\ref{dpmaxmin})!
Consider  a noisy case, 
$\|A_\bot z\|>0$ for all $z\in\cZ$. No Nash equilibrium  of $F$ in (\ref{rest_F}) can exist,
since  (\ref{H2c0}) and (\ref{fb}) cannot hold simultaneously   at any stationary  point of $F$.
 Indeed, suppose that $(z_*,\lambda_*)$ is 
 a saddle point with $\|A_\bot z_*\|>0$. The $\lambda$-optimality in  (\ref{fb})
 gives $\left< z_*,  \lambda_*\right>=-\beta' \|A_\bot z_*\|^2<0$. On the other hand,  as $\xi=b$,  (\ref{H2c0}) gives 
 \beqq 
 \label{3.10}
 \xi^\top (K^\bot-\diag(\Re(q)))\xi=\Re(z^* A_\bot^* A_\bot  \lambda)\ge 0.\eeqq
This inconsistency indicates that   
as $\lambda$ tends to $\lambda_*$, the corresponding local  minimizer $z$ does not approach  $z_*$ continuously. 
In the next subsection, we shall  give a proper definition for the  \textit{local}  Nash equilibrium applied on phase retrievel.
Remark~\ref{alpha-79} indicates this difficulty always exists  in the non convex-concave optimization with  phase invariance.
\begin{rem}[phase-invariance]\label{alpha-79}

Consider the problem
 \[
\min_{ \lambda}\max_{z\in \cZ} \left\{ -F(z, \lambda)\right\},
 \]
 where $F(z,\lambda)=F(\alpha z,\alpha\lambda)$ holds for any complex unit $\alpha$.
Suppose that  $(z, \lambda)$ is a local Nash equilibrium, then 
\beqq\label{eq_81}
F(\alpha z, \lambda)\le F(z,\lambda)\le F(z,\alpha \lambda)
\eeqq for any complex unit $\alpha$ near $1$ and $\lambda$ is a local maximizer. 
Then  phase-invariance of $F$ implies  
\beqq\label{eq_82}
F(\alpha z, \lambda)=F( z,\bar \alpha  \lambda)\le F(z,\lambda).
 \eeqq
 Contradiction to (\ref{eq_81}) always occurs,  if the above inequality in (\ref{eq_82}) is strict.
 \end{rem}

\subsection{Cross sections} \label{sec:3.4}

To alleviate the difficulty in Remark~\ref{alpha-79}, we shall  restrict the neighbourhood  $\cU(z_*)$ by slicing the projected torus $A_\bot \cZ$ into cross sections,  such that (\ref{Jinmaxmin}) can hold locally at each critical point. 
   Fix one $z_0\in \cZ$ 
and introduce 
 the set $
\cZ'(z_0):=\left\{z_0\odot \exp(i\theta): \left<  \theta, b^2 \right>=0, \theta\in \IR^N\right\}.
$
 Consider 
  the optimization problem \beqq\label{main_P'}
\min_{z\in \cZ'(z_0)} \left\{\| A_\bot z\|^2\right\}.
\eeqq
Note that one partition  $\cZ=\cup_\alpha \left\{\cZ'(z_0\alpha): |\alpha|=1 \right\}$
 indicates that 
for each $z\in \cZ$, $z\in \cZ'(\alpha z_0)$ holds for some complex unit $\alpha=\exp(i\rho)$, $\rho\in \IR$.
 Indeed, 
let $1_N\in\IR^N$ be a vector whose entries are all $1$.
Since $z, z_0$ both lie in $\cZ$, then \beqq
z\odot z_0^{-1}=\exp( i \delta)=\exp( i (\theta+\rho 1_N))\textrm{ for some $\delta, \theta\in \IR^N$ }
\eeqq
and $\rho:=\|b\|^{-2}\left< \delta, b^2\right>$  with $0=\left< \theta, b^2\right>$.
To proceed, we need a few notations.  At each $z\in \cZ$, introduce a matrix $K^\bot_{z}$ and a tangent plane $\left\{iu\odot \xi: \xi\in \Xi,\; u=z/|z|
 \right\}$ with   
 \[K^\bot_{z}:=\Re\left(\diag\left(\overline{\frac{z}{|z|}}\right) A_\bot^* A_\bot \diag\left(\frac{z}{|z|}\right)\right),
  \Xi:=\left\{ \xi: \xi\in \IR^N, \left<\xi,  b\right>=0\right\}.\]
We shall drop the dependence on $z$ to simplify the notation,  if no confusion occurs. 
Since the feasible set in $z$ is different from the setting in Prop.~\ref{convex1}, we must investigate again 
the local  $z$-optimality in $F(z,\lambda)$. 
 
\begin{prop}\label{convex4} 
%
%

 Fix one  $\lambda\in \IC^N$.
Consider the minimization problem 
\beqq\label{L''eq'}
\min_{z\in \cZ'(z_0)}  \left\{F(z,  \lambda):=\frac{\beta}{2}\|A_\bot ( z-  \lambda)\|^2-\frac{1}{2}\| \lambda\|^2  \right\}.
\eeqq 
 Suppose $z=b\odot u$ is a local minimizer in (\ref{L''eq'}).  Then $\Im(q(z,  \lambda))=\rho 1_N$, $\rho\in \IR$. 
The second-order necessary condition is that for all $\xi\in \Xi$, we have
\[
\left< \xi, (K_z^\bot -q(z,  \lambda))\xi\right>\ge 0.
\]
In addition, a second-order sufficient condition is that for all nonzero $\xi\in \Xi$, 
\beqq\label{eq_40_}
\|\xi\|^{-2}\left< \xi, (K_z^\bot -q(z,  \lambda))\xi\right>> 0.
\eeqq
A local minimizer  $z$ with (\ref{eq_40_}) is called a \textbf{strictly local minimizer}. 
\end{prop}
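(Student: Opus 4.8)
The plan is to mimic the perturbation argument of Proposition~\ref{convex1}, but now restricting the admissible perturbations to those that keep $z$ inside the slice $\cZ'(z_0)$, i.e.\ perturbations $z\mapsto z\odot\exp(i\theta)$ with $\langle\theta, b^2\rangle=0$. First I would set $u=z/|z|$, write a generic curve in $\cZ'(z_0)$ through $z$ as $z(t)=z\odot\exp(it\theta)$ with $\langle\theta, b^2\rangle=0$, and expand $F(z(t),\lambda)=\tfrac{\beta}{2}\|A_\bot(z\odot\exp(it\theta)-\lambda)\|^2-\tfrac12\|\lambda\|^2$ to second order in $t$. Exactly as in Proposition~\ref{convex1}, introducing the tangent vector $\xi:=b\odot\theta$ and using $z=b\odot u$, the first-order term is $2\langle i\xi, q\odot b\rangle$ where $q=q(z,\lambda)=z^{-1}\odot A_\bot^*A_\bot(z-\lambda)$, and the second-order term is $-\langle\xi^2,q\rangle+\xi^\top K_z^\bot\xi$ (only the real parts of $q$ enter the quadratic term since $\xi$ is real; the first-order term picks up $\Im(q)$).

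Next I would extract the stationarity condition. The constraint $\langle\theta, b^2\rangle=0$ translates, via $\xi=b\odot\theta$, into $\langle\xi, b\rangle=0$, i.e.\ $\xi\in\Xi$. Vanishing of the first-order term $\langle i\xi, q\odot b\rangle=-\langle \xi\odot b, \Im(q)\rangle$ for all $\xi\in\Xi$ forces $\Im(q)\odot b$ to be orthogonal to $\Xi$, hence parallel to $b$; writing $b^2\odot(\text{something})$ carefully, this says $\Im(q)\odot b^2 \in \mathrm{span}(b^2)$ after accounting for the weighting, and since $b$ has no zero entries this gives $\Im(q)=\rho 1_N$ for some scalar $\rho\in\IR$ — precisely the claimed first-order condition. (This is the one place where the analysis genuinely differs from Proposition~\ref{convex1}: there the full gradient had to vanish, giving $q\in\IR^N$; here only the component of $\Im(q)$ orthogonal to $1_N$ must vanish, because the slice has removed one direction.) Then the second-order term, evaluated at a stationary point and using $\langle\xi^2,q\rangle=\langle\xi^2,\Re(q)\rangle+i\langle\xi^2,\Im(q)\rangle$ with $\langle\xi^2,\Im(q)\rangle=\rho\|\xi\|^2$ real, gives $\xi^\top(K_z^\bot-\diag(\Re(q)))\xi - \rho\|\xi\|^2 = \xi^\top(K_z^\bot - q(z,\lambda))\xi$ in the notation of the statement (here $q$ inside the quadratic form is understood to act through $\Re(q)+\rho I$, equivalently $\diag(q)$ restricted to $\Xi$ behaves as $\diag(\Re q)+\rho I$). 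Necessity of $\langle\xi,(K_z^\bot-q)\xi\rangle\ge0$ for all $\xi\in\Xi$ then follows from the standard fact that a local minimizer has nonnegative second variation along every admissible curve; sufficiency under the strict inequality (\ref{eq_40_}) follows from the usual second-order sufficiency argument: the strict positivity of the Hessian on the tangent space $\Xi$, together with the vanishing first-order term, forces $F(z(t),\lambda)>F(z,\lambda)$ for small $t\ne0$, and a compactness/uniformity argument over the finite-dimensional sphere in $\Xi$ upgrades this to a genuine local minimum over $\cZ'(z_0)$.

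The main obstacle I anticipate is bookkeeping the constraint $\langle\theta,b^2\rangle=0$ versus $\langle\xi,b\rangle=0$ correctly and, relatedly, making precise in what sense the quadratic form ``$\langle\xi,(K_z^\bot-q)\xi\rangle$'' with a complex-valued $q$ is to be read — one must verify that on $\Xi$ the imaginary part of $q$ contributes only the harmless term $\rho\|\xi\|^2$, so that the stated inequality is equivalent to $\xi^\top(K_z^\bot-\diag(\Re q)-\rho I)\xi\ge0$, which is a real symmetric quadratic form. A secondary subtlety is that $\cZ'(z_0)$ is a smooth submanifold of $\cZ$ and one should check that the curves $t\mapsto z\odot\exp(it\theta)$, $\langle\theta,b^2\rangle=0$, exhaust (to second order) all admissible variations, so that no further second-order correction from the curvature of the constraint set is missed; this is immediate because $\cZ'(z_0)$ is itself a flat torus in these exponential coordinates, so there is no extrinsic curvature term, and the expansion above is exact in the sense needed. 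Beyond these points the argument is a routine transcription of Proposition~\ref{convex1}, so I would keep the write-up short and refer back to that proof for the Taylor expansion.
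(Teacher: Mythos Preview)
Your approach matches the paper's proof exactly: perturb $z\to z\odot\exp(i\theta)$ with $\langle\theta,b^2\rangle=0$, set $\xi=b\odot\theta\in\Xi$, and read off the first- and second-order conditions from the Taylor expansion already computed in Proposition~\ref{convex1}, the only new ingredient being the Lagrange-multiplier argument that yields $\Im(q)=\rho 1_N$ instead of $\Im(q)=0$. On your bookkeeping concern: since the paper's inner product is $\langle x,y\rangle=\Re(x^*y)$ and $\xi$ is real, the term $\langle\xi^2,q\rangle$ is already real and equals $\langle\xi^2,\Re(q)\rangle$, so no extra $\rho\|\xi\|^2$ appears and $\langle\xi,(K_z^\bot-q)\xi\rangle$ simply means $\xi^\top(K_z^\bot-\diag(\Re q))\xi$.
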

\begin{proof}
Consider a perturbation $z\to z\odot \exp(i\theta)$ with $\theta\in \IR^N$. Use arguments  similar to the proof of Prop.~\ref{convex1}. Since the objective function  in (\ref{L''eq'}) is continuously differentiable, with $\xi:=b\odot \theta$, 
 we have
\beqq\label{eq64_1}
\left< \xi, i\overline{u}\odot A_\bot^* A_\bot (z- \lambda)\right>=0
\eeqq
for all $\xi$ with 
\beqq\label{eq64_2}
\left< \xi, b\right>=
\left< \theta, b^2\right>=0.
\eeqq
Then (\ref{eq64_1}) gives for some multiplier $\rho\in\IR$,  
 \beqq\label{eq3.18}
  \Im(\overline{u}\odot A_\bot^* A_\bot  (z- \lambda))=\rho b, \; i.e.,\; \Im(q)=\rho 1_N\in \IR^N. 
 \eeqq
Note that $\xi\in\Xi$ and thus  we have the second-order  conditions. 
$\square$

\end{proof}

\begin{cor}  \label{local_cond} Let $z_0=b\odot u_0$ be a local minimizer of the problem in (\ref{main_P'}).
  Then  the first-order condition is
\beqq\label{Ln1}
q_0:={z_0}^{-1}\odot (A_\bot^* A_\bot z_0)\in \IR^N, \eeqq
and the second-order necessary condition is that 
for all $\xi\in \Xi:=\{\xi\in \IR^N; \langle b,\xi\rangle=0\}$,
\beqq\label{Ln2}
\xi^\top (K_{z_0}^\bot-\diag(q_0))\xi\ge 0.
\eeqq
Hence, $z_0$ is  a strictly  local minimizer on $\cZ$, if 
\beqq\label{Ln3}
\|\xi\|^{-2}\xi^\top (K_{z_0}^\bot-\diag(q_0))\xi> 0,\; \xi\in\Xi,\; \|\xi\|>0
\eeqq
and  (\ref{Ln1}) hold. 
 
 \end{cor}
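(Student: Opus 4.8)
The plan is to obtain Corollary~\ref{local_cond} as the special case $\lambda=0$ of Proposition~\ref{convex4}, after checking that the minimization in (\ref{main_P'}) is literally the minimization in (\ref{L''eq'}) at $\lambda=0$ with the basepoint $z_0$ playing the role of the reference point defining the cross section. First I would note that when $\lambda=0$ the objective $F(z,0)=\tfrac{\beta}{2}\|A_\bot z\|^2$ is, up to the positive constant $\beta/2$, exactly $\|A_\bot z\|^2$, so a local minimizer of (\ref{main_P'}) is a local minimizer of $F(\cdot,0)$ over $\cZ'(z_0)$ and conversely. Then I would invoke Proposition~\ref{convex4} with $\lambda=0$: the vector-valued function specializes to $q(z,0)=z^{-1}\odot A_\bot^*A_\bot z$, which at the minimizer $z_0=b\odot u_0$ is exactly the $q_0$ of (\ref{Ln1}).

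Next I would dispose of the first-order condition. Proposition~\ref{convex4} gives only $\Im(q(z_0,0))=\rho 1_N$ for some real $\rho$, so the remaining task is to show $\rho=0$, i.e. that $q_0$ is in fact real (not merely real modulo a global phase). This is where the phase-invariance discussion becomes relevant: the extra rigidity comes from the fact that in (\ref{main_P'}) the basepoint $z_0$ itself is assumed to be a critical point, so the admissible perturbations $z_0\odot\exp(i\theta)$ with $\langle\theta,b^2\rangle=0$ already exhaust a full neighbourhood of $z_0$ on the torus up to the (cost-neutral) global phase; a short argument using $\langle z_0, A_\bot^*A_\bot z_0\rangle=\|A_\bot z_0\|^2\in\IR$ together with (\ref{eq3.18}) forces the multiplier $\rho$ associated with $z_0$ to vanish, giving (\ref{Ln1}). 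I would spell this out by taking the inner product of the Euler--Lagrange relation $\Im(\bar u_0\odot A_\bot^*A_\bot z_0)=\rho b$ with $b$ and using $\langle b, \Im(\bar u_0\odot A_\bot^*A_\bot z_0)\rangle=\Im\langle z_0, A_\bot^*A_\bot z_0\rangle=0$, whence $\rho\|b\|^2=0$.

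For the second-order statements, I would simply transcribe the conclusions of Proposition~\ref{convex4} at $\lambda=0$: since $q(z_0,0)=q_0$ is now real, $\diag(\Re(q(z_0,0)))=\diag(q_0)$, and the necessary condition $\langle\xi,(K_{z_0}^\bot-q(z_0,0))\xi\rangle\ge 0$ for all $\xi\in\Xi$ becomes (\ref{Ln2}), while the sufficient condition (\ref{eq_40_}) becomes (\ref{Ln3}). The passage from ``strictly local minimizer on $\cZ'(z_0)$'' to ``strictly local minimizer on $\cZ$'' is handled exactly as in the paragraph following the statement of the cross-section problem: the partition $\cZ=\cup_\alpha\{\cZ'(\alpha z_0):|\alpha|=1\}$ together with the global phase invariance $\|A_\bot(\alpha z)\|=\|A_\bot z\|$ shows that being a strict minimizer along the slice $\cZ'(z_0)$ (modulo the one neutral direction $1_N$) already yields strictness on all of $\cZ$.

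The only genuinely non-routine point is the vanishing of the Lagrange multiplier $\rho$ at the distinguished critical point $z_0$, i.e. upgrading ``$q_0$ real up to a phase'' to ``$q_0$ real''; everything else is bookkeeping and a direct appeal to Proposition~\ref{convex4}. I expect that step to occupy essentially all of the proof, and I would present it first so that the reader sees immediately why the corollary is not merely a verbatim restriction of the proposition.
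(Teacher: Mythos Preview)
Your proposal is correct and follows essentially the same approach as the paper: specialize Proposition~\ref{convex4} to $\lambda=0$ and then kill the multiplier $\rho$ by pairing the relation $\Im(\bar u_0\odot A_\bot^*A_\bot z_0)=\rho b$ against $b$ and using $\Im\|A_\bot z_0\|^2=0$, which is exactly the one-line computation the paper gives. Your additional remark on lifting strictness from $\cZ'(z_0)$ to $\cZ$ via the partition and global phase invariance is a welcome clarification that the paper leaves implicit.
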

 \begin{proof}
 
This  is the special case of Prop.~\ref{convex4} with $\lambda=0$. Note that  $q_0=q(z,0)$ and
 we have  $q_0\in \IR^N$ from (\ref{eq3.18}) and
  \beqq
\rho=\langle b^2, \rho 1_N\rangle=\langle b^2,\Im( q(z,\lambda))\rangle=\Im(\|A_\bot z\|^2)=0.
\eeqq

 \end{proof}

Readers should notice  
different  tangent spaces  used in  Prop.~\ref{convex1} and Prop.~\ref{convex4}. 
Finally, we  show that $(z_*, \lambda_*)$ can be  a local saddle   under sufficient small $\beta$, if $z_*$ is a strictly local minimier. For the application of Fourier phase retrieval,    Theorem~\ref{thm3.2} shows the existence of a strictly local minimizer  of (\ref{main_P}) based on the  spectral gap condition  of   coded diffraction patterns.
 
  \begin{prop}\label{2.15}
  Let  $z_*$ be a   strictly local minimizer in (\ref{main_P}). 
 Let  $\lambda_*=-\beta' A_\bot^* A_\bot z_*$. Then we can find 
   $\beta$ satisfying 
\beqq\label{H2c1}  
\|\xi\|^{-2} \left< \xi, (K_{z_*}^\bot  -\diag(b^{-1}\odot K_{z_*}^\bot b))\xi \right>> \beta \|K^\bot \xi\|^2\ge 0 \textrm{ for any $\xi\in\Xi$,}
 \eeqq
such that 
  $( z_*,\lambda_*)$  is a  local saddle of 
  \beqq\label{L''eq''}
\max_{\lambda}\min_{z\in \cZ'(z_0)}  \left\{F(z,  \lambda):=\frac{\beta}{2}\|A_\bot ( z-  \lambda)\|^2-\frac{1}{2}\| \lambda\|^2  \right\}.
\eeqq 

\end{prop}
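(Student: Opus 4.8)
The plan is to verify the two defining inequalities of a local saddle in the sense of \eqref{Jinmaxmin} at the point $(z_*,\lambda_*)$, exploiting the $\lambda$-concavity of $F$ and the strict $z$-optimality of $z_*$. First I would record the first-order conditions. Since $z_*$ is a strictly local minimizer of \eqref{main_P'}, Corollary~\ref{local_cond} gives $q_0:=q(z_*,0)=z_*^{-1}\odot(A_\bot^*A_\bot z_*)\in\IR^N$. With $\lambda_*=-\beta'A_\bot^*A_\bot z_*$ one checks $z_*-\lambda_*=(I+\beta'A_\bot^*A_\bot)z_*$, hence $A_\bot^*A_\bot(z_*-\lambda_*)=(1+\beta')A_\bot^*A_\bot z_*$, so $q(z_*,\lambda_*)=(1+\beta')q_0\in\IR^N$; together with $\Im(q)=0$ this matches the first-order condition of Prop.~\ref{convex4}, and $\nabla_\lambda F(z_*,\lambda_*)=0$ is exactly Prop.~\ref{fb1}. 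So $(z_*,\lambda_*)$ is a stationary point of $F$ on $\cZ'(z_0)\times\IC^N$.

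Next I would handle the $\lambda$-side of \eqref{Jinmaxmin}, which is the easy direction: for fixed $z_*$, the map $\lambda\mapsto F(z_*,\lambda)$ is strictly concave (coefficient of $\|\lambda\|^2$ is $\tfrac{\beta-1}{2}<0$ since $\beta\in(0,1)$), and by Prop.~\ref{fb1} its unique global maximizer is $\lambda_*$. Hence $F(z_*,\lambda)\le F(z_*,\lambda_*)$ for all $\lambda$, which is the left inequality even globally in $\lambda$.

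For the $z$-side I would show that $\nabla_{zz}F(z_*,\lambda_*)\succ 0$ on the tangent space $\Xi$, which by Prop.~\ref{convex4} makes $z_*$ a strictly local minimizer of $F(\cdot,\lambda_*)$ on $\cZ'(z_0)$; a standard continuity/implicit-function argument then yields a continuous $h$ with $h(0)=0$ so that $\max\{F(z',\lambda_*):\|z'-z_*\|\le h(\delta)\}$ is attained near $z_*$ and the chain in \eqref{Jinmaxmin} closes (this is precisely the reduction already discussed after \eqref{Jinmaxmin}, using $\lambda$-concavity to pass from the Nash inequalities to the second-order characterization). The Hessian quadratic form from Prop.~\ref{convex4} at $(z_*,\lambda_*)$ is, for $\xi\in\Xi$,
\beqq\label{hessplan}
\langle\xi,(K_{z_*}^\bot-\diag(q(z_*,\lambda_*)))\xi\rangle=\langle\xi,K_{z_*}^\bot\xi\rangle-(1+\beta')\langle\xi,\diag(q_0)\xi\rangle,
\eeqq
and I would rewrite the $\beta$-scaling so that the strict-local-minimizer inequality \eqref{Ln3} for $z_*$, namely $\langle\xi,(K_{z_*}^\bot-\diag(q_0))\xi\rangle>0$ on $\Xi\setminus\{0\}$, together with the explicit slack quantified in hypothesis \eqref{H2c1}, forces \eqref{hessplan} to be strictly positive once $\beta$ (equivalently $\beta'=\beta/(1-\beta)$) is taken small enough; one also has to note $q_0=b^{-1}\odot K_{z_*}^\bot b$ comes out of the componentwise identity from the first-order condition, so the term $\diag(b^{-1}\odot K_{z_*}^\bot b)$ appearing in \eqref{H2c1} is exactly $\diag(q_0)$. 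The existence of such a $\beta$ is where \eqref{H2c1} is used: since $\|K^\bot\xi\|^2$ is bounded on the unit sphere of $\Xi$ and the left side of \eqref{H2c1} is a fixed positive continuous function there (positive by strict local minimality), \eqref{H2c1} holds for all sufficiently small $\beta>0$, and that is the $\beta$ we select.

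The main obstacle I anticipate is the $z$-side bookkeeping: carefully tracking how the factor $(1+\beta')$ multiplies the $\diag(q_0)$ term while $K_{z_*}^\bot$ does not, so that the Hessian \eqref{hessplan} is not simply a positive multiple of the strict-minimizer form and one genuinely needs the quantitative margin in \eqref{H2c1} rather than just \eqref{Ln3}; and then the soft but nontrivial step of converting "$z_*$ is a strict second-order local minimizer of $F(\cdot,\lambda_*)$ on the slice" into the exact local-max-min inequality \eqref{Jinmaxmin} with a valid modulus $h$, which requires that the constrained maximization over $\{z':\|z'-z_*\|\le h(\delta)\}$ stays inside the basin where the second-order condition controls $F$ — invoking uniform continuity of $\nabla_{zz}F$ near $(z_*,\lambda_*)$ and a Taylor estimate on the slice $\cZ'(z_0)$.
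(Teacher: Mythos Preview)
Your proposal is correct and follows essentially the same route as the paper. The paper's proof is more compressed: it records $q_0\in\IR^N$ and $q(z_*,\lambda_*)=(1+\beta')q_0$ exactly as you do, then writes the single algebraic identity
\[
(1-\beta)\,\langle\xi,(K^\bot-\diag(q(z_*,\lambda_*)))\xi\rangle
=\langle\xi,(K^\bot-\diag(q_0))\xi\rangle-\beta\,\langle\xi,K^\bot\xi\rangle,
\]
which is just your \eqref{hessplan} multiplied by $(1-\beta)$ and regrouped; from this, \eqref{H2c1} immediately gives the second-order sufficient condition of Prop.~\ref{convex4}. The $\lambda$-concavity step and the passage from the strict second-order condition to the local-max-min inequality, which you spell out carefully, are left implicit in the paper (delegated to Prop.~\ref{fb1} and the discussion after \eqref{Jinmaxmin}).
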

\begin{proof} 
Since $z_*$ is a strictly local minimizer,
then
   Cor.~\ref{local_cond} gives $q_0:=z_*^{-1}\odot  (A_\bot^* A_\bot z_*)\in \IR^N$ and 
    $\|\xi\|^{-2}\left< \xi, (K^\bot-\diag(q_0)) \xi\right>>0$. 
With $(1+\beta')=(1-\beta)^{-1}$ and  \[ q(z_*, \lambda_*)=z_*^{-1}\odot A_\bot^* A_\bot (z_*- \lambda_*)
 =(1+\beta') z_*^{-1}\odot  (A_\bot^* A_\bot z_*)=(1+\beta') q_0\in \IR^N,\]
we have the second-order sufficient condition (\ref{Ln3}) for $\beta$ satisfying (\ref{H2c1}),
\beqq\label{H2c1'} 
(1-\beta)  \left< \xi, ( K^\bot -\diag(q(z_*, \lambda_*)))\xi\right>=\left< \xi, ( K^\bot -\diag(q_0))\xi\right>-\beta \|K^\bot \xi\|^2.
\eeqq
\end{proof}

Next, we show that with probability one, the global solution  $z=A^*x_0$ is a strictly local minimizer of (\ref{main_P}) in the following Fourier phase retrieval. 
The main tool is 
 the following spectral gap theorem in~\cite{CHEN2018665}.
 
\begin{theorem}[Theorem 6.3\cite{CHEN2018665}]\label{thm6.3}Let $\Phi$ be the oversampled discrete Fourier transform. 
Let $x_0$ be a given rank $\ge 2$ object and at least one of $\mu_j$, $j=1,\ldots, l\ge 2$,  be continuously and independently distributed on the unit circle.
Let 
\beqq\label{eq3.26}
 A^*=c_0\left(
\begin{array}{c}
\Phi\diag\{\mu_1\} \\
\vdots\\
\Phi\diag\{\mu_l\}
   \end{array}
    \right)\eeqq
   be isometric with a proper choice of $c_0$ and $B:= A\diag(u_0)$, $u_0=|A^* x_0|^{-1}\odot (A^* x_0)$. Then with probability one, 
   \beqq
\label{3.24}\lambda_2=\max\{ \|\Im(B^* v)\| : v\in \IC^n, v\bot i x_0, \|v\|=1\}<1.   
   \eeqq

\end{theorem}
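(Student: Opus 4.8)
The plan is to turn the inequality into a rank statement for a single real-linear map and then reduce it to a zero-measure argument in the random masks. First, I would reduce to showing $\Re(B^*v)\neq0$: since $[A^*,A_\bot^*]$ is unitary, $AA^*=I_n$, so $\|A^*v\|=\|v\|$ for every $v\in\IC^n$, and since $|u_0|\equiv1$ componentwise, $\|B^*v\|=\|\diag(\bar u_0)A^*v\|=\|A^*v\|=\|v\|$. Hence for a unit vector $v$,
\[
\|\Re(B^*v)\|^2+\|\Im(B^*v)\|^2=\|B^*v\|^2=1 ,
\]
so $\lambda_2<1$ is equivalent to $\Re(B^*v)\neq0$ for every unit $v\bot ix_0$. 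I would then introduce the real-linear map $T\colon\IC^n\cong\IR^{2n}\to\IR^N$, $Tv:=\Re(B^*v)=\Re(\diag(\bar u_0)A^*v)$ --- which is exactly the differential at $x_0$ of the magnitude map $x\mapsto|A^*x|$ --- and observe that $B^*x_0=\diag(\bar u_0)A^*x_0=|A^*x_0|$ is real and nonzero, so $T(ix_0)=\Re(i|A^*x_0|)=0$ while $Tx_0=|A^*x_0|\neq0$. Thus $\IR\cdot(ix_0)\subseteq\ker T$ with $x_0\notin\ker T$, and $\lambda_2<1$ is precisely the statement $\ker T=\IR\cdot(ix_0)$, i.e.\ $\mathrm{rank}\,T=2n-1$; the ``bad'' event to be excluded is the existence of a nonzero $v\bot ix_0$ with $Tv=0$.

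Next I would put the bad event into algebraic form. Writing $A^*=c_0(\Phi\diag\{\mu_1\};\dots;\Phi\diag\{\mu_l\})$ with $c_0>0$, the $(j,k)$-entry of $Tv$ is a positive multiple of $\Re(\overline{(\Phi(\mu_j\odot x_0))_k}\,(\Phi(\mu_j\odot v))_k)$, where for generic masks no entry of $\Phi(\mu_j\odot x_0)$ vanishes (the exceptional set is itself a measure-zero trigonometric variety). So the bad event is the set of mask tuples for which the real-linear system in $v$ formed by $\Re(\overline{(\Phi(\mu_j\odot x_0))_k}\,(\Phi(\mu_j\odot v))_k)=0$ over all $j,k$ together with $\langle v,ix_0\rangle=0$ has a nonzero solution. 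Assembling these $N+1$ real-linear constraints into a matrix $M(\mu_1,\dots,\mu_l)\in\IR^{(N+1)\times2n}$ whose entries are trigonometric polynomials in the mask phases (by linearity of $\Phi$ and $|\mu_{jt}|=1$), the bad event is $\{\mathrm{rank}\,M<2n\}$ --- the common zero set of the $2n\times2n$ minors of $M$, equivalently $\{Q=0\}$ where $Q$ denotes the sum of the squares of those minors, a non-negative trigonometric polynomial in the masks.

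The crux is then the non-vanishing of $Q$. If $\mu_{j_0}$ is a mask that is continuously and independently distributed on the unit circle and, for every fixed value of the remaining masks, the trigonometric polynomial $\mu_{j_0}\mapsto Q$ is not identically zero, then its zero set in $\mu_{j_0}$ is finite, and by independence the bad event has probability zero, which proves the theorem. Hence it suffices to exhibit, for arbitrary fixed values of the other masks, one choice of $\mu_{j_0}$ with $\mathrm{rank}\,T=2n-1$. For that one must rule out a nonzero $v\bot ix_0$ with $Tv=0$; since $\Phi(\mu_j\odot x_0)$ has no zero entry, $Tv=0$ rewrites as $\Phi(\mu_j\odot v)=iD_j\,\Phi(\mu_j\odot x_0)$ for some real diagonal matrices $D_j$, $j=1,\dots,l$, and one must show this forces $v\in\IR\cdot(ix_0)$ (after which $v\bot ix_0$ gives $v=0$). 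This is the infinitesimal (second-order) counterpart of the coded-diffraction uniqueness problem, and the hypotheses ``$x_0$ of rank $\ge2$'' and ``$l\ge2$ masks'' are exactly what make it hold --- generic masks together with the zero padding of $\Phi$ and the rank-$\ge2$ assumption pin each $D_j$ down to one common real scalar, and injectivity of the oversampled DFT then yields $v\in\IR\cdot(ix_0)$ --- so concretely I would reuse the uniqueness and linear-independence lemmas for coded diffraction patterns of \cite{CHEN2018665}, specialized to a perturbation direction $v$, to produce such a $\mu_{j_0}$ (equivalently, to certify $Q\not\equiv0$). I expect this last step to be the main obstacle; the norm identity of the first paragraph, the matrix reformulation, and the zero-set argument are routine.
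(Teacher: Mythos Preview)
This theorem is not proved in the paper you are reading; it is quoted verbatim as Theorem~6.3 of \cite{CHEN2018665} and then used as a black box in the proof of Theorem~\ref{thm3.2}. There is therefore no in-paper proof to compare your proposal against.

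That said, your outline is the standard one and the reduction is correct: the isometry $\|B^*v\|=\|v\|$ (from $AA^*=I_n$ and $|u_0|\equiv1$) turns $\lambda_2<1$ into the rank statement $\ker T=\IR\cdot(ix_0)$ for the real-linear map $T=\Re(B^*\,\cdot\,)$, and after clearing the positive factors $|(A^*x_0)_k|$ the bad event becomes the zero set of a trigonometric polynomial in the continuously distributed mask. One point deserves care: you write ``for arbitrary fixed values of the other masks'' you will exhibit a good $\mu_{j_0}$; since the theorem allows the remaining $l-1$ masks to be deterministic (for instance uncoded, $\mu_j\equiv1$), this really must hold for \emph{every} fixed choice of them, not merely generic choices, and it is precisely here that the rank-$\ge2$ hypothesis on $x_0$ and the oversampling in $\Phi$ do real work. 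Your plan to import the linear-independence and uniqueness lemmas from \cite{CHEN2018665} at the linearized level is the right move and is essentially how that reference argues, but be aware that this is not a one-line corollary of the finite uniqueness theorem: the dimension count has to be redone for the perturbation $v$, which is exactly the ``main obstacle'' you already flagged.
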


\begin{theorem}\label{thm3.2}Let $\Phi$ be the oversampled discrete Fourier transform. 
Let $x_0$ be a given rank $\ge 2$ object and at least one of $\mu_j$, $j=1,\ldots, l\ge 2$,  be continuously and independently distributed on the unit circle.
Let 
\beqq
 A^*=c_0\left(
\begin{array}{c}
\Phi\diag\{\mu_1\} \\
\vdots\\
\Phi\diag\{\mu_l\}
   \end{array}
    \right)\eeqq
   be isometric with a proper choice of $c_0$.  Then with probability one, $z=A^* x_0$ is a strictly local minimizer of (\ref{main_P}), and
   \beqq\label{gap}
\min_{\xi} \{ \|\xi\|^{-2}\langle \xi, (K_{z}^\bot-\diag(\Re(q(z, 0))))\xi\rangle: \xi\in \IR^N, \langle |z|,\xi \rangle=0 \}\\
\ge 
1-\lambda_2>0,
\eeqq where  $\lambda_2$ is given in (\ref{3.24}).
\end{theorem}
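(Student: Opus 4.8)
The plan is to reduce the claim to an identity between the quadratic form appearing in (\ref{gap}) and the quantity $1-\|\Im(B^* v)\|^2$ controlled by Theorem~\ref{thm6.3}, and then invoke that theorem. First I would verify the first-order optimality: since $A^*$ is isometric and $z=A^* x_0$ lies in $Range(A^*)=\cA$, we have $A_\bot^* A_\bot z = A_\bot^* A_\bot A^* x_0 = 0$ because $A_\bot A^* = 0$. Hence $q(z,0) = z^{-1}\odot(A_\bot^* A_\bot z) = 0 \in \IR^N$, so (\ref{Ln1}) holds trivially and in fact $\Re(q(z,0)) = 0$, which already simplifies the quadratic form in (\ref{gap}) to $\|\xi\|^{-2}\langle \xi, K_z^\bot \xi\rangle$ over $\xi\in\Xi = \{\xi\in\IR^N:\langle|z|,\xi\rangle=0\}$.

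Next I would rewrite $\langle\xi, K_z^\bot\xi\rangle$ in terms of the matrix $B = A\diag(u_0)$. Writing $u_0 = z/|z|$ and using $I = A^*A + A_\bot^* A_\bot$, one has
\[
K_z^\bot = \Re\!\left(\diag(\bar u_0) A_\bot^* A_\bot \diag(u_0)\right) = I - \Re\!\left(\diag(\bar u_0) A^* A \diag(u_0)\right) = I - \Re(B^* B),
\]
so that for $\xi\in\IR^N$,
\[
\langle\xi, K_z^\bot\xi\rangle = \|\xi\|^2 - \langle\xi,\Re(B^* B)\xi\rangle = \|\xi\|^2 - \|\Re(B\xi)\|^2 = \|\Im(B\xi)\|^2,
\]
using $\|B\xi\|^2 = \|\xi\|^2$ (isometry of $B$ since $|u_0|=1$) together with $\|B\xi\|^2 = \|\Re(B\xi)\|^2 + \|\Im(B\xi)\|^2$ for real $\xi$. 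Thus the minimization in (\ref{gap}) becomes $\min\{\|\xi\|^{-2}\|\Im(B\xi)\|^2 : \xi\in\IR^N,\ \langle|z|,\xi\rangle=0,\ \xi\neq 0\}$, i.e. $1 - \max\{\|\xi\|^{-2}\|\Re(B\xi)\|^2\}$ over the same set.

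Then I would connect the constraint set to the orthogonality condition in Theorem~\ref{thm6.3}. The key identity is duality between $\max_\xi \|\Im(B\xi)\|/\|\xi\|$ over the real hyperplane $\{\langle b,\xi\rangle=0\}$ and $\max_v \|\Im(B^* v)\|$ over $\{v : v\perp ix_0\}$; one direction uses that $b = |A^* x_0| = |z|$ is, up to the phase correction built into $B$, the image of $x_0$, so the real functional $\xi\mapsto\langle b,\xi\rangle$ is the adjoint pairing with the excluded direction $ix_0$ (note $Bx_0$-type bookkeeping: $A^* x_0 = b\odot u_0$ gives $B^*(\cdot)$ paired with the "radial'' direction $b$). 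Carefully tracking real-versus-complex inner products, the constrained maximum of $\|\Re(B\xi)\|/\|\xi\|$ over $\Xi$ equals the constrained maximum of $\|\Im(B^* v)\|$ over $v\perp ix_0$, which is exactly $\lambda_2$ in (\ref{3.24}). Substituting gives the quadratic form $\geq 1-\lambda_2^2 \geq 1-\lambda_2 > 0$ (or directly $\geq 1-\lambda_2$), establishing (\ref{gap}); then Cor.~\ref{local_cond} with (\ref{Ln1}) and (\ref{Ln3}) gives that $z$ is a strictly local minimizer, and Theorem~\ref{thm6.3} supplies "probability one.''

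The main obstacle I anticipate is the bookkeeping in the duality step: matching the real tangent-space constraint $\langle b,\xi\rangle=0$ on the primal side with the complex orthogonality $v\perp ix_0$ on the dual side, and correctly accounting for the $\Re$ in the definition of the standard inner product $\langle x,y\rangle = \Re(x^* y)$, so that $\|\Im(B\xi)\|$ (as a function of a \emph{real} vector $\xi$) is identified with the operator norm of $\Im B^*$ restricted to the right subspace. A cleaner route, which I would pursue if the direct duality is messy, is to argue that the excluded direction $\xi = b$ (the "radial'' perturbation) corresponds precisely to the global phase direction $ix_0$, so removing it on the $\xi$-side is the same as removing $ix_0$ on the $v$-side; everything else is the singular-value identity $\|\Im B\| = \|\Im B^*\|$ applied on the orthogonal complement. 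Once the subspaces are correctly matched, the estimate (\ref{gap}) is immediate from (\ref{3.24}).
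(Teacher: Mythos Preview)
Your overall strategy matches the paper's: reduce the quadratic form to $I-\Re(B^*B)$, then link the constrained maximum of $\langle\xi,K_z\xi\rangle$ over $\xi\perp b$ to the spectral quantity $\lambda_2$ in (\ref{3.24}) by a singular-value/duality argument. The first-order step $q(z,0)=0$ is correct and is exactly what the paper uses.

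However, the central identity you write is wrong in two places. First, $B=A\,\diag(u_0)$ is \emph{co}-isometric ($BB^*=AA^*=I_n$), not isometric: $B^*B=\diag(\bar u_0)A^*A\,\diag(u_0)$ is a rank-$n$ projection on $\IC^N$, so $\|B\xi\|\neq\|\xi\|$ in general. Second, for real $\xi$ the scalar $\xi^\top B^*B\,\xi=\|B\xi\|^2$ is already real, so
\[
\langle\xi,\Re(B^*B)\xi\rangle=\Re(\xi^\top B^*B\,\xi)=\|B\xi\|^2,
\]
not $\|\Re(B\xi)\|^2$. Hence the correct reduction is
\[
\langle\xi,K_z^\bot\xi\rangle=\|\xi\|^2-\|B\xi\|^2,
\]
and your expression $\|\Im(B\xi)\|^2$ (and the subsequent ``$\|\Im B\|=\|\Im B^*\|$'' duality) does not apply.

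The paper repairs exactly this step by realifying: set $\cB:=[\Re(B^*),\ \Im(B^*)]\in\IR^{N\times 2n}$, so that $K_z=\Re(B^*B)=\cB\cB^\top$ and $\langle\xi,K_z\xi\rangle=\|\cB^\top\xi\|^2$. One checks $K_zb=b$, so $b$ is the top left singular vector of $\cB$ with singular value $1$; on the other side, $\Im(B^*v)=\cB\,[\Im v^\top,\Re v^\top]^\top$, and the image of $ix_0$ under this real-linear isometry is the top right singular vector. Thus removing $b$ on the $\xi$-side and removing $ix_0$ on the $v$-side both isolate the \emph{second} singular value $\sigma_2$ of $\cB$, giving $\lambda_2=\sigma_2$ and
\[
\max_{\xi\perp b,\ \|\xi\|=1}\langle\xi,K_z\xi\rangle=\sigma_2^2=\lambda_2^2\le\lambda_2,
\]
which is (\ref{gap}). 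Your ``cleaner route'' paragraph is essentially this argument; you should run it with the correct quantity $\|B\xi\|^2=\|\cB^\top\xi\|^2$ rather than $\|\Re(B\xi)\|^2$.
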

\begin{proof}

Note that (\ref{3.24}) implies that $\Im(B^* v)=\lambda_2\xi$ holds for some unit vector  $\xi\in \IR^N$. Then $\xi$ is one left singular vector of 
\beqq \cB:=[\Re(B^*), \Im(B^*)]\in \IR^{N\times 2n}\eeqq with singular value $\lambda_2$, while and the associated right singular vector is $[\Im(v)^\top , \Re(v)^\top]^\top$.
Let $c=K|z|$ and  $z=Ax_0$.  The    left and right singular vectors $\cB$ corresponding to singular value $1$ are
 $c\in \IR^N$ and $[\Im(ix_0)^\top, \Re(ix_0)]^\top $, respectively.
Since $\cB^\top  \cB=\Re(B^* B)=\Re(\diag(\overline{u} )A^*A\diag(u))=K_{z}$, then $\xi, c$ are eigenvectors of $K_{z}$.
From theorem~\ref{thm6.3}, with probability one we have
\beqq\label{3.27}
1>\lambda_2\ge \max_{\xi} \{ \langle\xi, K_{z}\xi \rangle: \|\xi\|=1,\langle |z|,\xi\rangle=0\}.
\eeqq
 By definition in (\ref{q_def}),  $q(z, 0)=b^{-1}\odot (K_{z}^\bot b)=1-b^{-1}\odot K_z b=0$. Finally,  since $K^\bot_{z}=I-K_{z}$,  (\ref{3.27}) gives (\ref{gap}) and  (\ref{eq_40_}).
\end{proof}

 \section{RAAR Convergence analysis}\label{sec:4}

\subsection{Convergence of RAAR}

We  shall derive one inequality stated in~(\ref{key1}), which  ensures the  convergence of RAAR iterations $\left\{w_k: k=1,2,\ldots \right\}$  in Prop.~\ref{main}. 
In Theorem.~\ref{Main1}, we shall show that
 the condition  in (\ref{key1})  holds
  near local saddles under a  sufficient large penalty $1/\beta'$. 

From the $\lambda$-iterations of ADMM in (\ref{la11}), we have
\beqq\label{st1}
\lambda_{k+1}=\lambda_k+(y_{k+1}-z_{k+1}), \textrm{ and }
w_{k}:=
\lambda_k+ y_{k+1}=\lambda_{k+1}+  z_{k+1}.
\eeqq
The $z$ iteration yields
$
z_{k+1}=[w_{k}]_{\cZ}=[ z_{k+1}+\lambda_{k+1}]_{\cZ}
$
and  $z_{k+1}+\lambda_{k+1}$ shares the same phase with $z_{k+1}$. We have a lower bound,
\beqq\label{lambda_lb} \lambda_{k+1}
\odot \frac{\overline{z_{k+1}}}{|z_{k+1}|}\in (-|z_{k+1}|,\infty).
\eeqq
With $y_*=z_*$, $\lambda_*=-\beta'A_\bot^* A_\bot z_*$,  and $y_{k+1}=(I-{\beta}A_\bot^* A_\bot) (z_k-\lambda_k)$, ${\beta'}={\beta}/(1-{\beta})$, introduce 
\begin{eqnarray}&&T(z_k,\lambda_k)
:={\beta'}  \|A_\bot (y_*-y_{k+1})\|^2+\|y_{k+1}-z_k\|^2\nonumber\\
&=& 
{\beta} (1-{\beta}) \|A_\bot ((z_k-z_*)-(\lambda_k-\lambda_*))\|^2\nonumber\\
&&
+\|A_\bot (-{\beta} (z_k-z_*)-(1-{\beta})(\lambda_k-\lambda_*))\|^2
+\|A(-\lambda_k)\|^2 \nonumber \\
&=&{\beta}  \|A_\bot (z_k-z_*)\|^2+(1-{\beta})\|A_\bot (\lambda_k-\lambda_*)\|^2
+\|A\lambda_k\|^2\label{T2}.
\end{eqnarray}
We shall derive a few inequalities  from the optimal condition of $y$ and $z$ in (\ref{Leq}), respectively.

\begin{prop}\label{1.11} 
Let $T$ be defined in (\ref{T2}). Then 
\beqq\label{37}
\|w_{k-1}-w_*\|^2-\|w_{k}-w_*\|^2
\ge 
T(z_k,\lambda_k)
+2\left<z_{k}-z_*,  \lambda_k-\lambda_*\right>
\eeqq
\end{prop}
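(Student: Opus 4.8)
The plan is to derive the inequality in (\ref{37}) by combining the optimality conditions of the two primal minimization steps of ADMM with the specific structure of the function $L$ in (\ref{Leq}). First I would note that since $w_{k-1}=\lambda_{k-1}+y_k=\lambda_k+z_k$ and $w_*=\lambda_*+y_*=\lambda_*+z_*$ (from $y_*=z_*$), and similarly $w_k=\lambda_{k+1}+z_{k+1}$, we can write the telescoping difference as
\beqq
\|w_{k-1}-w_*\|^2-\|w_k-w_*\|^2=\|(\lambda_k-\lambda_*)+(z_k-z_*)\|^2-\|(\lambda_{k+1}-\lambda_*)+(z_{k+1}-z_*)\|^2.
\eeqq
Expanding both squares and using $\lambda_{k+1}-\lambda_k=y_{k+1}-z_{k+1}$ together with the identity $w_k=\lambda_k+y_{k+1}$, I would reorganize terms so that the right-hand side contains the quadratic form $T(z_k,\lambda_k)=\beta'\|A_\bot(y_*-y_{k+1})\|^2+\|y_{k+1}-z_k\|^2$ plus the cross term $2\langle z_k-z_*,\lambda_k-\lambda_*\rangle$ plus some remainder.

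The heart of the argument is then to show that this remainder is nonnegative, and this is where the optimality conditions come in. The $y$-update $y_{k+1}=\arg\min_y L(y,z_k,\lambda_k)$ gives, by convexity of $L$ in $y$ (the Hessian being $I+\beta'A_\bot^*A_\bot\succeq I$), the variational inequality $\langle \nabla_y L(y_{k+1},z_k,\lambda_k), y_*-y_{k+1}\rangle\ge 0$, and in fact a strong-convexity-type bound $L(y_*,z_k,\lambda_k)-L(y_{k+1},z_k,\lambda_k)\ge \frac{1}{2}(\beta'\|A_\bot(y_*-y_{k+1})\|^2+\|y_*-y_{k+1}\|^2)$; since $\nabla_y L(y_{k+1},z_k,\lambda_k)=0$ here, this simplifies. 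The $z$-update $z_{k+1}=\arg\min_{|z|=b}L(y_{k+1},z,\lambda_k)$ over the torus $\cZ$ gives the variational inequality $\langle -\lambda_k-\beta'^{-1}(y_{k+1}-z_{k+1})\,,\,z_*-z_{k+1}\rangle\ge 0$ because $z_*\in\cZ$ is a feasible competitor — here one uses that on the torus the relevant first-order condition, as in (\ref{lambda_lb}), gives $\langle \lambda_{k+1}\odot\overline{z_{k+1}}/|z_{k+1}|\,,\,\text{real}\rangle$ sign information, equivalently $\langle \lambda_{k+1}, z_{k+1}-z_*\rangle\le 0$ since $z_{k+1}+\lambda_{k+1}$ and $z_{k+1}$ share a phase and $\|z_*\|$-type comparisons hold pointwise on $\cZ$. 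Adding these two inequalities and inserting $\lambda_{k+1}-\lambda_k=y_{k+1}-z_{k+1}$ should produce exactly the missing nonnegative quantity, closing the estimate.

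The main obstacle I expect is the $z$-step inequality on the non-convex torus $\cZ$: unlike the $y$-step, $L(y_{k+1},\cdot,\lambda_k)$ is minimized over a non-convex set, so one cannot directly invoke a convex variational inequality. The right tool is the pointwise (componentwise) observation behind (\ref{lambda_lb}): because $z_{k+1}=[w_k]_\cZ$ is the Euclidean projection of $w_k=y_{k+1}+\lambda_k$ onto $\cZ$, for any $z\in\cZ$ — in particular $z=z_*$ — we have $\|w_k-z_{k+1}\|^2\le\|w_k-z\|^2$, which expands to $\langle w_k-z_{k+1}, z_{k+1}-z\rangle\ge 0$, i.e. $\langle \lambda_{k+1}, z_{k+1}-z_*\rangle\ge 0$ (using $w_k-z_{k+1}=\lambda_{k+1}$). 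That projection inequality is exactly what is needed and it is valid even though $\cZ$ is non-convex, because the projection onto $\cZ$ is a genuine nearest-point map. Once both variational inequalities are in hand, the remaining work is a bookkeeping expansion of inner products verifying that the leftover terms assemble into $T(z_k,\lambda_k)$ as written in (\ref{T2}); I would carry out that expansion using $y_{k+1}=(I-\beta A_\bot^*A_\bot)(z_k-\lambda_k)$, $\lambda_*=-\beta'A_\bot^*A_\bot z_*$, and $A^*A+A_\bot^*A_\bot=I$, matching it against the three-term form $\beta\|A_\bot(z_k-z_*)\|^2+(1-\beta)\|A_\bot(\lambda_k-\lambda_*)\|^2+\|A\lambda_k\|^2$.
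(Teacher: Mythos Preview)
Your plan has a genuine gap at the $z$-step. The inequality you invoke, $\langle \lambda_{k+1}, z_{k+1}-z_*\rangle\ge 0$, is the obtuse-angle property of projection onto a \emph{convex} set; it does not follow from $\|w_k-z_{k+1}\|^2\le\|w_k-z_*\|^2$ on the non-convex torus $\cZ$. Since $\|z_{k+1}\|=\|z_*\|=\|b\|$, the nearest-point inequality only gives $\langle w_k, z_{k+1}-z_*\rangle\ge 0$, equivalently $\langle \lambda_{k+1}, z_{k+1}-z_*\rangle\ge -\tfrac{1}{2}\|z_{k+1}-z_*\|^2$, which is too weak. Concretely, componentwise one has $\langle \lambda_{k+1,i}, z_{k+1,i}-z_{*,i}\rangle=(|w_{k,i}|-b_i)\bigl(b_i-\Re(\overline{u_{k+1,i}}z_{*,i})\bigr)$, and the first factor can be negative whenever $|w_{k,i}|<b_i$, so the sign you need is simply not available. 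This is precisely the obstacle you anticipated, and the argument you offer to overcome it is the convex-case argument in disguise.

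The paper's proof sidesteps the $z$-projection entirely. It uses only two ingredients, both linear/convex: the exact quadratic expansion of $L(\cdot,z_k,\lambda_k)$ about its unconstrained minimizer $y_{k+1}$ (an equality, since the Hessian is $I+\beta'A_\bot^*A_\bot$), and the convexity inequality $C(y)-C(y_*)+\langle y-y_*,\lambda_*\rangle\ge 0$ for $C(y)=\tfrac{\beta'}{2}\|A_\bot y\|^2$, which holds because $\lambda_*=-\nabla C(y_*)$. Adding these with $y=y_*=z_*$ and $y=y_{k+1}$ yields
\[
\tfrac{1}{2}\|z_k-z_*\|^2\ \ge\ \langle \lambda_k-\lambda_*,\, y_{k+1}-y_*\rangle+\tfrac{1}{2}\|y_{k+1}-y_*\|^2+\tfrac{1}{2}T(z_k,\lambda_k),
\]
and then the purely algebraic identities $y_{k+1}-z_*=(w_k-w_*)-(\lambda_k-\lambda_*)$, $z_k-z_*=(w_{k-1}-w_*)-(\lambda_k-\lambda_*)$ convert this into (\ref{37}). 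No property of the $z$-update beyond $w_{k-1}=z_k+\lambda_k$ is used. You should restructure your argument along these lines: drop the torus variational inequality and instead exploit the saddle relation $\lambda_*=-\beta' A_\bot^*A_\bot z_*$ (i.e.\ $\lambda_*=-\nabla C(y_*)$) as the second inequality.
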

\begin{proof}
Let $C$ be the cost  function, $C(y)={\beta'} \|A_\bot y\|^2/2$, for $ y\in \IC^N$.
We shall prove
\beqq\label{T3}
\frac{1}{2} \|z_k-z_*\|^2\ge 
\left< \lambda_k-\lambda_*,  y_{k+1}-y_*\right>+\frac{1}{2}\| y_{k+1}-y_*\|^2+\frac{1}{2}T(z_k,\lambda_k).
\eeqq
To this end, 
 we make two 
claims. First,
the optimality of $y_{k+1}$ in (\ref{y1}) indicates that for all $y\in \IC^N$,
\begin{eqnarray}
C(y)-C(y_{k+1})&=&-\frac{1}{2} \|y-z_k\|^2+\left< \lambda_k,  (y_{k+1}-y)\right>+\frac{1}{2} \|y-y_{k+1}\|^2\\
&+&
\left(\frac{1}{2} \|y_{k+1}-z_k\|^2
+\frac{{\beta'}}{2} \|A_\bot (y-y_{k+1})\|^2\right).\label{Q2a}
\end{eqnarray}
Second, the optimality of $y_*$ in $C(y)$ indicates
\beqq\label{Q2b}
C(y)-C(y_*)+\left< (y-y_*),  \lambda_*\right>\ge 0.
\eeqq

To verify  (\ref{Q2a}),  use the optimality
$y_{k+1}=(I+{\beta'} {A_\bot^*} A_\bot )^{-1} (z_k-\lambda_k)$  in (\ref{y1}), which gives
 $ \nabla_y L(y_{k+1}, z_k, \lambda_k)=0$.
 The quadratic convexity  of $L$ in $y$ gives
 (\ref{Q2a}), i.e., 
\begin{eqnarray*}
&&L(y, z_k, \lambda_k)=C(y)+\left< \lambda_k,  (y-z_k)\right>+\frac{1}{2} \|y-z_k\|^2\\
&=&
C(y_{k+1})+\left< \lambda_k,  (y_{k+1}-z_k)\right>+\frac{1}{2} \|y_{k+1}-z_k\|^2+ (\frac{{\beta'}}{2} \|A_\bot (y-y_{k+1})\|^2
+\frac{1}{2} \|y-y_{k+1}\|^2
).
\end{eqnarray*}
For (\ref{Q2b}),  with  $\lambda_*=-{\beta'} A_\bot^* A_\bot z_*=-{\beta'} A_\bot^* A_\bot y_*$,
Taylor's expansion of $C(y)$ at $y_*$ gives 
\begin{eqnarray}
&& C( y)-C(y_*)= {\beta'} \left< A_\bot^* A_\bot y_*,  y-y_*\right>+
\frac{{\beta'}}{2} \|A_\bot(y_*-y)\|^2
\ge \left< -\lambda_*,   y-y_*\right>.
\end{eqnarray}
With $y=y_*=z_*$ in  (\ref{Q2a}) and  $y=y_{k+1}$ in (\ref{Q2b}), (\ref{Q2a})+(\ref{Q2b}) gives (\ref{T3}).

Next, from (\ref{st1}), we have two identities,
\begin{eqnarray}\label{sq1}
&&(y_{k+1}-z_*)
=w_{k}-w_*-(\lambda_k-\lambda_*),\\
&&\label{sq2}
(z_{k}-z_*)
=w_{k-1}-w_*-(\lambda_k-\lambda_*).
\end{eqnarray}
The difference of the squares of (\ref{sq1}) and (\ref{sq2}) gives
\begin{eqnarray}
&&
- \|z_*-z_k\|^2+ \|z_*-y_{k+1}\|^2\nonumber \\
&=&  
\|w_{k}-w_*\|^2-\|w_{k-1}-w_*\|^2
-2\left<w_{k}-w_{k-1},  \lambda_k-\lambda_* \right>\label{end:2}
\\
&=&
\|w_{k}-w_*\|^2-\|w_{k-1}-w_*\|^2
+2\left<z_{k}-z_*,  \lambda_k-\lambda_*\right>\nonumber\\
&&-2\left<\lambda_k -\lambda_*,  y_{k+1}-z_*\right>,
\label{end:1}
\end{eqnarray}
where the last equality is given by the difference of (\ref{sq1}) and (\ref{sq2}).  
The proof of (\ref{37}) is completed by  (\ref{end:1}) and (\ref{T3}).
\end{proof}

 Note that for each  fixed point  $w_*:=z_*+\lambda_*$  of RAAR,  $\alpha w_*$ is also a fixed point of RAAR with any complex unit $\alpha$.
\begin{prop}\label{main} For $z,\lambda\in \IC^N$, let $\alpha $ be  the corresponding  global phase factor between $w$ and $w_*$,
\beqq\label{alpha_w}
arg\min_{\alpha }\left\{  \|w-\alpha w_*\|: |\alpha|=1\right\},\; w=z+\lambda, \; w_*:=z_*+\lambda_*.\eeqq
Suppose that there exists  some constant $c_0>1$ such that the following inequality 
\beqq\label{key1}
 T(z,\lambda)\ge  2c_0
\left< \alpha z_*-z, \lambda-\alpha \lambda_*\right>
\eeqq
holds for $(z,\lambda)=(z_k,\lambda_k)$ for $k\ge k_0$. Then  any limit point $(z',\lambda')$ of RAAR satisfies 
\[
A_\bot (z'-\alpha z_*)=0,\; \lambda-\alpha \lambda_*=0 \; \textrm{ for some complex unit $\alpha$}. 
\]
 \end{prop}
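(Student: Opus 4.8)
The plan is to turn (\ref{key1}) together with the one-step estimate of Prop.~\ref{1.11} into monotonicity of the phase-reduced distance $d_k:=\min_{|\alpha|=1}\|w_k-\alpha w_*\|$, to deduce from the resulting telescoped inequality that the quantity $T$ along the iterates tends to $0$, and then to extract the conclusion from an arbitrary convergent subsequence by compactness and continuity.

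First I would record a phase-covariant version of Prop.~\ref{1.11}. Its proof uses only the structural relations $y_*=z_*$, $\lambda_*=-\beta'A_\bot^*A_\bot z_*$ and $w_*=z_*+\lambda_*$, and all three survive the replacement $(z_*,\lambda_*,w_*)\mapsto(\alpha z_*,\alpha\lambda_*,\alpha w_*)$ for any complex unit $\alpha$; re-running the argument therefore gives, for every such $\alpha$,
\[
\|w_{k-1}-\alpha w_*\|^2-\|w_{k}-\alpha w_*\|^2\ \ge\ T_\alpha(z_k,\lambda_k)+2\langle z_k-\alpha z_*,\ \lambda_k-\alpha\lambda_*\rangle,
\]
where $T_\alpha(z,\lambda):=\beta\|A_\bot(z-\alpha z_*)\|^2+(1-\beta)\|A_\bot(\lambda-\alpha\lambda_*)\|^2+\|A\lambda\|^2\ge 0$ is the analogue of (\ref{T2}). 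Now let $\alpha_{k-1}$ be an optimal phase of $w_{k-1}=z_k+\lambda_k$, which is exactly the $\alpha$ entering (\ref{key1}) at $(z,\lambda)=(z_k,\lambda_k)$, and set $S:=\langle z_k-\alpha_{k-1}z_*,\ \lambda_k-\alpha_{k-1}\lambda_*\rangle$. Then (\ref{key1}) reads $T_{\alpha_{k-1}}(z_k,\lambda_k)\ge -2c_0S$, hence $2S\ge -c_0^{-1}T_{\alpha_{k-1}}(z_k,\lambda_k)$, so the right-hand side above (with $\alpha=\alpha_{k-1}$) is at least $(1-c_0^{-1})T_{\alpha_{k-1}}(z_k,\lambda_k)\ge 0$. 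On the left-hand side $\|w_{k-1}-\alpha_{k-1}w_*\|=d_{k-1}$ by the choice of $\alpha_{k-1}$, whereas $\|w_{k}-\alpha_{k-1}w_*\|\ge d_{k}$, so
\[
d_{k-1}^2-d_k^2\ \ge\ (1-c_0^{-1})\,T_{\alpha_{k-1}}(z_k,\lambda_k)\ \ge\ 0\qquad(k\ge k_0).
\]
Consequently $\{d_k\}_{k\ge k_0-1}$ is nonincreasing and bounded below, hence convergent, which forces $d_{k-1}^2-d_k^2\to 0$ and then $T_{\alpha_{k-1}}(z_k,\lambda_k)\to 0$; since $\beta\in(0,1)$, each of the three nonnegative summands vanishes in the limit: $\|A_\bot(z_k-\alpha_{k-1}z_*)\|\to 0$, $\|A_\bot(\lambda_k-\alpha_{k-1}\lambda_*)\|\to 0$ and $\|A\lambda_k\|\to 0$.

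Finally I would take limits along a subsequence. The sequences $\{z_k\}\subset\cZ$, $\{\lambda_k=w_{k-1}-z_k\}$ (bounded because the $d_k$ are) and $\{\alpha_{k-1}\}\subset\{|\alpha|=1\}$ are bounded, so along a common subsequence $(z_{k_j},\lambda_{k_j})\to(z',\lambda')$ and $\alpha_{k_j-1}\to\alpha'$ with $|\alpha'|=1$, and by continuity $A_\bot(z'-\alpha'z_*)=0$, $A_\bot(\lambda'-\alpha'\lambda_*)=0$ and $A\lambda'=0$. Since also $A(\alpha'\lambda_*)=-\beta'\alpha'(AA_\bot^*)A_\bot z_*=0$, combining $A\lambda'=0$ with $A_\bot(\lambda'-\alpha'\lambda_*)=0$ through $I=A^*A+A_\bot^*A_\bot$ forces $\lambda'=\alpha'\lambda_*$, and with $A_\bot(z'-\alpha'z_*)=0$ this is the asserted statement (with $\alpha=\alpha'$).

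I expect the main obstacle to be exactly the phase bookkeeping in the middle step: the honest Lyapunov quantity $\|w_k-w_*\|^2$ is tied to a fixed phase, so one must verify that passing to the phase-minimized distance $d_k$ while simultaneously letting the reference phase $\alpha_{k-1}$ drift with $k$ still produces a monotone sequence, and that this drift does not spoil the subsequential limit (which is why a convergent subsequence of $\{\alpha_{k-1}\}$ has to be extracted as well); everything else is a routine rearrangement once Prop.~\ref{1.11} is used in its phase-covariant form.
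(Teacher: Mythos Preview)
Your proposal is correct and follows essentially the same approach as the paper: apply the phase-covariant form of Prop.~\ref{1.11} with the optimal phase $\alpha_{k-1}$ of $w_{k-1}$, combine it with (\ref{key1}) to get a nonnegative lower bound $(1-c_0^{-1})T_{\alpha_{k-1}}(z_k,\lambda_k)$, telescope (using $\|w_k-\alpha_{k-1}w_*\|\ge d_k$), and pass to a subsequential limit together with a limit phase. Your monotonicity argument for $d_k$ yields the full limit $T_{\alpha_{k-1}}(z_k,\lambda_k)\to 0$, whereas the paper sums the inequalities and only extracts $\liminf_k T=0$; this is a mild strengthening but the two routes coincide in spirit and in the final step.
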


\begin{proof} Recall $w_{k-1}=z_k+\lambda_k$ and $w_*=z_*+\lambda_*$. 
Let  $\alpha_k$ be 
some global factor in 
 $ \alpha_k:=arg\min_{|\alpha|=1}\|w_k-\alpha w_*\|.$
Summing ~(\ref{37}) over $k=k_0,\ldots, k_1$, for some global phase factors $\alpha_{k_0},\ldots, \alpha_{k_1}$,
\begin{eqnarray}&& \|w_{k_0-1}-\alpha_{k_0-1}w_* \|^2-\|w_{k_1-1} -\alpha_{k_1-1}w_*\|^2 \\
&=&\|z_{k_0}+\lambda_{k_0}-\alpha_{k_0-1}(z_*+\lambda_*) \|^2-\|z_{k_1}+\lambda_{k_1} -\alpha_{k_1-1} (z_*+\lambda_*)\|^2 \\
&\ge & \sum_{k=k_0}^{k_1-1} \left\{ \|z_{k}+\lambda_{k}-\alpha_{k-1}(z_*+\lambda_*) \|^2-\|z_{k+1}+\lambda_{k+1} -\alpha_{k}(z_*+\lambda_*)\|^2 \right\}\\
& \ge& \sum_{k=k_0}^{k_1-1}
\left\{ T(z_k,\lambda_k)-2\left<\alpha_{k-1} z_*-z_{k}, \lambda_{k}-\alpha_{k-1}\lambda_*\right> \right\}\\
&\ge& (1-c_0^{-1}) \sum_{k=k_0}^{k_1-1} T(z_k,\lambda_k).
\end{eqnarray}
Hence, 
\beqq\label{eq81}
( 1-c_0^{-1}) (k_1-k_0) \left\{\min_{k=k_0,\ldots, k_1-1} T(z_k,\lambda_k)\right\}\le  \|w_{k_0-1}-\alpha_{k_0-1}w_* \|^2.
\eeqq
Let $k_1\to \infty$. Since the left-hand side is bounded above and $1-c_0^{-1}>0$, then 
\[
\liminf_{k\to \infty} T(z_k,\lambda_k)=0.
\]
Let $(z',\lambda')$ be a limiting point and $\alpha$ be the limiting phase factor. For  ${\beta}\in (0,1)$,
 from (\ref{T2})
\beqq\label{51}
A\lambda'=0,  A_\bot(-{\beta} (z'-\alpha z_*)-(1-{\beta}) (\lambda'-\alpha \lambda_*))=0=A_\bot ((z'-\alpha z_*)-(\lambda'-\alpha \lambda_*)).
\eeqq
The second part of (\ref{51}) gives  $A_\bot \lambda'=\alpha A_\bot \lambda_*$. Thus $\lambda'=\alpha \lambda_*$ and   $A_\bot z'=\alpha A_\bot z_*$. 

 \end{proof}

\begin{rem}When (\ref{key1}) holds eventually, then (\ref{eq81}) indicates that 
 $T(z_k,\lambda_k)$
 sub-linearly
  converges to  $0$, i.e.,  (\ref{T2}) indicates  sub-linear  convergence  of RAAR,  $O((k_0-k_1)^{-1})$. This is consistent with   sub-linear convergence in FDR numerical experiments in \cite{CHEN2018665}.  
\end{rem}

 \subsection{Justification of (\ref{key1}) from local saddles}
Next  we shall  verify  that  the  convergence condition in (\ref{key1}) holds,i.e., 
\beqq
{\beta}  \|A_\bot (z-z_*)\|^2+(1-{\beta})\|A_\bot (\lambda-\lambda_*)\|^2
+\|A\lambda\|^2
> 2 \left< z_*-z, \lambda-\lambda_*\right>.
\eeqq
 if 
the positive definite condition  (\ref{m3}) holds at $z_*$ and $(z,\lambda)$ is close  to $(z_*, \lambda_*)$.
For the sake of simplicity, we shall omit the global factors $\alpha$ in front of $z$ and $\lambda$, if no confusion occurs. 
\begin{rem}\label{Tbound} 
With  ${\beta}\in (0,1)$,
$T(z,\lambda)$  can quantize  one distance between $(z,\lambda)$ and $(z_*, \lambda_*)$. That is, 
for   $\epsilon>0$,  from (\ref{T2}),  $T(z,\lambda)<\epsilon$ implies
 \beqq\label{bT}
\max\left\{  {\beta}  \|A_\bot (z-z_*)\|^2,(1-{\beta})\|A_\bot (\lambda-\lambda_*)\|^2,
\|A\lambda\|^2\right\}\le  \epsilon.
\eeqq
Thus, $
 \|A_\bot (z-z_*)\|^2\le \epsilon/{\beta},\; \|\lambda-\lambda_*\|^2\le \epsilon/(1-{\beta}).
$
\end{rem}

%
%

\begin{theorem} \label{Main1}

Let  $z_*$ be a strictly local minimizer in (\ref{main_P}).
Then we can find  $\beta\in (0,1)$ satisfying
\beqq  
 (1- \beta) \left< \xi, K_{z_*}^\bot\xi \right>  > 2  \left< \xi,  \diag(b^{-1}\odot (K_{z_*}^\bot b))\xi\right> \textrm{ for any unit vector $\xi\in \Xi$.}
\label{m3}\eeqq 
Consider $\beta$-RAAR with this ${\beta}\in (0,1)$. Let
 $
\lambda_*=-{\beta'} A_\bot^* A_\bot z_*$ 
and
 $w_*=z_*+\lambda_*$. Then 
  there is some constant $\epsilon>0$, 
such that  
(\ref{key1})  holds for  all $(z,\lambda)$ with 
$ \|w-w_*\|^2<\epsilon$, where    a proper  complex unit  is applied on $w_*$ according to   (\ref{alpha_w}).

\end{theorem}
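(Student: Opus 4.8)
The plan is to derive (\ref{key1}) from a second‑order Taylor expansion of both sides about $(z_*,\lambda_*)$: because $z_*$ is a constrained critical point the first‑order terms cancel, and the strict‑minimizer inequality of Cor.~\ref{local_cond}, sharpened to (\ref{m3}), produces on the tangent plane $\Xi$ a quadratic form that strictly dominates $2c_0$ times the quadratic form arising from the coupling $\langle z_*-z,\lambda-\lambda_*\rangle$. Concretely, writing $z_*=b\odot u_*$, Cor.~\ref{local_cond} gives the first‑order condition $q_0:=z_*^{-1}\odot(A_\bot^*A_\bot z_*)=b^{-1}\odot(K_{z_*}^\bot b)\in\IR^N$ and $\langle\xi,(K_{z_*}^\bot-\diag(q_0))\xi\rangle>0$ for every nonzero $\xi\in\Xi$. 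I first pick $\beta\in(0,1)$ small enough that (\ref{m3}) holds (automatic for every such $\beta$ when $q_0=0$, the coded‑diffraction case of Theorem~\ref{thm3.2}) and that, in addition, $\beta'=\beta/(1-\beta)$ — hence $\lambda_*=-\beta'(b\odot q_0)\odot u_*$ — is small and the penalty‑threshold condition holds, so that $|w_*|>0$ componentwise and $w\mapsto([w]_\cZ,\,w-[w]_\cZ)$ is analytic near $w_*$. A compactness argument on the unit sphere of $\Xi$ then fixes a constant $c_0>1$ (close to $1$) and $m_0>0$ with $(1-\beta)\langle\xi,K_{z_*}^\bot\xi\rangle-2c_0\langle\xi,\diag(q_0)\xi\rangle\ge m_0\|\xi\|^2$ for all $\xi\in\Xi$.

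For the RAAR pairs $(z,\lambda)=(z_k,\lambda_k)$ to which Prop.~\ref{main} applies one has $z=[w]_\cZ$ and $\lambda=w-z$; after replacing $w_*$ by its nearest copy $\alpha w_*$ and using the phase‑invariance of $T$ and of the coupling to set $\alpha=1$, smallness of $\|w-w_*\|$ forces $z$ near $z_*$ on $\cZ$ and $\lambda$ near $\lambda_*$. Writing $z=b\odot u_*\odot e^{i\tilde\phi}$ with $\tilde\phi:=\arg w-\arg w_*\to0$ and $\xi:=b\odot\tilde\phi$, a direct computation using only $z_*=b\odot u_*$ and $\lambda_*=-\beta'(b\odot q_0)\odot u_*$ gives $\|A_\bot(z-z_*)\|^2=\langle\xi,K_{z_*}^\bot\xi\rangle+O(\|\xi\|^3)$ and $\langle z_*-z,\lambda-\lambda_*\rangle=\beta'\langle\xi,\diag(q_0)\xi\rangle+O(\|\xi\|^2\|w-w_*\|)$, with remainders that vanish when $\xi=0$; there is no first‑order contribution because $\|A_\bot(z-z_*)\|^2$ is a nonnegative analytic function of $\tilde\phi$ vanishing at $\tilde\phi=0$ while the coupling is a product of two factors each $O(\|w-w_*\|)$. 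Discarding $(1-\beta)\|A_\bot(\lambda-\lambda_*)\|^2+\|A\lambda\|^2\ge0$ from $T$ and combining,
\[
T(z,\lambda)-2c_0\langle z_*-z,\lambda-\lambda_*\rangle\ \ge\ \frac{\beta}{1-\beta}\Big[(1-\beta)\langle\xi,K_{z_*}^\bot\xi\rangle-2c_0\langle\xi,\diag(q_0)\xi\rangle\Big]\ +\ O\!\big(\|\xi\|^2\|w-w_*\|\big).
\]
The nearest‑point alignment places $\tilde\phi$ into $\Xi$ up to an $O(\beta')$ relative error (exactly, when $q_0=0$ and so $\lambda_*=0$), so for $\beta$ small the bracket is at least $\frac12 m_0\|\xi\|^2$; choosing $\epsilon$ so small that $O(\|\xi\|^2\|w-w_*\|)<\frac{\beta m_0}{4(1-\beta)}\|\xi\|^2$ whenever $\|w-w_*\|^2<\epsilon$ makes the right‑hand side nonnegative, which is (\ref{key1}).

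The delicate point I expect is exactly this alignment bookkeeping: the factor $\alpha$ in Prop.~\ref{main} is the Euclidean nearest‑point factor for $w$ against $w_*$, which kills the global‑phase mode in the metric weighted by $|w_*|^2$ rather than the $b^2$‑metric that defines $\Xi$ (and hence (\ref{m3})); one must check that the mismatch perturbs $\tilde\phi$ by only $O(\beta')$ and is therefore harmless once $\beta$ is small, and that it is absent when $q_0=0$. The other point needing care is keeping the Taylor remainders uniform — $O(\|\xi\|^2\|w-w_*\|)$, not merely $o(\|\xi\|^2)$ — which is where analyticity of $w\mapsto([w]_\cZ,\,w-[w]_\cZ)$ near $w_*$ (valid since $|w_*|>0$) enters. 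The remaining algebra — verifying the two expansions and the compactness bounds — is routine.
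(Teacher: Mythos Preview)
Your approach is correct and captures the same mechanism as the paper: both bound $2c_0\langle z_*-z,\lambda-\lambda_*\rangle$ by the single term $\beta\|A_\bot(z-z_*)\|^2$ of $T$, and both reduce this to the quadratic inequality (\ref{m3}) on~$\Xi$.

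The paper organizes the computation differently and thereby avoids Taylor remainders altogether. Instead of expanding about $u_*$, it introduces the \emph{midpoint phase} $u'=(z+z_*)/|z+z_*|$ and sets $\xi=(-i)\bar u'\odot(z_*-z)$, which is exactly real when $|z|=|z_*|=b$ and satisfies $|\xi|=|z-z_*|$ componentwise. With this choice one has the \emph{exact} equalities
\[
\|A_\bot(z-z_*)\|^2=\langle\xi,\bar u'\odot A_\bot^*A_\bot(u'\odot\xi)\rangle,
\qquad
2\langle\lambda-\lambda_*,z_*-z\rangle=-\Big\langle\frac{\lambda}{z}+\frac{\lambda_*}{z_*},\,|z-z_*|^2\Big\rangle,
\]
the second coming from $\lambda/z,\lambda_*/z_*\in\IR^N$ (the RAAR structure $z=[w]_\cZ$, $\lambda=w-z$). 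The desired inequality then becomes a single strict inequality between continuous quadratic forms in~$\xi$, whose coefficients depend continuously on $(z,\lambda)$ through $u'$ and $\lambda/z$; at $(z_*,\lambda_*)$ it \emph{is} (\ref{m3}), so continuity (together with the alignment, which forces $\xi/\|\xi\|$ near~$\Xi$) finishes the proof without any $O(\cdot)$ bookkeeping. Your Taylor route recovers the same leading terms and reaches the same endpoint; the midpoint-phase trick simply converts your $O(\|\xi\|^3)$ and $O(\|\xi\|^2\|w-w_*\|)$ remainders into an exact statement plus a clean continuity argument. Your explicit treatment of the alignment discrepancy (the $|w_*|^2$- vs.\ $b^2$-metric) is a point the paper treats rather tersely.
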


\begin{proof} First, the existence of $\beta$ for (\ref{m3}) is ensured by Prop.~\ref{2.15}.
 The RAAR iterations satisfying (\ref{z},\ref{la11}, \ref{st1},\ref{lambda_lb}) indicate
  the decomposition $w=z+\lambda$ with $z\in \cZ$ and $\lambda\odot z^{-1}\in \IR^N$. 
Let $u=z/|z|$, $u_*=z_*/|z_*|$ and   $q_0=b^{-1}\odot K^\bot b$. Then 
$ \bar u\odot \lambda\in \IR^N, \bar u_*\odot \lambda_*\in \IR^N$
and  \begin{equation}
(z_*)^{-1}\odot \lambda_*=-\beta' b^{-1}\odot K^\bot b=-\beta' q_0.
\end{equation}
 Using continuity arguments on (\ref{m3}),  we have  with $u'=(z+ z_*)/|z+ z_*|$, $\xi=(-i)\bar u'\odot ( z_*-z)\in \IR^N$,
 \begin{equation}
  \label{eq86}
(1-\beta)
\left< 
\xi, \bar u'\odot (A_\bot^* A_\bot (u'\odot \xi))
\right>
 >
-(\beta')^{-1}
 \left< \xi, \left(\frac{\lambda_*}{z_*}+\frac{\lambda}{z}\right)\odot \xi\right>
 \end{equation}
 for $(\lambda, z)$   sufficiently close to $( \lambda_*, z_*)$. Observe that as $z\to z_*$, we have 
$
\left< \xi, b\right>=0.
$ Note that $T(z,\lambda)$ has an upper bound $2(1-c_0^{-1})^{-1}\|w_{k_0-1}-w_*\|^2/2$ from (\ref{eq81}). 
 According to Remark~\ref{Tbound}, (\ref{eq86}) holds, if $\|w_{k_0-1}-w_*\|<\epsilon$ holds for some $\epsilon$ sufficiently small. 
With  (\ref{eq86}), algebraic computation gives  (\ref{key1}). Indeed, 
\begin{eqnarray*}
&&2c_0 \left< \lambda-\lambda_*, z_*-z\right>\\
&=&
2c_0 \left< \lambda\odot z^{-1},\bar z\odot ( z_*-z)\right>-2c_0 \left<\lambda_*\odot z_*^{-1},\bar z_* \odot (z_*-z)
\right>\\
&=&  
- c_0 \left<\lambda\odot  z^{-1} + \lambda_*\odot  z_*^{-1}, |  (z-z_*)|^2\right>
\\
&\le &  \label{m2}
\beta  \left< \xi, \bar u'\odot (A_\bot^* A_\bot (u'\odot \xi))\right>=
\beta\|
A_\bot(z-z_*)\|^2\le T(z,\lambda).
 \end{eqnarray*}
 Thus,  $\|w_{k_0}-w_*\|<\epsilon$ gives the  closeness condition for the sequential vector $(z,\lambda)$.  
 \end{proof}


%
%
%

\section{Numerical experiments}

 \subsection{Gaussian-DRS} The $\beta$-RAAR algorithm is not the only  algorithm,  which can screen out  some undesired local saddles via varying  penalty parameters.  Recently,\cite{fannjiang2020fixed} proposed 
 Gaussian-Douglas-Rachford Splitting 
 to solve phase retrieval via minimizing  a loss function  $\||z|-b\|^2$ subject to $z$ in the range  of $A^*$. Let $x$ be the unknown object.  Let  $1_\cF(y)$ be   the indicator function of the range $\cF$ of $A^*$. Then  $A^* x \in \cF$.
 Similar to RAAR,  the algorithm can  be formulated  as  ADMM with a penalty parameter $\rho>0$  to reach
  a   local max-min point  of the Lagrangian function
 \beqq\label{DRS_150}
 \max_{\lambda}\min_{y,z\in\IC^N} \left\{\frac{1}{2}\| |z|-b\|^2+ \left< \lambda,  z-y\right >+\frac{\rho}{2} \|z-y\|^2+1_\cF(y)\right\}.
 \eeqq
 The ADMM scheme consists of repeating the  following three updates to reach a fixed point $(y,z,\lambda)$:
 \begin{itemize}
 \item $y\leftarrow A^*A (z+\rho^{-1}\lambda)$;
 \item 
 $ z\leftarrow (1+\rho)^{-1}(\frac{w}{|w|}\odot b+\rho w)$ where $w=y-\rho^{-1}\lambda$;
 \item $\lambda \leftarrow \lambda+\rho (z-y)$.
 \end{itemize}
{  Similar to the RAAR reconstruction  in (\ref{eq_x}),  once a fixed point of this ADMM is obtained, the object $x$ can be computed by  $x=Ay=A(z+\rho^{-1}\lambda)$ from the $y$-update. } 
Introduce $P=A^*A$ and $P^\bot=I-P$. After eliminating $y$,
%
 the  local max-min problem reduces to 
\beqq\label{DRS_151}
\max_\lambda \min_z \left\{\frac{1}{2}\||z|-b\|^2+\frac{\rho}{2} \left\{\|P^\bot (z+\frac{\lambda}{\rho})\|^2-\|\frac{\lambda}{\rho}\|^2\right\}\right\}.
\eeqq
 
 Algebraic computations on ADMM scheme yield   the fixed-point condition of DRS.
 \begin{prop} Denote  $\mu:=\lambda/\rho$.
 Let $(z,\mu)$ be a fixed  point of $\rho$-DRS. Then \beqq\label{cond_mag} z+\rho \mu=[z-\mu]_\cZ=[z]_\cZ,\; 
 P\mu=0 \textrm{  and }P^\bot z=0. 
 \eeqq
 \end{prop}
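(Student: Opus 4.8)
The plan is to work directly from the ADMM scheme for $\rho$-DRS and impose the fixed-point requirement that each update leaves $(y,z,\lambda)$ (equivalently $(z,\mu)$ with $\mu=\lambda/\rho$) unchanged. First I would write the three updates at a fixed point: from the $y$-update, $y = A^*A(z+\mu) = P(z+\mu)$; from the $\lambda$-update, $\lambda = \lambda + \rho(z-y)$, which forces $z = y$; and from the $z$-update, $z = (1+\rho)^{-1}(\tfrac{w}{|w|}\odot b + \rho w)$ with $w = y - \mu$.

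Next I would exploit $z=y=P(z+\mu)$. Since $P$ is an orthogonal projector, $z\in\mathrm{Range}(A^*)$, hence $P^\bot z = 0$ and $Pz = z$. Writing $z = P(z+\mu) = Pz + P\mu = z + P\mu$ then gives $P\mu = 0$ immediately. This disposes of the two projection identities in~(\ref{cond_mag}).

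For the magnitude identity, I would substitute $y=z$ into $w = y - \mu = z - \mu$, so the $z$-update reads $(1+\rho)z = \tfrac{w}{|w|}\odot b + \rho w$ with $w = z-\mu$, i.e. $(1+\rho)z - \rho(z-\mu) = \tfrac{z-\mu}{|z-\mu|}\odot b$, which simplifies to $z + \rho\mu = [z-\mu]_\cZ$. It remains to see that $[z-\mu]_\cZ = [z]_\cZ$, i.e. that $z-\mu$ and $z$ have the same componentwise phase. Here I would use $P\mu=0$ and $P^\bot z=0$: these say $\mu$ and $z$ lie in orthogonal subspaces, but more usefully, from $z+\rho\mu=[z-\mu]_\cZ$ the vector $z+\rho\mu$ has the same phase as $z-\mu$; combining $z+\rho\mu$ (which is $z$ perturbed in the $P\mu=0$ direction, hence... ) — the cleanest route is to note $z - \mu$ and $z+\rho\mu$ are colinear componentwise (both equal a positive multiple of $[z-\mu]_\cZ$ after the update balances), and a short computation with the real/imaginary parts, together with the constraint that $z+\rho\mu$ is the $z$-iterate which by the first bullet equals $y\in\mathrm{Range}(A^*)$ while $\rho\mu\in\mathrm{Range}(A^*)^\bot$, pins down $[z-\mu]_\cZ = [z]_\cZ$.

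The main obstacle I anticipate is precisely this last step: the identities $P\mu=0$, $P^\bot z=0$, and $z+\rho\mu=[z-\mu]_\cZ$ come out almost mechanically, but showing the phase of $z-\mu$ coincides with that of $z$ (so that $[z-\mu]_\cZ=[z]_\cZ$) requires care — one must use that $z$ and $\mu$ are orthogonal in the $\langle\cdot,\cdot\rangle=\Re(\cdot^*\cdot)$ sense componentwise is \emph{not} automatic, so instead I would argue via the update itself: $z+\rho\mu = (1+\rho)^{-1}(\tfrac{w}{|w|}\odot b+\rho w)$ is a positive-coefficient combination of $\tfrac{w}{|w|}\odot b$ and $w=z-\mu$, both having phase $[z-\mu]_\cZ$, so $z+\rho\mu$ has phase $[z-\mu]_\cZ$; then decompose along $\mathrm{Range}(A^*)\oplus\mathrm{Range}(A^*)^\bot$ using $Pz=z$, $P\mu=0$ to conclude $z$ itself has phase $[z-\mu]_\cZ$, whence $[z]_\cZ=[z-\mu]_\cZ$. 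I expect this to be the only place needing genuine thought; everything else is bookkeeping.
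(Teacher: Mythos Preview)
Your derivation of $z=y$, $P^\bot z=0$, $P\mu=0$, and $z+\rho\mu=[z-\mu]_\cZ$ is correct and is exactly the route one would take (the paper itself skips the proof). The gap is in the final step $[z-\mu]_\cZ=[z]_\cZ$, where both arguments you propose fail.

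First, the identity you write, $z+\rho\mu=(1+\rho)^{-1}\bigl(\tfrac{w}{|w|}\odot b+\rho w\bigr)$, is an algebraic slip: the right-hand side equals $z$, not $z+\rho\mu$. Second, and more importantly, the orthogonal decomposition $z\in\mathrm{Range}(A^*)$, $\mu\in\mathrm{Range}(A^*)^\bot$ is a \emph{global} splitting of $\IC^N$ and carries no componentwise phase information; knowing $z+\rho\mu=b\odot u$ with $Pz=z$ and $P\mu=0$ does not let you read off the phase of $z_j$ for any individual $j$.

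The clean fix is the one you almost wrote down but applied to the wrong vector: look at $(1+\rho)z$ directly. With $w=z-\mu$ the $z$-update at the fixed point reads
\[
(1+\rho)z=[w]_\cZ+\rho w=\bigl(b+\rho|w|\bigr)\odot\frac{w}{|w|},
\]
a componentwise positive multiple of $w/|w|$. Hence $z/|z|=w/|w|=(z-\mu)/|z-\mu|$, giving $[z]_\cZ=[z-\mu]_\cZ$. Note that this step uses only the $z$-update and not the projection identities at all.
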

We skip the proof of  (\ref{cond_mag}).  Note that  the condition  implies 
that  the vector  $z$ shares the same phase vector  $u:=z/|z|$ with $z-\mu$, and
 $[z]_\cZ$ has the $(P,P^\bot)$-decomposition $[z]_\cZ=z+\rho \mu$,
 where $P\mu=0$ and $P^\bot z=0$. 
  Hence,  $\mu/z\in \IR^N$ is a real vectors with  bounds,  $-\rho^{-1} \le z^{-1}\odot \mu\le 1$.

  Next we  derive conditions for  local saddles of $L$ in (\ref{DRS_151}). 
 \begin{prop}\label{saddleDRS}
 Let $(z,\mu)$ be a local saddle of $L$ in  (\ref{DRS_151}). Then  
 the first-order optimal condition is
 \beqq\label{eq_154_1}
  z+\rho \mu=[z]_\cZ,\; P\mu=0, \; P^\bot z=0.
 \eeqq 
 When $\rho>0$,  the concavity of $L$ in $\lambda$ is obvious. Let $K:=\Re(\diag(\bar u) P \diag(u))$ and $ u=z/|z|.$
 The second-order necessary  condition of $z$ in  (\ref{DRS_151}) is  
 \beqq\label{eq_154_2}
 (\rho+1)I-\frac{b}{|z|}\succeq \rho K.
 \eeqq

 \end{prop}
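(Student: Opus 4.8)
The plan is to derive both the first-order and second-order conditions directly from the definition of a local saddle applied to $L$ in (\ref{DRS_151}), mimicking the perturbation analysis used in Prop.~\ref{convex1} and Prop.~\ref{convex4}. Since $\rho>0$, the function $L$ is strictly concave in $\lambda$ (the $\lambda$-dependent part is $\frac{\rho}{2}\|P^\bot(z+\lambda/\rho)\|^2-\frac{1}{2\rho}\|\lambda\|^2$, and one checks the Hessian in $\lambda$ is negative definite), so as in Section~\ref{sec:3} the local-max-min characterization collapses to the local-minimizer characterization in $z$ together with stationarity in $\lambda$. Thus I would first record the $\lambda$-optimality: $\nabla_\lambda L=0$ gives $P^\bot(z+\lambda/\rho)=\lambda/\rho$, i.e. $P^\bot z=\lambda/\rho-P^\bot(\lambda/\rho)$; combined with $P\lambda=0$ (equivalently $\mu=\lambda/\rho$ lies in the range of $P^\bot$, forced by the same equation after splitting into $P$ and $P^\bot$ components) this yields $P^\bot z=0$ and $P\mu=0$. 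Feeding this back, the $z$-stationarity of the loss $\frac12\||z|-b\|^2+\frac{\rho}{2}\|P^\bot(z+\mu)\|^2$ on $z$ reproduces the remaining fixed-point relation $z+\rho\mu=[z]_\cZ$, exactly (\ref{eq_154_1}); this part is essentially the computation already skipped in the proof of (\ref{cond_mag}), now read as a first-order optimality statement rather than a fixed-point identity.

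For the second-order condition, I would perturb $z\to z\odot\exp(i\theta)$ with $\theta\in\IR^N$ small, write $\xi:=b\odot\theta$ (using $|z|=b$ at the critical point by (\ref{eq_154_1})), and Taylor-expand $L$ to second order in $\xi$ as in Prop.~\ref{convex1}. The term $\frac12\||z\odot e^{i\theta}|-b\|^2$ is identically zero since $|z\odot e^{i\theta}|=|z|=b$, so it contributes nothing — this is the key place where the structure differs from the $F$-analysis. Wait: that cannot be right, because then the magnitude constraint would be vacuous; the point is that $z$ is \emph{not} constrained to $\cZ$ here, so one must perturb in the full ambient direction, $z\to z+\delta$ with $\delta\in\IC^N$ arbitrary, or more conveniently $z\to z\odot(1+i\theta)$ to leading order is the \emph{wrong} parametrization. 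I would instead expand $\frac12\||z+\delta|-b\|^2$ directly: writing $\delta=z\odot(s+it)$ with $s,t\in\IR^N$, the magnitude term expands as $\frac12\|b\odot s\|^2+o(|\delta|^2)$ plus first-order terms, giving Hessian contribution $\diag(b^2)$ in the $s$-direction (radial) — but $\rho K$ and $(\rho+1)I-b/|z|$ in (\ref{eq_154_2}) are indexed differently, so the correct reduction uses the substitution $\xi=|z|\odot\theta$ for the phase variation and a separate radial variation, ultimately yielding that the full Hessian of $L$ in $z$, restricted appropriately, is $(\rho+1)I-\operatorname{diag}(b/|z|)-\rho K\succeq0$. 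Concretely I would: (i) compute $\nabla_{zz}$ of the loss in the standard real-coordinate identification $\IC^N\cong\IR^{2N}$, (ii) use $P^\bot z=0$ and $z+\rho\mu=[z]_\cZ$ to simplify the cross terms, (iii) diagonalize in the phase/magnitude frame attached to $u=z/|z|$ so that $\Re(\diag(\bar u)P\diag(u))=K$ appears, and read off (\ref{eq_154_2}).

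The main obstacle I anticipate is the bookkeeping in step (ii)–(iii): the loss $\frac12\||z|-b\|^2$ is smooth but not quadratic, so its Hessian at the critical point involves the $\diag(b/|z|)$ correction precisely because $|z|=b$ there, and one must be careful that the $P^\bot$-penalty's Hessian ($\rho P^\bot$ in ambient coordinates) combines with it correctly after using $\mu=\lambda/\rho$ and the fixed-point relation to eliminate $\lambda$ — i.e. one is really taking the Hessian of the \emph{reduced} objective $\min_z L(z,\lambda)$ evaluated along the saddle, and must confirm that the elimination of $\lambda$ (which is exact and linear, so introduces no Schur-complement correction beyond replacing $\rho P^\bot$ by itself) is legitimate. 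I expect the conclusion to be that the necessary condition $\nabla_{zz}L\succeq0$, after this reduction and passing to the frame diagonalizing phase versus magnitude perturbations, is exactly $(\rho+1)I-\diag(b/|z|)\succeq\rho K$, which is (\ref{eq_154_2}); a strict-inequality version would give the sufficient condition, but the statement only claims necessity.
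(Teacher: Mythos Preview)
Your first-order argument is essentially the paper's: differentiate in $\lambda$ to get $P^\bot(z+\mu)=\mu$, hence $P\mu=0$ and $P^\bot z=0$; then $\nabla_z L=0$ reads $(z/|z|)\odot(|z|-b)+\rho P^\bot(z+\mu)=0$, which with the previous relation gives $z+\rho\mu=[z]_\cZ$.

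For the second-order part there is a concrete error that propagates. You assert $|z|=b$ at the critical point ``by (\ref{eq_154_1})'', but (\ref{eq_154_1}) says $z+\rho\mu=[z]_\cZ$, i.e.\ $|z|+\rho\,\bar u\odot\mu=b$ componentwise, so $|z|=b-\rho\,\bar u\odot\mu$ and in general $|z|\neq b$. (Indeed the proposition immediately following this one records $|z|=Kb$.) This is exactly why the factor $b/|z|$ survives in (\ref{eq_154_2}); under your assumption it would collapse to $(\rho+1)I-I\succeq\rho K$, i.e.\ $I\succeq K$, which is always true and not the content of the claim. Your self-correction --- that $z$ is unconstrained so one must perturb in the full $\IC^N$ --- is the right move, but the reason you give is not quite it: even along the torus the loss $\tfrac12\||z|-b\|^2$ is \emph{not} identically zero at the saddle, precisely because $|z|\neq b$.

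The paper's route for the second-order condition is also simpler than the full Hessian-and-diagonalization plan you sketch. It perturbs $z\to z+\epsilon$ with $\epsilon\in\IC^N$ arbitrary, uses the scalar expansion
\[
|z+\epsilon|=|z|\Bigl(1+\Re(\epsilon/z)+\tfrac12\Im(\epsilon/z)^2\Bigr)+o(\|\epsilon\|^2)
\]
to obtain the second-order necessary inequality
\[
\rho\langle\epsilon,P^\bot\epsilon\rangle+\|\epsilon\|^2-\Bigl\langle \tfrac{b}{|z|},\,\Im(\epsilon\odot\bar u)^2\Bigr\rangle\ge 0\qquad\forall\,\epsilon\in\IC^N,
\]
and then simply \emph{tests} this against the phase-direction family $\epsilon=ic\odot u$, $c\in\IR^N$. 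In that direction $\Im(\epsilon\odot\bar u)=c$ and $\langle\epsilon,P^\bot\epsilon\rangle=\langle c,(I-K)c\rangle$, so the inequality rearranges to $(\rho+1)\|c\|^2-\langle b/|z|,c^2\rangle\ge\rho\langle c,Kc\rangle$, which is (\ref{eq_154_2}). No frame diagonalization or Schur-complement bookkeeping is needed: the matrix inequality is extracted by choosing the right one-parameter family of test vectors, not by assembling the full real $2N\times 2N$ Hessian.
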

  \begin{proof}
The optimality of $\mu$ in (\ref{DRS_151}) is 
 \beqq \label{eq_153}P^\bot (z+\mu)=\mu, \; \textrm{ which implies }  \; 
 P^\bot z=0 \textrm{ and } P\mu=0.
 \eeqq 
From the  derivative  of $L$ with respect to $z$, 
 the $z$-optimality is
 \[
 \frac{z}{|z|}\odot (|z|-b)+\rho P^\bot (z+\mu)=0, 
 \textrm{ i.e., } 
 z-[z]_\cZ+\rho P^\bot (z+ \mu)=0.
 \]
 Together with (\ref{eq_153}), we have
 $
 z+\rho \mu=[z]_\cZ.
 $
Next,  we derive the second-order necessary condition of $z$.
Consider a perturbation $z\to z+\epsilon$  with $\epsilon\in \IC^N$. From
 $
|z+\epsilon|=|z| \left(1+\Re(\frac{\epsilon}{z}) +\frac{1}{2}\Im(\frac{\epsilon}{z})^2\right)+o(\epsilon^2),
$
and
\begin{eqnarray}
&&\||z+\epsilon|-b\|^2-\||z|-b\|^2\\
&=&2\left< z-b\odot \frac{z}{|z|},   \epsilon\right> +\|\epsilon\|^2 -\left< \frac{b}{|z|},  \Im(\epsilon\odot \bar u)^2\right>+o(\|\epsilon\|^2),\label{eq155}
\end{eqnarray}
we have  the second-order condition of $L$,
\beqq\label{eq_159}
\rho \left< \epsilon, P^\bot\epsilon \right> +\|\epsilon\|^2  -\left< \frac{b}{|z|},  \Im(\epsilon\odot \bar u)^2\right> \ge  0.
\eeqq
Taking    $\epsilon=ic\odot u$ for  $c\in\IR^N$ yields
  \beqq\label{DRS_c}
 \|c\|^2\ge \frac{1}{\rho+1}  \left<  \frac{b}{|z|},  c^2\right>+\frac{\rho}{\rho+1} \left<c,  Kc\right>.
 \eeqq

\end{proof}

 Next, we show that a local saddle $(z,\mu)$ is always 
 a fixed point of DRS. 
\begin{prop} 
If $(\mu, z)$ is a local max-min point in (\ref{DRS_151}), then $z$ is a fixed point of DRS. \end{prop}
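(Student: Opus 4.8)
The plan is to show that a local max-min point $(\mu,z)$ of $L$ in~(\ref{DRS_151}) satisfies every equation in the fixed-point characterization~(\ref{cond_mag}). Since Prop.~\ref{saddleDRS} already establishes the first-order optimality~(\ref{eq_154_1}), namely $z+\rho\mu=[z]_\cZ$, $P\mu=0$ and $P^\bot z=0$, almost all the work is done; what remains is only to match these identities against the definition of a DRS fixed point. First I would recall the DRS updates applied at the candidate point $(y,z,\mu)$ with $\mu=\lambda/\rho$: the eliminated variable is $y=P(z+\mu)$, which by $P\mu=0$ and $P^\bot z=0$ reduces to $y=Pz=z$. Then the $z$-update reads $z\leftarrow(1+\rho)^{-1}([w]_\cZ+\rho w)$ with $w=y-\mu=z-\mu$.

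Next I would verify the magnitude relation $[z-\mu]_\cZ=[z]_\cZ$. Because $\mu/z\in\IR^N$ (this follows from $P^\bot z=0$ and $P\mu=0$ forcing $z+\rho\mu$, $z$ and $\mu$ to be aligned, as already observed right after~(\ref{cond_mag}) — alternatively from the first-order condition $z+\rho\mu=[z]_\cZ$ which makes $\rho\mu=[z]_\cZ-z$ a real multiple of $u=z/|z|$ componentwise), the vector $z-\mu$ has the same componentwise phase as $z$, so $[z-\mu]_\cZ=[z]_\cZ$. Plugging $w=z-\mu$ into the $z$-update and using $[w]_\cZ=[z]_\cZ=z+\rho\mu$ gives
\[
(1+\rho)^{-1}\bigl([w]_\cZ+\rho w\bigr)=(1+\rho)^{-1}\bigl((z+\rho\mu)+\rho(z-\mu)\bigr)=(1+\rho)^{-1}(1+\rho)z=z,
\]
so the $z$-update leaves $z$ fixed. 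Finally the $\lambda$-update $\lambda\leftarrow\lambda+\rho(z-y)=\lambda+\rho(z-z)=\lambda$ is trivially fixed since $y=z$. Hence $(y,z,\mu)$ is a fixed point of the DRS iteration, and in particular~(\ref{cond_mag}) holds.

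I do not expect a genuine obstacle here; the statement is essentially a bookkeeping corollary of Prop.~\ref{saddleDRS}. The one point requiring a little care is justifying that $z-\mu$ shares the phase vector $u=z/|z|$ so that $[z-\mu]_\cZ=[z]_\cZ$ — this is where the first-order conditions $P^\bot z=0$, $P\mu=0$ and $z+\rho\mu=[z]_\cZ$ get used together, and it must be invoked before one can simplify the $z$-update. Everything else is substitution into the three ADMM update rules.
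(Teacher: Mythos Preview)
Your argument has a genuine gap at the key step $[z-\mu]_\cZ=[z]_\cZ$. From the first-order conditions you correctly deduce $\mu/z\in\IR^N$, i.e.\ $z-\mu=c\odot u$ for some real vector $c$. But ``real multiple'' is not the same as ``same phase'': if some $c_i<0$ then $(z-\mu)_i/|(z-\mu)_i|=-u_i$, and $[z-\mu]_\cZ$ differs from $[z]_\cZ$ in that coordinate. Concretely, $c=(1+\rho^{-1})|z|-\rho^{-1}b$, so you need $(\rho+1)|z|\ge b$ componentwise, and nothing in~(\ref{eq_154_1}) forces this. (Your appeal to the remark after~(\ref{cond_mag}) is circular: the bound $z^{-1}\odot\mu\le 1$ there is \emph{derived from} the fixed-point identity $[z-\mu]_\cZ=[z]_\cZ$, which is exactly what you are trying to prove.)

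The paper closes this gap using the second-order necessary condition~(\ref{eq_154_2})/(\ref{DRS_c}) from Prop.~\ref{saddleDRS}: plugging $c=e_i$ into~(\ref{DRS_c}) and dropping the nonnegative term $\frac{\rho}{\rho+1}K_{ii}$ yields $1\ge\frac{1}{\rho+1}\,b_i/|z_i|$, i.e.\ $(\rho+1)|z|\ge b$. This is precisely the sign condition $\bar u\odot(z-\mu)\ge 0$ needed for $[z-\mu]_\cZ=[z]_\cZ$. Once you insert this use of the second-order condition, the rest of your verification of the three ADMM updates is fine. So the proposition is not merely bookkeeping from the first-order conditions; the local-min assumption in $z$ (not just stationarity) is essential.
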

\begin{proof}
At each stationary  point  $z$ of DRS, we have $|z|=Kb$ and  $[z]=b\odot u$ of DRS.
 Taking $c=e_i$ in (\ref{DRS_c}) yields
 \[ (\rho+1) Kb=(\rho+1)|z|\ge b, \; i.e., \;
 \bar u\odot (z-\mu)=Kb-\mu\odot \bar u\ge 0, \] which implies  $[z-\mu]_\cZ=[z]_\cZ$. Together with the $(P, P^\bot)$-decomposition, $[z]_\cZ=z+\rho \mu$, we have the fixed point condition.
\end{proof}

From the second-order condition in (\ref{eq_154_2}), we expect  that DRS with  smaller $\rho$  yields  a  stronger  screening-out ability. 
The next remark illustrates  the screening-out similarity  between RAAR and DRS in the case  $\rho$ close to $0$.
\begin{rem}\label{4.13}
 Roughly, for $\rho$ close to $0$, a local saddle  at some phase vector $u$ of $\beta$-RAAR  would be   a local saddle at the same phase vector  $u$ of $\rho$-DRS, if $\rho$ and $\beta$ satisfy   $\beta^{-1}=\rho+1$.
Indeed,  the second-order necessary condition for RAAR function in (\ref{Ln3}) is given by 
\beqq
K^\bot-\diag(\Re(q))
= -\frac{\beta}{1-\beta}I-K+\frac{1}{1-\beta}(\frac{Kb}{b})\succeq 0.
\eeqq
That is, for any nonzero  $\xi\in\IR^N$,
\beqq\label{RAAR_c}
\left< \frac{Kb}{b}, \xi^2\right>\ge \beta\|\xi\|^2+(1-\beta) \xi^\top K\xi.
\eeqq
On the other hand, for DRS, 
 replacing $c$ with 
$\pm (Kb/b)^{1/2}\odot \xi$ and $|z|=Kb$ in (\ref{DRS_c}) gives
\beqq
\left< \frac{Kb}{b}, \xi^2\right>\ge \frac{1}{\rho+1}\|\xi\|^2+\frac{\rho}{\rho+1} \left< (\frac{Kb}{b})^{1/2}\odot  \xi,  K ((\frac{Kb}{b})^{1/2}\odot \xi)\right>.
\eeqq
Comparing with  (\ref{RAAR_c}), 
(\ref{DRS_c}) is almost identical to (\ref{RAAR_c}) under
 $\beta^{-1}=\rho+1$, if
 $\rho$ is close to $0$ and we   ignore the difference of the second terms of their right hand side.   \end{rem}

 \subsubsection{Simulation of Gaussian matrices}
We provide one simulation to present  the screening-out  effect   for $\beta$-RAAR and  $\rho$-DRS.
Generate  Gaussian matrices $A$ with size $n\times N$, $n=100$, $N/n=3, 3.5, 4, 4.5$ and $ 5$, respectively. For simplicity, generate  noise-free data $b=|A^* x_0|$ from some $x_0$. Apply  $\beta$-RAAR and $\rho$-DRS to reconstruct  phase retrieval  solutions under a set of parameters $\beta, \rho$, respectively.  Here, we test  \beqq
 \rho= 1,1/2, 1/3, 1/4, \ldots, 1/10 \textrm{  and  } \beta=1/2, 2/3, 3/4, \ldots, 10/11.\eeqq
  Figures~\ref{expRAAR} show  the success rate of  reaching a global solution for each parameter value (among $40$ trials with   random initializations).  
  For $\rho$ close to $0$ or $\beta$ close to $1$,  $\beta$-RAAR with   \[
 \beta=\frac{\rho^{-1}}{1+\rho^{-1}}
 \]
gives a similar empirical  performance as $\rho$-DRS, although RAAR performs slightly better.  For instance,  as $\beta\ge 0.8$  or $\rho\le 1/4$,
with   success rates higher than $70\%$,
 $\beta$-RAAR and $\rho$-DRS algorithms both can reconstruct  a global solution in the case  with $N/n\ge 4$. 
These empirical  results are consistent with the theoretical analysis in Remark~\ref{4.13}.
%
 \begin{figure}
 \begin{center}
     \includegraphics[width=0.45\textwidth]{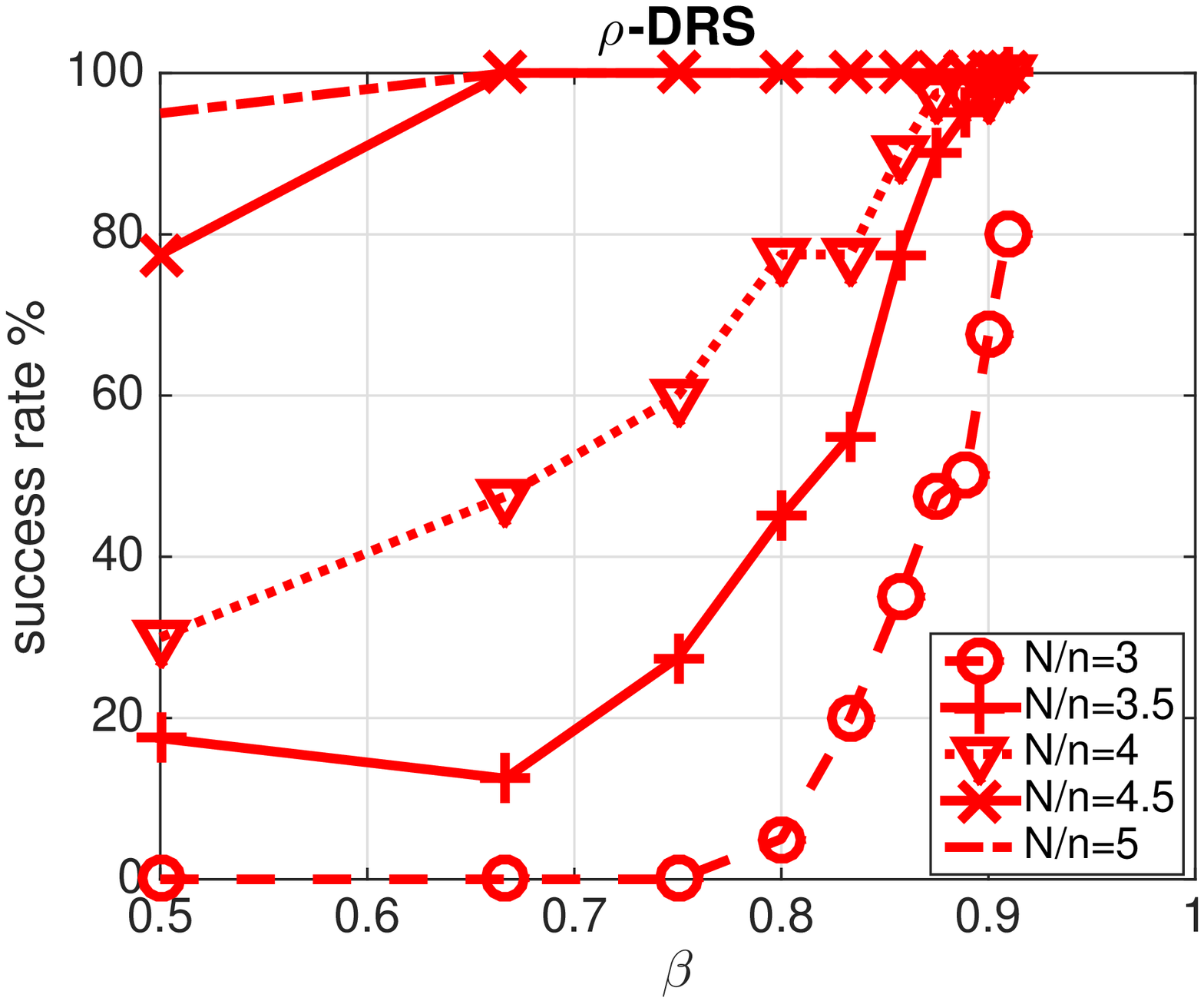}   
        \includegraphics[width=0.45\textwidth]{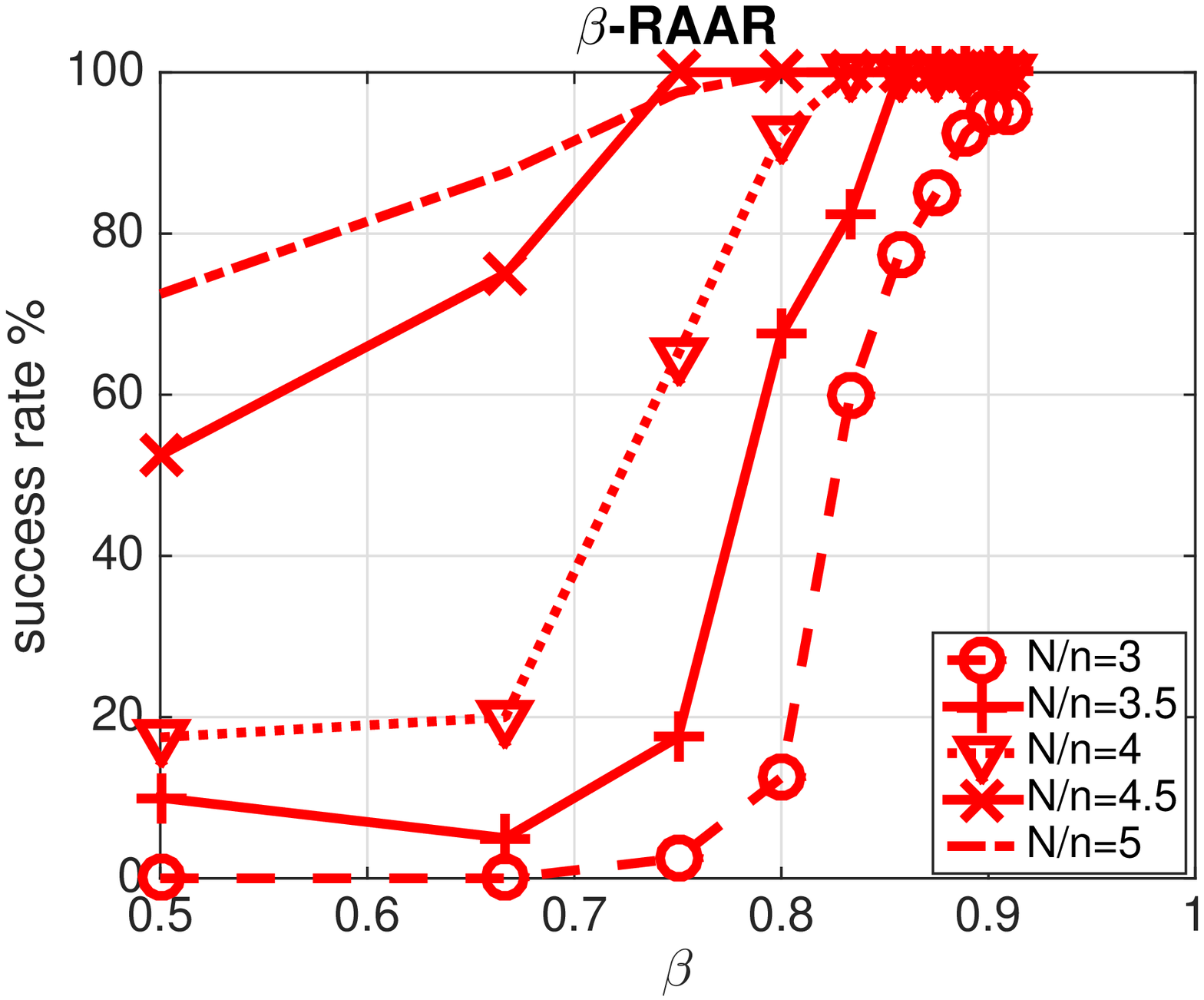} 
          \end{center}
   \caption{   Right and left subfigures show the success rate under $\beta$-RAAR algorithm and  $\rho$-DRS algorithm with $ \beta=\rho^{-1}(1+\rho^{-1})^{-1}$, respectively. }  \label{expRAAR}
    \end{figure}

{ 
\subsection{ Coded diffraction patterns}

The following experiments
present convergence behavior of RAAR on coded diffraction patterns.
 Consider
$1\frac{1}{2}$ coded diffraction patterns with oversampling, i.e., one coded pattern and one uncoded pattern as used in~\cite{CHEN2018665}. For test images $x_0$,  we use  the Randomly Phased Phantom(RPP)
 $x_0=p\odot \mu_0$, where $\mu_0:=e^{i\phi}$ and $\phi$ are i.i.d. uniform random variables over $[0,2\pi]$.  The size  is $128\times 128$, including the margins.
Here, 
 we randomize the original phantom $p$ (in the left of Fig.~\ref{phantom}) in order to make its reconstruction more challenging.  A random object such as RPP is more difficult to recover than a deterministic object.

 \begin{figure}
 \begin{center}
     \includegraphics[width=0.3\textwidth]{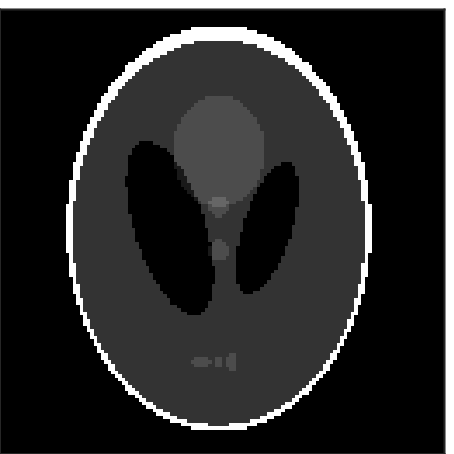}   
     \includegraphics[width=0.3\textwidth]{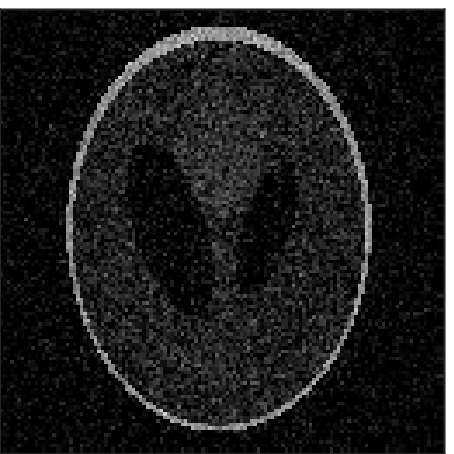}   
     \includegraphics[width=0.3\textwidth]{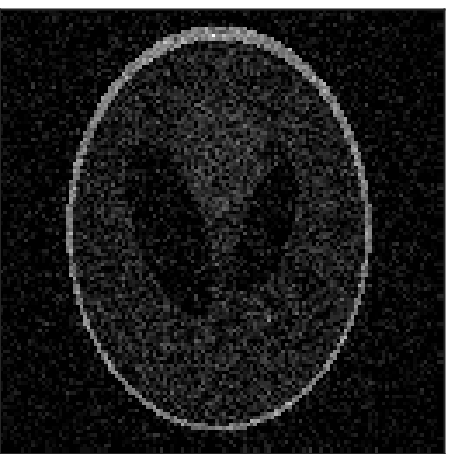}   
          \end{center}
   \caption{   Phantom image $p$ without phase randomization (left);  { Images reprsents the null vector initialization $\Re(x_{null}\odot \overline{\mu_0})$  in the noiseless case(middle) and in the noisy case(right).}}  \label{phantom}
    \end{figure}

Theorem~\ref{thm3.2} states the existence and strictly local minimizer and 
Theorem ~\ref{Main1} indicates that the existence of 
 a local saddle replies on a sufficiently large penalty parameter. 
 The following experiment validates
  RAAR convergence  to the local saddle under proper selection on $\beta$.

Empirically,   $\beta$-RAAR with  large $\beta$ can easily diverge, but   $\beta$-RAAR with  small $\beta$ can easily get stuck (not necessarily converged) near distinct  critical points on $\cZ$. 
To  demonstrate the effectiveness of RAAR, 
 we shall make two adjustments on application of  $\beta$-RAAR. First,  to alleviate the stagnation  at far critical solutions,  we  employ  the  null vector method\cite{Chen2017a}, which is one spectral initialization,  to generate an initialization of RAAR. See the middle and right subfigures in Fig.~\ref{phantom} for the initialization. Second, 
 to reach a local saddle within 600 RAAR iterations, we vary the parameter  $\beta$ along a $\beta$-path, starting  from some initial value  and then decreases to $0.5$, shown in
Fig.~\ref{expRAARbeta}. Each path consists of to phases: (i) $\beta$ remains one constant value selecting from $0.95, 0.9, 0.8, 0.7$ and $ 0.6$ within the first $300$ iterations; (ii) $\beta$  decreases to $0.5$ piecewise linearly within the second $300$ iterations. 
  
 \begin{figure}[htp]
 \begin{center}
                         \includegraphics[width=0.5\textwidth]{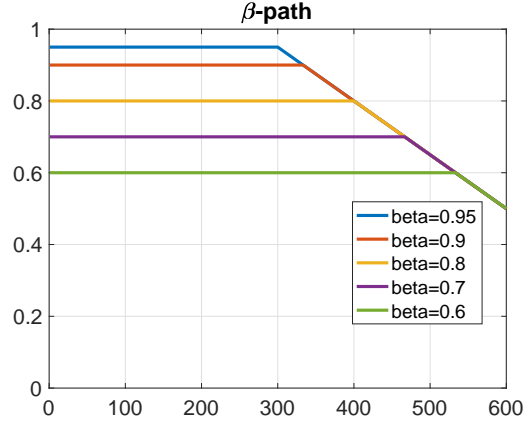}  
                     \end{center}
   \caption{ The corresponding $\beta$ value used within  $600$ RAAR iterations of five different $\beta$-paths. }  \label{expRAARbeta}
    \end{figure}

Conduct  four   experiments to examine $\beta$-RAAR along five $\beta$-paths: 
\begin{enumerate}
\item[(a)] Noiseless data, $b=|A^* x_0|$ with $A$ defined in (\ref{eq3.26}).
Use  the null vector method $x_{null}$ as one initial vector for $\beta$-RAAR, i.e., 
\beqq\label{ini}
z_{1}=[ A^* x_{null} ]_\cZ,\; \lambda_{1}=A^* x_{null}-z_{1}.
\eeqq
\item[(b)]  Noiseless data. RAAR with random initialization. 
\item[(c)]  Noisy data. RAAR with null vector initialization as in (\ref{ini}). 
\item[(d)] Noisy data. RAAR with random initialization. 
\end{enumerate} 
In (c) and (d), the source of noise is the counting statistics~\cite{Thibault2012}, i.e., each entry of the squared measurement $b^2$ follows a Poisson distribution, 
{ 
\beqq
b^2\sim Poisson(\kappa |A^* x_0|^2),\; \kappa>0. 
\eeqq  In  the  RPP experiment,  the parameter $\kappa$ is chosen so that the   noise level   is $\| b-|A^* x_0|\|/\|b\|\approx 0.18$.
}  

\subsubsection{Performance metrics} Results
in the case (a,b) and (c,d)
 are reported in figure~\ref{expRAARcurves2} and  figure~\ref{expRAARcurves2new}, respectively.
Each row 
shows  the performance metrics of 
$\beta$-RAAR iterations $\{w_k\}$. Here, 
$z,\lambda$ are computed from $w$ according to   \beqq \{z_{k+1}:=[w_k]_\cZ, \lambda_{k+1}:=w_k-z_{k+1}\}.
\eeqq
From (\ref{eq_x}), the reconstruction of the object $x$ is estimated by $A(z_{k+1}-\lambda_{k+1})$.
\begin{itemize}
\item Residual: $\|A_\bot z\|/\|b\|$.
\item Norm of derivative: 
The Wirtinger derivative  of the objective $F$ in the $\lambda$-direction is 
\beqq
-\{\beta A_\bot^*(A_\bot (z-\lambda)) +\lambda\}.
\eeqq
When   RAAR converges to one local saddle, the derivative norm would be $0$.  
 
The   norm  
\[
\ID_\lambda(z,\lambda):=\left (\|A_\bot ((1-\beta) \lambda+\beta z) \|^2+\|A\lambda\|^2\right )^{1/2}
\]
can be employed
to examine the quality of  convergence. (Empirically, the derivative in the $z$-direction 
 $\ID_z$ has a behavior similar to the one of $\ID_\lambda$. For simplicity, we do not report  $\ID_z$ here.)
\item Inequality ratio $\IT(z_k,\lambda_k)$.
In the noiseless setting, 
 $\IT(z,\lambda)$ is  positive, as $(z,\lambda)$ approaches a local saddle $(z_*,\lambda_*)$ with $\|A_\bot z_*\|=0$. Hence, 
 a positive ratio \[
 \IT(z,\lambda):=1+(\beta \|A_\bot z\|^2+(1-\beta) \|A_\bot \lambda\|^2+\|A\lambda\|^2)^{-1}(2\langle z, \lambda\rangle)
 \] can be used as one indicator that  
 RAAR iterates enter the attraction basin of $(z_*,\lambda_*)$. 
 \end{itemize}
  
\subsubsection{Results on noiseless measurement} In Fig.~\ref{expRAARcurves2}, 
the left column and the right column show  the metric performance in the cases (a) and (b). 

 \begin{itemize}
 \item The left column shows the result  of the 600 RAAR iterations along five $\beta$ curves with  null initialization, i.e., case (a). The null initialization is illustrated in 
 the middle of Fig.~\ref{phantom}. Based on the residual and  derivative metrics,  RAAR converges to the global solutions for
 all five  $\beta$-paths. The $\IT$ become positive after $100$ iterations, which  indicates the closeness of the null initialization  to the attraction basin.   In particular,   $\IT$ reaches   $1$ in the early iterations of the case   $\beta=0.95$. 
 
\item The right column shows  the case (b). The initialization difference between case  (a) and case (b) reflects the influence of undesired local saddles. 
 We observe two  distinct convergence behaviors.
  First,  in the case of $\beta=0.6, \beta=0.7$ and $\beta=0.8$,
 based on the metrics of the derivative  norm and the residual, RAAR fails to converge within the first $300$ iterations. As $\beta$ decreases in the second 300 iterations, the iterates tend to different local saddles. 
  Second, for the $\beta$ paths starting with    $\beta=0.9$ or $\beta=0.95$,    RAAR  successfully converge to global solutions. Their  $\IT$-values  are negative in the early 50 iterations, but quickly turn to be  positive after 100 iterations.   Fig.~\ref{expRAARcurves40} demonstrates the reconstruction.

\end{itemize}
\subsubsection{Results on noisy measurements}
The left column  in Fig.~\ref{expRAARcurves2new} demonstrates
 the metric performance of RAAR in  the noisy case (c).
 The null initialization 
 shown in the right of Fig.~\ref{phantom}, is used to reduce the chance of getting stuck at  far local saddles. 
 The  reconstructed objects after the first 300 RAAR iterations are shown in   the top row of  in Fig.~\ref{expRAARcurves4}. Even though these reconstructions  are very similar to the RPP, 
the metric $\ID_\lambda$ indicates that  these RAAR with $\beta\ge 0.7$  fail to converge within the first 300 iterations. Hence, 
we decrease $\beta$ in the second 300 iterations to obtain  local saddles. Observe that  the derivative norm in all cases  decays  to $0$. Actually, by examining  the correlation of reconstructed objects after 600 RAAR iterations,  we verify  that  these five reconstructed objects are identical up to a phase factor. 
   
   The right column  in Fig.~\ref{expRAARcurves2new}  shows   the metric performance of  the noisy case (d). Five $\beta$-RAAR tend to different residual values in the first 300 iterations. 
For large $\beta$, i.e.,  $\beta=0.95$, $\beta=0.9$ and $\beta=0.8$, 
 RAAR  produce 
 rather successful   reconstructed objects shown    the bottom row of  in Fig.~\ref{expRAARcurves4}. These RAAR do not converge within the first 300 iterations. Hence, we reduce the $\beta$ value during the second $300$ iterations.
  By examining  the correlation of reconstructed objects,  we verify that   three final reconstructions are all identical to the final reconstruction in (c). 
  For small $\beta$, RAAR could get stuck at poor solutions, e.g., 
 $\beta=0.7$ and $\beta=0.6$.
Indeed,    after the second 300 iterations, these two  RAAR converge to   non-global local solutions with larger residual values.
The above experiment results suggest that RAAR starting with  large $\beta$ typically performs better than RAAR starting with small $\beta$ in the lack of  spectral methods. In numerical simulations, we demonstrate the effectiveness of RAAR on coded diffraction patterns, where $\beta$ travels from  a large value  to $0.5$.

 \begin{figure}[htp]
 \begin{center}
                        \includegraphics[width=0.45\textwidth]{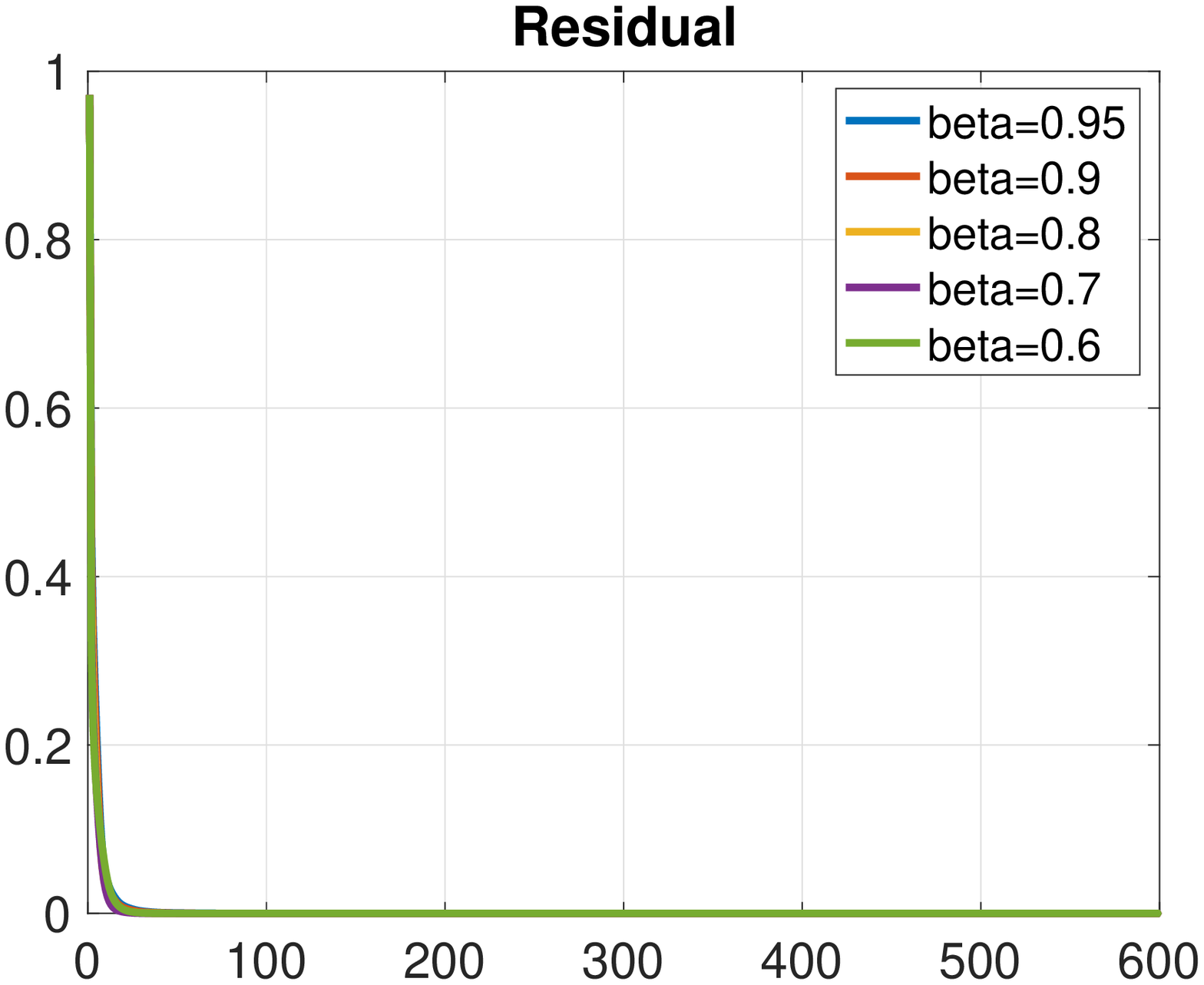}  
            \includegraphics[width=0.45\textwidth]{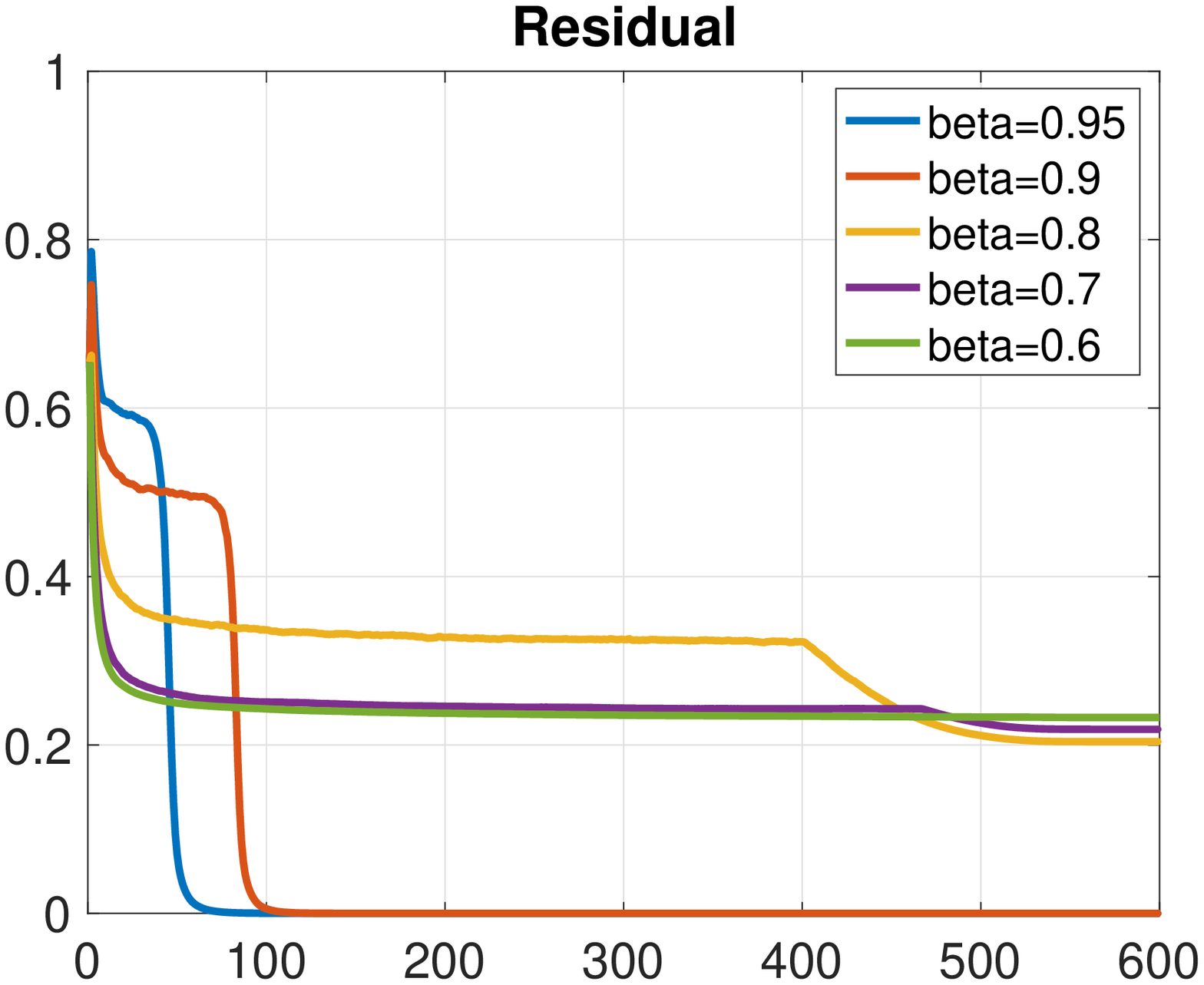} \\
            \includegraphics[width=0.45\textwidth]{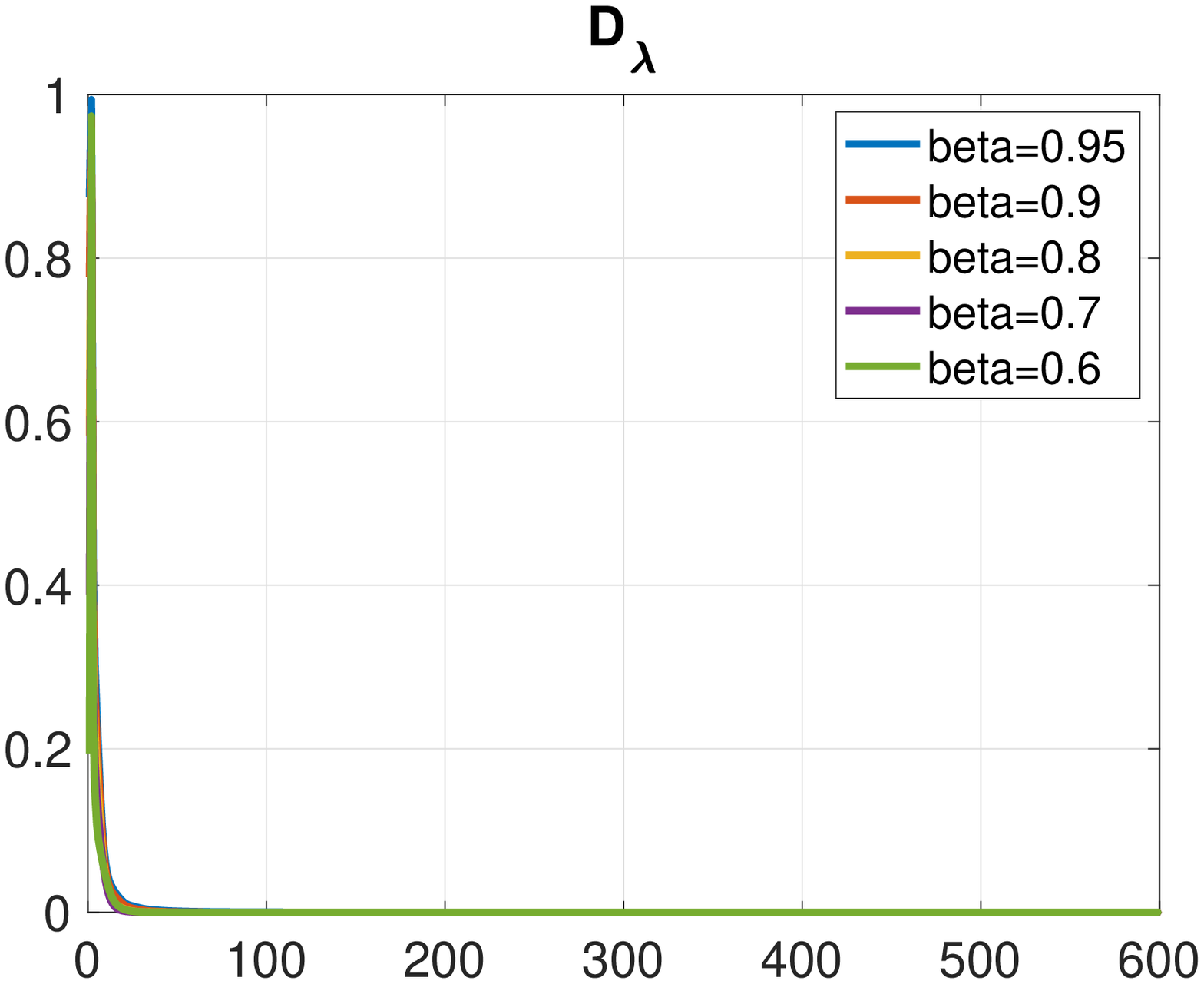}  
            \includegraphics[width=0.45\textwidth]{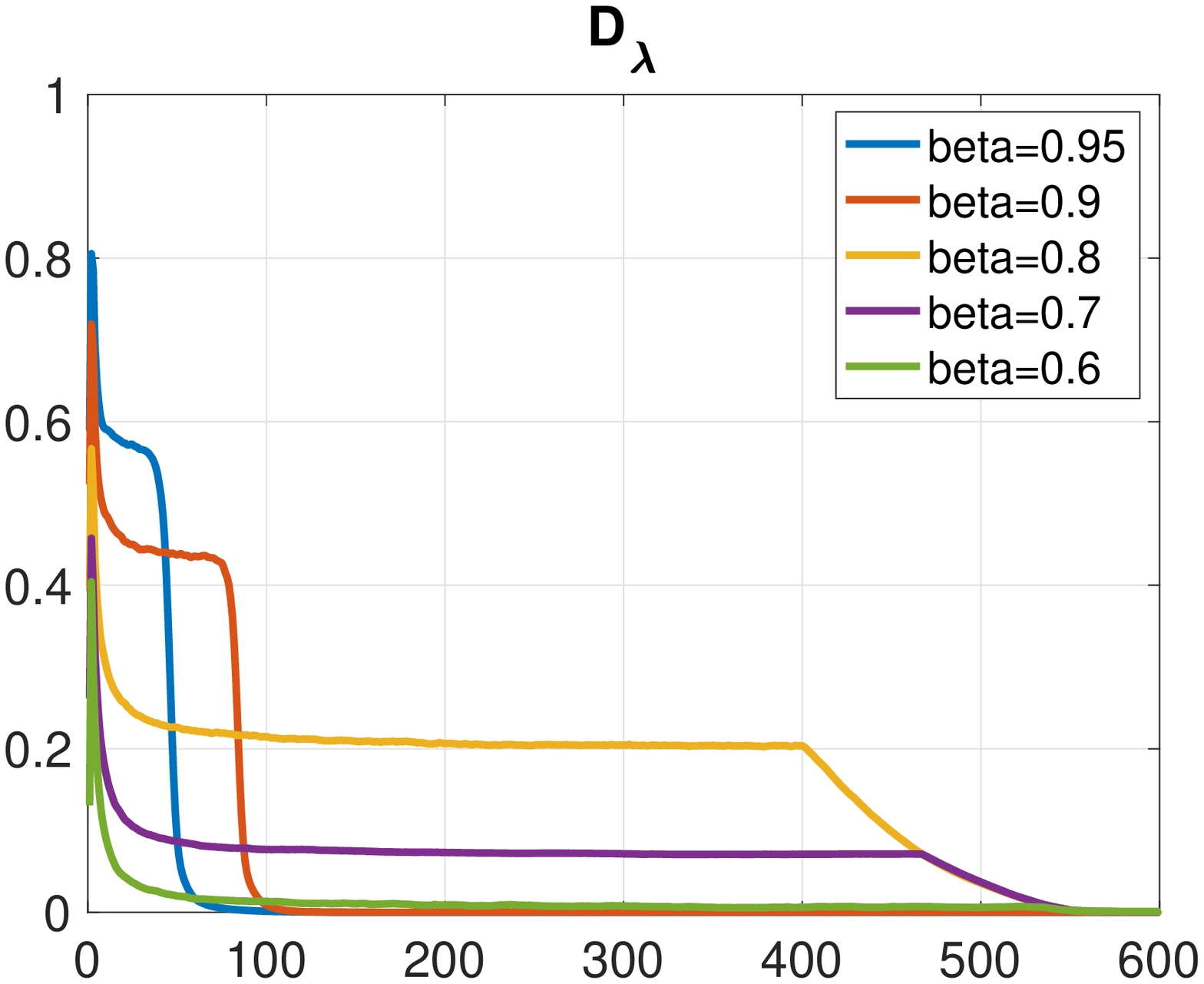}\\
   \includegraphics[width=0.45\textwidth]{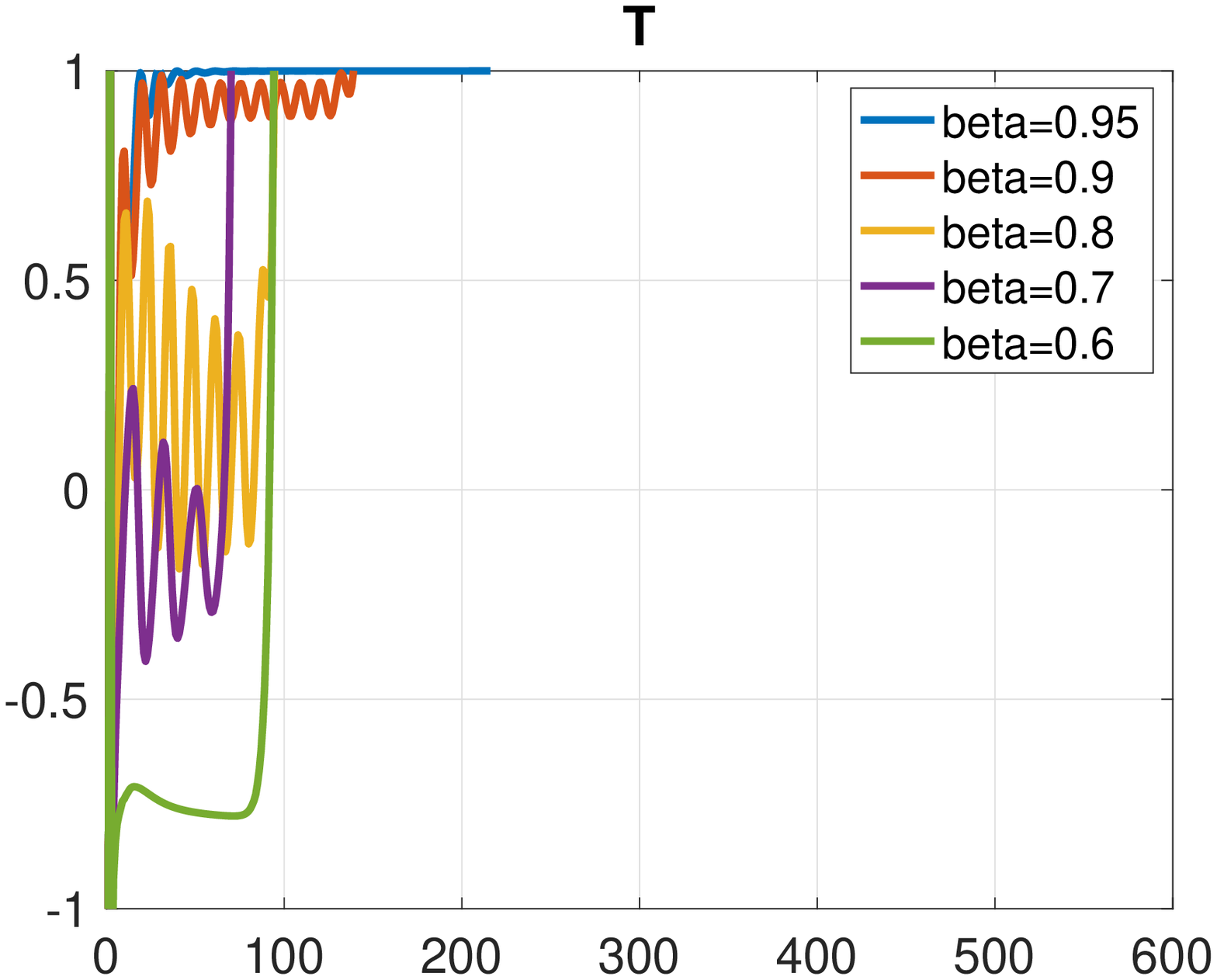}  
            \includegraphics[width=0.45\textwidth]{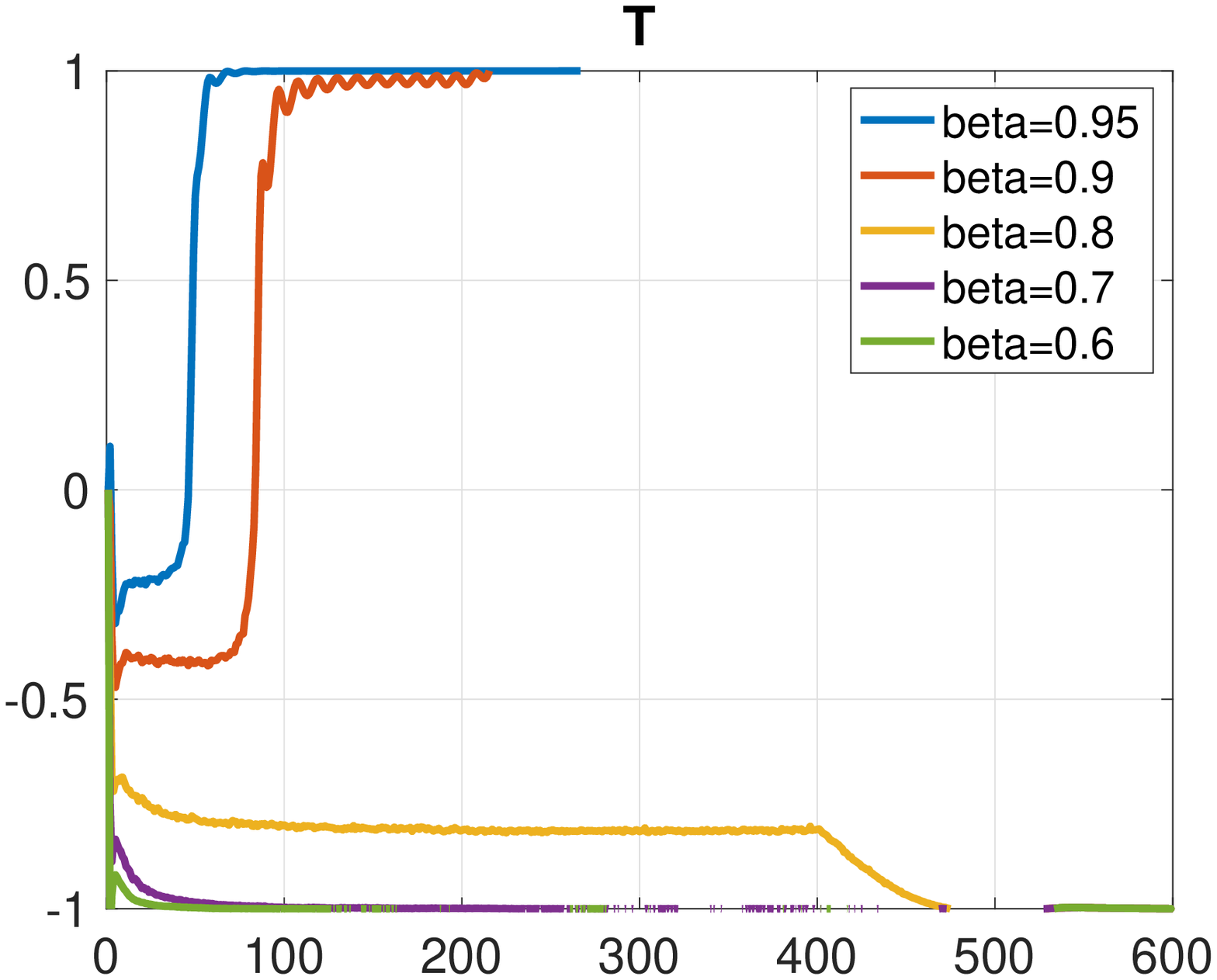}
                      \end{center}
   \caption{ Left and right columns show the residue metric and the derivative metric of RAAR in    
    case(a) and case (b), respectively.  }  \label{expRAARcurves2}
    \end{figure}

 \begin{figure}[htp]
 \begin{center}
            \includegraphics[width=0.45\textwidth]{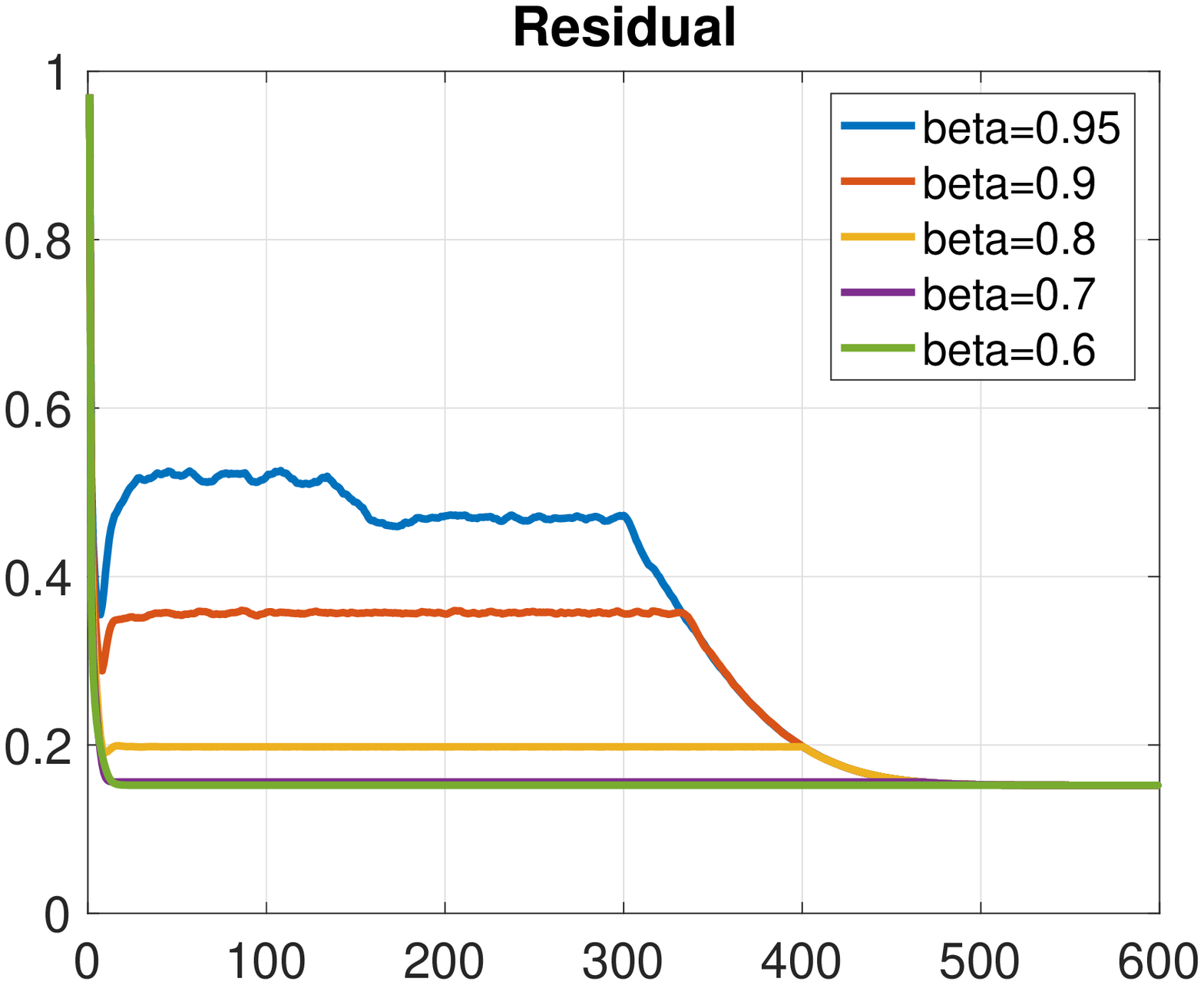} 
             \includegraphics[width=0.45\textwidth]{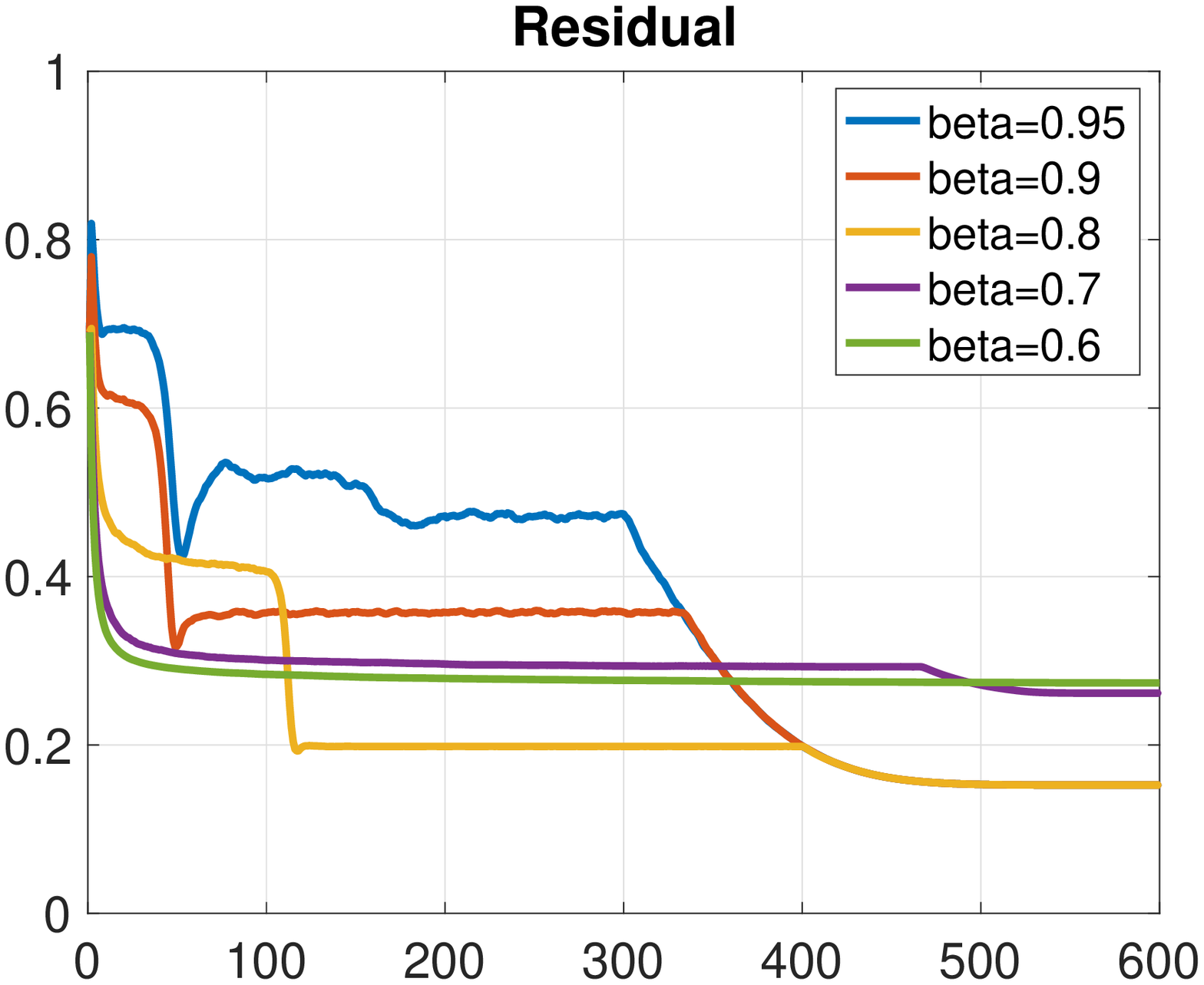}  \\
                        \includegraphics[width=0.45\textwidth]{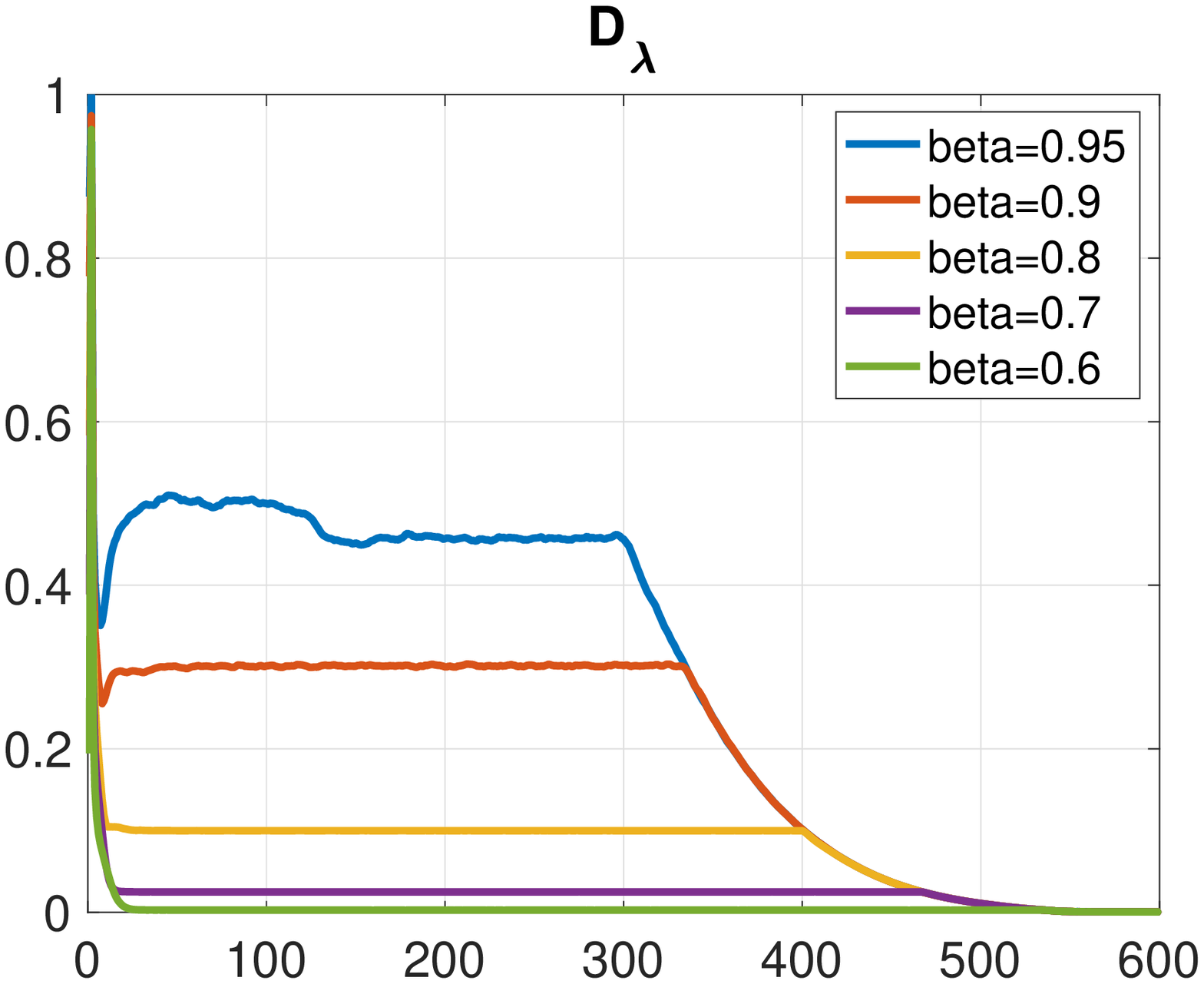}  
                        \includegraphics[width=0.45\textwidth]{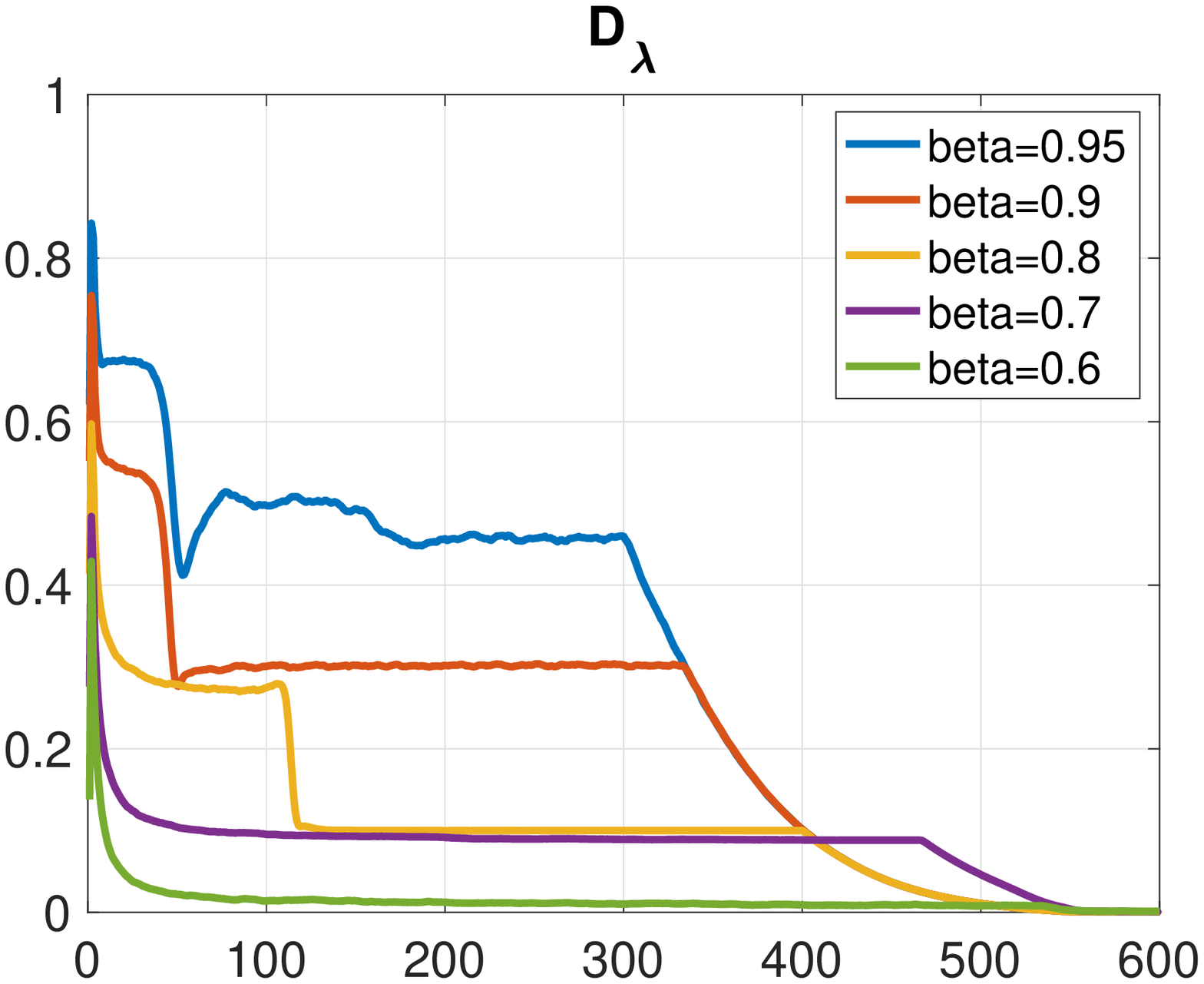} 
                         \end{center}
   \caption{ Left and right columns show the residue metric and the derivative metric of RAAR in    
    case(c) and case (d), respectively. 
  }  \label{expRAARcurves2new}
    \end{figure}
    \begin{figure}[htp]
 \begin{center}
                         \includegraphics[width=0.18\textwidth]{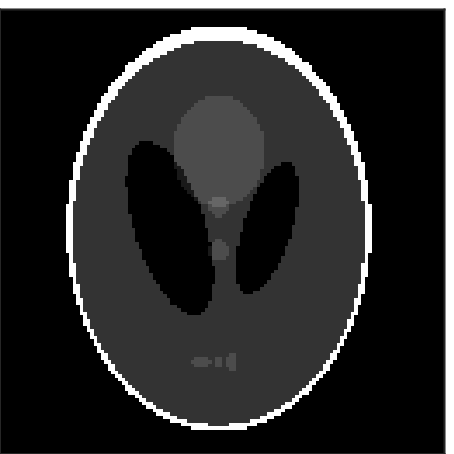}  
                        \includegraphics[width=0.18\textwidth]{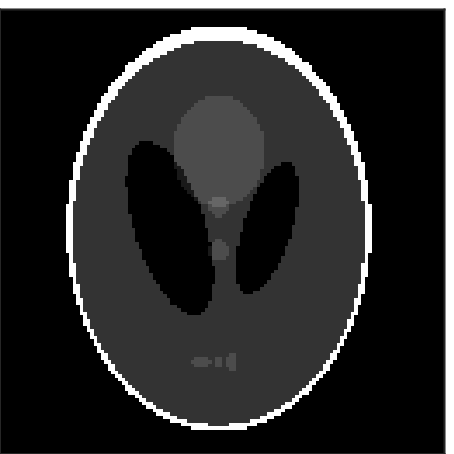}  
                        \includegraphics[width=0.18\textwidth]{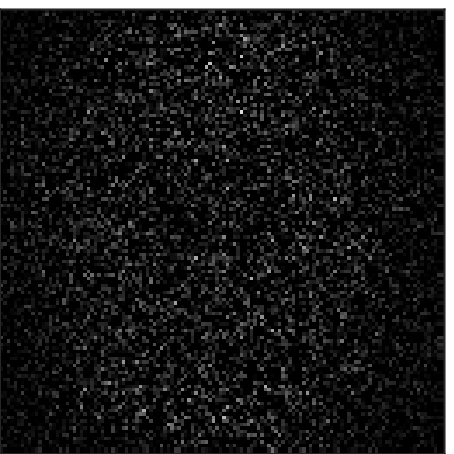}  
                        \includegraphics[width=0.18\textwidth]{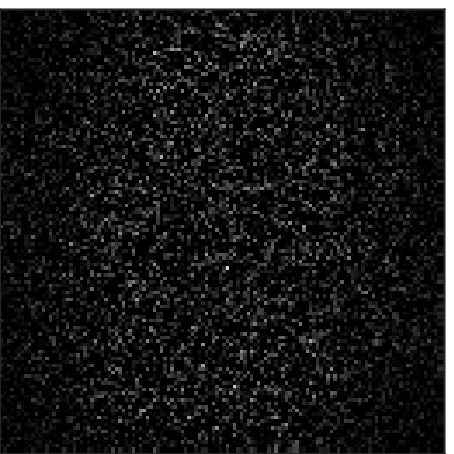}  
                        \includegraphics[width=0.18\textwidth]{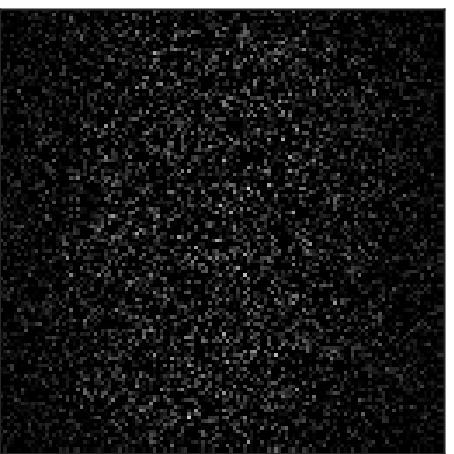}  
                     \end{center}
   \caption{  The  row from left to right show the reconstruction of 300 RAAR iterations in the case (b), corresponding to $\beta$-paths with $\beta=0.95$, $0.9$, $0.8$, $0.7$ and $0.6$, respectively.  }  \label{expRAARcurves40}
    \end{figure}

 \begin{figure}[htp]
 \begin{center}
                         \includegraphics[width=0.18\textwidth]{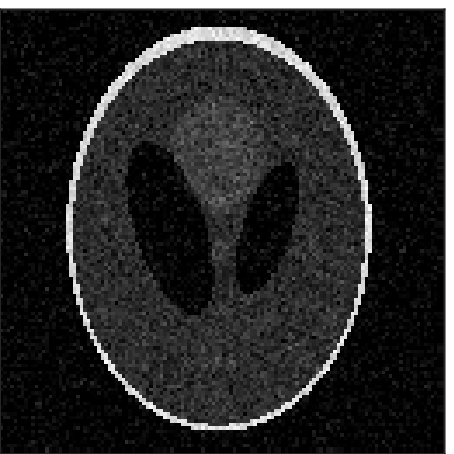}  
                        \includegraphics[width=0.18\textwidth]{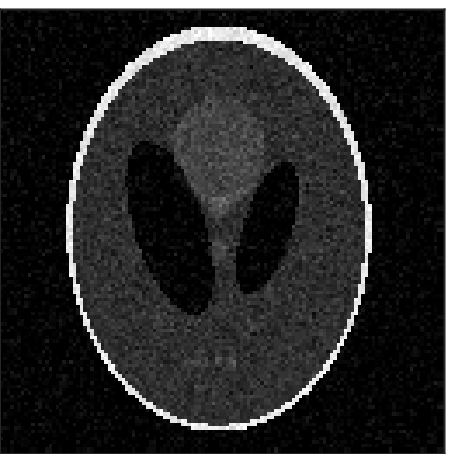}  
                        \includegraphics[width=0.18\textwidth]{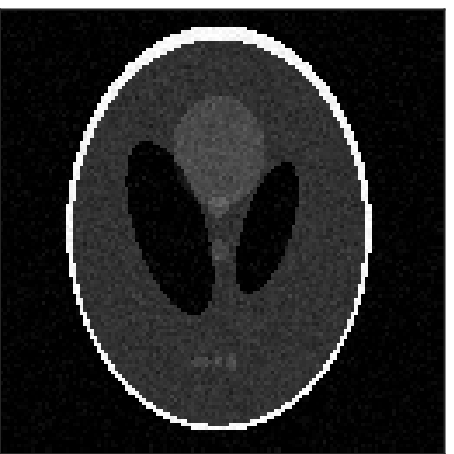}  
                        \includegraphics[width=0.18\textwidth]{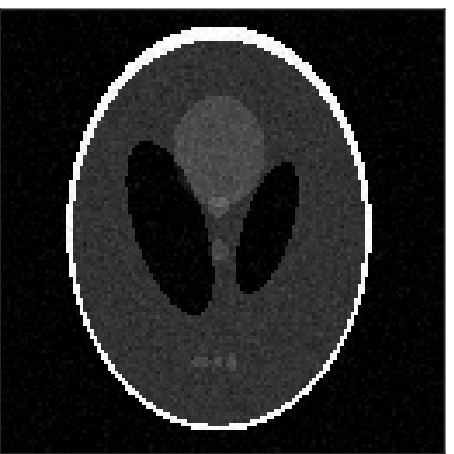}  
                        \includegraphics[width=0.18\textwidth]{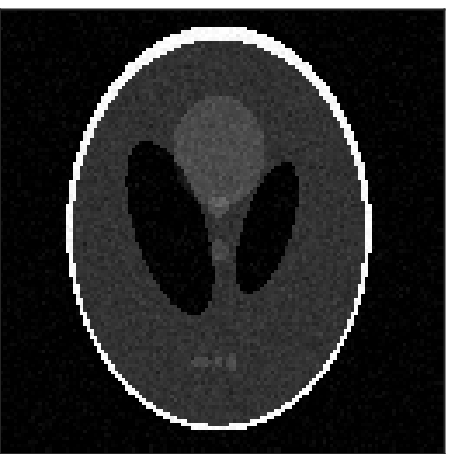}  \\
                        \includegraphics[width=0.18\textwidth]{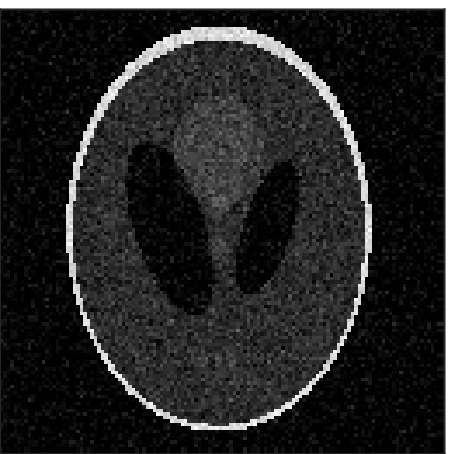}  
                        \includegraphics[width=0.18\textwidth]{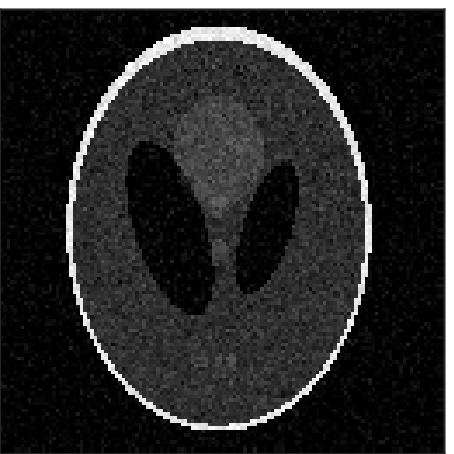}  
                        \includegraphics[width=0.18\textwidth]{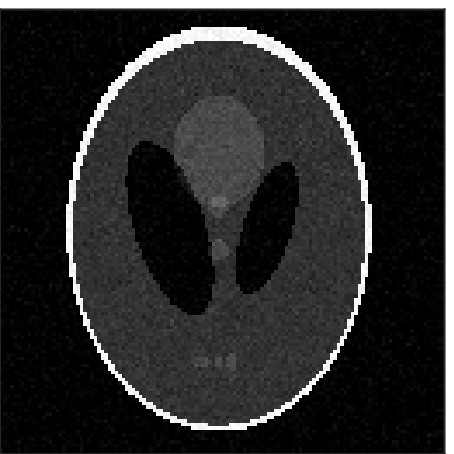}  
                        \includegraphics[width=0.18\textwidth]{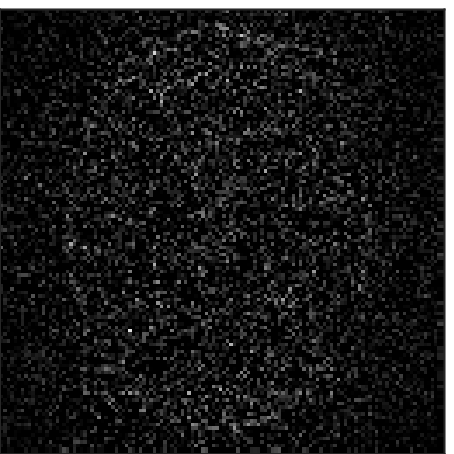}  
                        \includegraphics[width=0.18\textwidth]{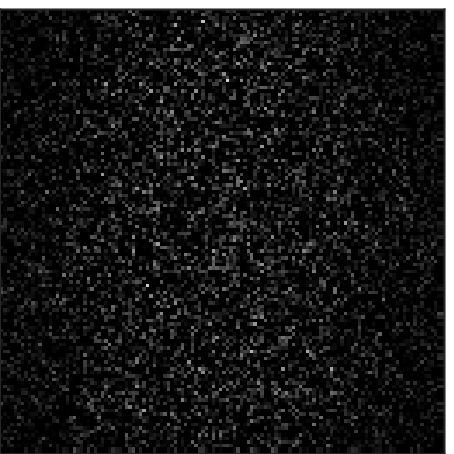}  
                      \end{center}
   \caption{ 
 The  columns  from left to right show the reconstruction of 300 RAAR iterations in the case (c,d), corresponding to $\beta$-paths with $\beta=0.95$, $0.9$, $0.8$, $0.7$ and $0.6$, respectively. The top row is the case (c) and the bottom row is the case (d). }  \label{expRAARcurves4}
    \end{figure}
 }

\subsection{Conclusion and outlook}
In this paper, we examine the RAAR convergence 
 from a viewpoint of  local saddles of a concave-non-convex max-min  problem.  We show that the global solution is a strictly local minimizer in oversampled coded diffraction patterns, which ensures  
 the existence of local saddles. Convergence to each local saddle of the RAAR  Lagrangian function  requires a sufficient large penalty parameter,  which  explains  the avoidance of undesired local solutions under RAAR with a moderate  penalty parameter. 

{ 
 ADMM is a popular algorithm in handling various constraints.  The current  paper does not introduce    any further assumption on  unknown objects,  except for the condition $x\in \IC^n$ in (\ref{main_P}). Stable recovery from incomplete measurements  is actually possible, provided that  additional assumptions of unknown objects are used.  For instance, 
when   unknown objects can be characterized by  piecewise-smooth functions with small total variation seminorm,
 the recovery can be obtained  from incomplete Fourier measurements with the aid of  total variation regularization~\cite{Chang2016}.  Another interesting  work 
~\cite{Eldar2014}  demonstrates   the number of measurements ensuring  stable recovery of a sparse object  under independent  measurement vectors.  From the above perspective,  one  interesting  future work is the
   saddle analysis of ADMM associated with  these additional object assumptions.
}
 
 \subsection{Acknowledgements}
 The author would like to thank Albert Fannjiang for  helpful discussions.
\bibliographystyle{unsrt} 

 \bibliography{RAARnew2}
\end{document}